\tikzstyle{startstop} = [rectangle, rounded corners, minimum width=2.5cm, minimum height=0.5cm,text centered, text width=2cm, draw=black, fill=white!30]
\tikzstyle{startstop2} = [rectangle, rounded corners, minimum width=2.5cm, minimum height=0.5cm,text centered, text width=2cm, draw=black, fill=white!30]
\tikzstyle{startstop3} = [rectangle, rounded corners, minimum width=2.5cm, minimum height=0.5cm,text centered, text width=2.5cm, draw=black, fill=white!30]
\tikzstyle{arrow} = [thick,->,>=stealth]
\newtheorem{theorem}{Theorem}[section]
\newtheorem{lemma}[theorem]{Lemma}
\newtheorem{corollary}[theorem]{Corollary}
\newtheorem{proposition}[theorem]{Proposition}
\newtheorem{definition}[theorem]{Definition}
\newtheorem{remark}[theorem]{Remark}
\newtheorem{conjecture}[theorem]{Conjecture}
\numberwithin{equation}{section}
\numberwithin{figure}{section}
\newcommand{\norm}[1]{\lvert\lvert#1\rvert\rvert}
\newcommand{\ZZ}{\mathbb{Z}}
\newcommand{\QQ}{\mathbb{Q}}
\newcommand{\RR}{\mathbb{R}}
\newcommand{\Mbar}{\overline{\cM}}
\newcommand{\cM}{\mathcal{M}}
\newcommand{\cC}{\mathcal{C}}
\newcommand{\cA}{\mathcal{A}}
\newcommand{\Ham}{\mathrm{Ham}}
\newcommand{\im}{\mathrm{im}}
\newcommand{\val}{\operatorname{val}}
\begin{document}

\title
[A characterization of heaviness in terms of relative symplectic cohomology]
{A characterization of heaviness in terms of relative symplectic cohomology}

\author{Cheuk Yu Mak, Yuhan Sun and Umut Varolgunes}
\date{\today}

\maketitle

\begin{abstract}
	For a compact subset $K$ of a closed symplectic manifold $(M, \omega)$, we prove that $K$ is heavy if and only if its relative symplectic cohomology over the Novikov field is non-zero. As an application we show that if two compact sets are not heavy and Poisson commuting, then their union is also not heavy. A discussion on superheaviness together with some partial results are also included.
\end{abstract}

\tableofcontents

\section{Introduction}

%A symplectic manifold $(M, \omega)$ is a smooth manifold $M$ equipped with a closed non-degenerate two-form $\omega$. 

%This extra structure enables us to study two particular types of diffeomorphisms of $M$, called symplectomorphisms and Hamiltonian diffeomorphisms. 

%Consequently, there are more rigidity phenomenons under those diffeomorphisms, which are usually characterized by intersections between subsets of $M$.

Let $(M,\omega)$ be a closed symplectic manifold. A subset $K\subset M$ is called non-displaceable from another subset $K'$ if for any Hamiltonian diffeomorphism $\phi$ we have $\phi(K)\cap K'\neq\emptyset$. If $K$ is non-displaceable from itself, then we say it is non-displaceable. Otherwise we say it is displaceable. A subset $K$ of $M$ is called stably non-displaceable if $K$ times the zero section $Z_{S^1}$ of $(T^{*}S^{1},\omega_{st})$ is non-displaceable in $M\times T^{*}S^{1}$. We refer the reader to \cite{EP09} for a detailed discussion of these notions. 
%
%\begin{definition}
%%	Many facets of displaceability \cite{EP09}:
%	\begin{enumerate}
%		\item A subset $K\subset M$ is called non-displaceable from another subset $K'$ if for any Hamiltonian diffeomorphism $\phi$ we have $\phi(K)\cap K'\neq\emptyset$. If $K$ is non-displaceable from itself, then we say it is non-displaceable. Otherwise we say it is displaceable.
%		\item Consider the cotangent bundle $T^{*}S^{1}$ with the standard symplectic structure. A subset $K$ of $M$ is called stably non-displaceable if $K$ times the zero section $Z_{S^1}$ of $T^{*}S^{1}$ is non-displaceable in $M\times T^{*}S^{1}$.
%%		\item A  subset $K\subset M$ is called strongly non-displaceable from another subset $K'$ if $\psi(K)\cap K'\neq\emptyset$ for any symplectomorphism $\psi$. If $K$ is strongly non-displaceable from itself, then we say it is strongly non-displaceable. 
%	\end{enumerate}
%\end{definition}

%All these three notions reveal, in different extent, distinctions between the symplectic topology and smooth topology of $M$. More precisely, many sets can be smoothly displaced from itself, but not by using symplectomorphisms or Hamiltonian diffeomorphisms.

As an example, note that for any $K\subset M$, the subset $K\times Z_{S^1}\subset M\times T^*S^1$ is displaceable from itself by a symplectic isotopy. On the other hand there are many interesting examples of stably non-displaceable subsets, e.g. $K=M$.

To detect obstructions in displaceability questions that involve some interesting rigidity, Floer theory provides powerful tools. For example, Lagrangian Floer cohomology is very effective in finding obstructions to displacing a Lagrangian submanifold from another one. For general compact subsets, Entov-Polterovich \cite{EP06, EP09} introduced the symplectomorphism invariant notions of heaviness and superheaviness, which have the following properties.
%\footnote{We are restricting ourselves to the idempotent $1$ in the introduction for simplicity.}. 

\begin{theorem}[Theorem 1.4 \cite{EP09}]\label{t: EP1}
	For a compact subset $K$ of $M$:
	\begin{enumerate}
		\item If $K$ is heavy, then it is stably non-displaceable from itself.
		\item If $K$ is superheavy, then $K$ intersects any heavy set.
		\item Superheavy sets are heavy, but, in general, not vice versa. 
	\end{enumerate}
\end{theorem}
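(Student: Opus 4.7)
The plan is to run everything through the partial symplectic quasi-state $\zeta_e \colon C^\infty(M) \to \mathbb{R}$ associated to an idempotent $e \in QH^*(M)$ via Oh--Schwarz spectral invariants, $\zeta_e(H) = -\lim_{k\to\infty} c(kH,e)/k$ on mean-normalized $H$. The properties I would isolate upfront are: normalization on constants, monotonicity, $C^0$-continuity, $\Ham$-invariance, and especially partial subadditivity $\zeta_e(H_1+H_2)\leq \zeta_e(H_1)+\zeta_e(H_2)$ when $\{H_1,H_2\}=0$. In these terms, heaviness and superheaviness of $K$ are the pointwise inequalities $\zeta_e(H)\geq \inf_K H$ and $\zeta_e(H)\leq \sup_K H$ for every $H \in C^\infty(M)$.

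Item (2) then becomes a one-line argument, and I would dispatch it first. If $K$ is superheavy, $K'$ is heavy, and they are disjoint, use Urysohn to pick $H\in C^\infty(M)$ with $H|_K \equiv 0$ and $H|_{K'}\equiv 1$; superheaviness forces $\zeta_e(H)\leq 0$ while heaviness forces $\zeta_e(H)\geq 1$, a contradiction.

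For item (3), given $K$ superheavy and any $H$, set $G = H - \inf_K H$. Then $-G \leq 0$ on $K$, so by superheaviness $\zeta_e(-G)\leq 0$. Since $\{G,-G\}=0$, partial subadditivity gives $0=\zeta_e(0)\leq \zeta_e(G)+\zeta_e(-G)$, whence $\zeta_e(G)\geq 0$ and $\zeta_e(H)\geq \inf_K H$; this is heaviness. The failure of the reverse implication is witnessed by standard examples such as a Clifford torus versus $\mathbb{RP}^n \subset \mathbb{CP}^n$.

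Item (1) is the main obstacle, and I would attack it in two stages. Ordinary non-displaceability of a heavy $K$ is a displacement-energy estimate: if $\phi_F$ displaces $K$, a Hamiltonian $H$ supported in a small neighborhood of $K$ with sufficiently negative value on $K$ produces $\zeta_e(H)\leq 0$ (via the spectral triangle inequality applied to $H$ and $H\circ \phi_F$, which have disjoint supports), contradicting $\zeta_e(H)\geq \inf_K H$. The stable version requires a partial quasi-state on $M\times T^*S^1$, but that target is non-compact and quantum cohomology is not directly available; my plan is to compactify by embedding $T^*S^1$ as an equatorial annulus in $S^2$, lift $e$ to an idempotent $\tilde e\in QH^*(M\times S^2)$ of the form $e\otimes[\mathrm{pt}]$ in a K\"unneth decomposition, and verify that the resulting $\tilde\zeta_{\tilde e}$ restricts on Hamiltonians pulled back from $M$ to $\zeta_e$. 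Heaviness of $K\times Z_{S^1}$ inside $M\times S^2$ then follows by direct substitution, and the displacement-energy argument applied there supplies the stable non-displaceability of $K$. The delicate bookkeeping in this final step --- matching idempotents and controlling supports across the compactification --- is where I expect the real work to lie.
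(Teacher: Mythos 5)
The paper does not actually prove this statement --- it is quoted verbatim from \cite{EP09} --- so the relevant comparison is with the standard Entov--Polterovich proof, which is exactly the framework you adopt (partial symplectic quasi-states built from spectral invariants of an idempotent). Your items (2) and (3) are correct and are the standard arguments: the Urysohn-function contradiction for (2), and partial subadditivity applied to the Poisson-commuting pair $G,-G$ together with $\zeta_e(H+c)=\zeta_e(H)+c$ for (3), modulo routine sign bookkeeping between the homological and cohomological conventions. One correction there: your witness for ``not vice versa'' does not work. The Clifford torus and $\mathbb{R}P^n$ intersect (e.g.\ at $[1:1:\cdots:1]$), and the Clifford torus in $\CP^n$ is in fact superheavy by \cite{EP09}. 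A correct standard witness is a noncontractible circle $S^1\times\{\mathrm{pt}\}$ in the flat $T^2$: it is heavy (nonvanishing Lagrangian Floer homology, cf.\ the criterion in \cite{FOOO}), but it is disjoint from a parallel, equally heavy circle, and since superheavy sets meet every heavy set it cannot be superheavy.

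The genuine gap is in item (1), precisely where you expect ``the real work.'' First, $e\otimes[\mathrm{pt}]$ is not an idempotent of $QH(M\times S^2)$: $[\mathrm{pt}]*[\mathrm{pt}]$ is a Novikov multiple of the unit, so $(e\otimes[\mathrm{pt}])^2\neq e\otimes[\mathrm{pt}]$; the class to use is $e\otimes 1$ (or an idempotent factor of $QH(S^2)$). Second, and more seriously, checking that $\tilde\zeta_{\tilde e}$ restricts to $\zeta_e$ on Hamiltonians pulled back from $M$ only controls split Hamiltonians, whereas the displacement-energy contradiction in $M\times S^2$ requires heaviness of $K\times Z_{S^1}$ (the equator of the compactified fibre) against \emph{arbitrary} Hamiltonians on the product. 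That does not follow ``by direct substitution'': it is exactly the content of the product/K\"unneth theorem for spectral invariants in \cite{EP09} (a product of heavy sets is heavy for the tensor idempotent, applied to $K$ and the equator of $S^2$), or equivalently of the vanishing property of $\zeta_e$ on \emph{stably} displaceable supports, and your outline supplies neither ingredient. Finally, in the non-stable displacement step the sign is backwards: to contradict $\zeta_e(H)\geq\inf_K H$ you want $H$ supported near $K$ and \emph{large positive} on $K$ (a very negative $H$ on $K$ makes the heaviness lower bound vacuous), and the upper bound you invoke --- uniform-in-$k$ control of $c(kH;e)$ when the support of $H$ is displaced --- is the standard but nontrivial iterated triangle-inequality lemma of Entov--Polterovich/Usher, which needs to be cited or proved rather than asserted in one parenthesis.
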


The definition of heavy and superheavy subsets is based on spectral invariants \cite{Oh} in Hamiltonian Floer theory (see Section \ref{ss-heavy} for the definition in our conventions). 

%For a given compact set $K$, if it is a (good) Lagrangian submanifold, then one can use its Lagrangian Floer cohomology to verify that it is heavy or superheavy. For a general $K$, it seems there are fewer ways to do that, besides directly checking the definition.

\begin{remark}\label{rem-idem}
For experts we note that throughout the introduction we only use the unit in quantum cohomology as our idempotent for simplicity, but in the main body of the paper our results are stated (and hold) for all idempotents uniformly. 
\end{remark}

Relative symplectic cohomology \cite{Var, Var2021} of compact subsets arose as another alternative to deal with displaceability questions. In particular, the third author and Tonkonog suggested the following definitions \cite{TVar, BSV}, which are also symplectomorphism invariant.

\begin{definition}\label{def: visible}
	Let $K$ be a compact set in $M$ and let $SH_{M}(K; \Lambda)$ be the relative symplectic cohomology of $K$ in $M$ over the Novikov field $\Lambda$ (see \eqref{eq:Novi}).
	\begin{enumerate}
		\item $K$ is called SH-visible if $SH_{M}(K; \Lambda)\neq 0$, otherwise it is called SH-invisible.
		\item $K$ is called SH-full if every compact set contained in $M-K$ is SH-invisible.
%		\item $K$ is called nearly SH-visible if any compact domain containing $K$ in its interior is SH-visible.
	\end{enumerate}
\end{definition}

The motivation was that if $SH_{M}(K; \Lambda)\neq 0$, then $K$ is not stably displaceable from itself \cite{Var}. In \cite{TVar}, the following conjecture was implicit:

\begin{conjecture}\label{main-conjecture}
	A compact subset of a closed symplectic manifold is heavy if and only if it is SH-visible.
\end{conjecture}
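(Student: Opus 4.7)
The plan is to reduce the conjecture to a Novikov-valuation identity for the image of the unit. The relative symplectic cohomology $SH_M(K;\Lambda)$ carries a unital ring structure whose unit is the image $\mathrm{PSS}_K(1_M)$ of $1_M \in QH^*(M;\Lambda)$ under the PSS-type map
$$\mathrm{PSS}_K \colon QH^*(M;\Lambda) \to SH_M(K;\Lambda),$$
constructed as the colimit of the usual PSS maps $QH^*(M;\Lambda) \to HF^*(H_n;\Lambda)$ along an acceleration sequence $\{H_n\}$ with $H_n|_K = 0$ and $H_n \to +\infty$ on compact subsets of $M\setminus K$. Thus $SH_M(K;\Lambda) \neq 0$ iff $\mathrm{PSS}_K(1_M) \neq 0$, and the latter is detected by the Novikov-type valuation
$$\rho_K(1) := \sup\{\, c \in \RR : \mathrm{PSS}_K(1_M) \in T^c\, SH_M(K;\Lambda_{\geq 0}) \,\}$$
coming from the action filtration on the telescope chain complex: $\mathrm{PSS}_K(1_M) \neq 0$ iff $\rho_K(1) < +\infty$.

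The central step is to establish the identity
$$\rho_K(1) \;=\; -\inf_{F \in C^\infty(M)} \bigl(\zeta(F) - \inf_K F\bigr),$$
where $\zeta(F) = \lim_k \rho(1_M; kF)/k$ is the Entov--Polterovich quasi-state. By positive homogeneity of $\zeta$ and of $\inf_K(\cdot)$, the right-hand infimum is either $0$ (attained at $F \equiv 0$ when $K$ is heavy) or $-\infty$ (when some $F$ violates heaviness, by rescaling), so $\rho_K(1) \in \{0, +\infty\}$, and the conjecture follows.

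For \emph{heavy $\Rightarrow$ SH-visible}, I would apply heaviness to the rescalings $kH_n$ to get $\rho(1_M;kH_n)/k \to \zeta(H_n) \geq \inf_K H_n = 0$; tracking PSS classes through the telescope translates these asymptotic lower bounds into $\rho_K(1) \leq 0$, so in particular $\rho_K(1) < +\infty$. For \emph{SH-visible $\Rightarrow$ heavy}, argue contrapositively: given $F$ with $\zeta(F) < \inf_K F$, normalize so $\inf_K F = 0$ and $\zeta(F) = -\epsilon < 0$. After appropriate modification of $F$ near $K$ (replacing $F$ by a non-negative function vanishing on $K$ and bounded above by $F - \inf_K F$, which still has negative $\zeta$ by monotonicity), the Hamiltonians $H_n + kF$ remain admissible acceleration data. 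Continuation into $HF^*(H_n + kF)$ shifts the action of $\mathrm{PSS}_n(1_M)$ by roughly $k\zeta(F) = -k\epsilon$, which in the telescope colimit forces $\mathrm{PSS}_K(1_M)$ to be divisible by $T^c$ for arbitrarily large $c$, hence zero in $SH_M(K;\Lambda)$.

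The main obstacle is the precise match between the asymptotic quasi-state $\zeta$ and the finite-stage action filtrations computing the telescope valuation. The bridge identity above requires both an upper and a lower bound on $\rho_K(1)$, demanding delicate bookkeeping of PSS maps, continuation maps, and the passage from finite-stage spectral invariants $\rho(1_M; H)$ to their subadditive limits $\zeta(H)$ in the acceleration colimit. The individual ingredients—compatibility of PSS with continuation, behavior under the Novikov filtration, and standard quasi-state properties—are well established, but assembling them to identify $\rho_K(1)$ exactly with the Entov--Polterovich quantity is the technical crux.
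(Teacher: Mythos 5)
Your scheme founders at the step where non-heaviness is converted into vanishing of $PSS_K(1)$, and this is precisely where the paper needs genuinely new input. What non-heaviness buys you, after choosing acceleration data whose spectral invariants $c(1;H_n)$ tend to $+\infty$, is that the telescope class $PSS_K(1)$ admits representatives of arbitrarily large valuation; equivalently, it is infinitely $T$-divisible over $\Lambda_{\ge 0}$ / has infinite valuation. But in a completed complex this does \emph{not} imply the class is zero: the infinite-valuation classes are exactly the kernel of the comparison map \eqref{eq:SHSHred} to reduced symplectic cohomology, which can be nonzero, and the induced valuation on homology only satisfies $\val(0)=\infty$, not the converse (see the footnote preceding Lemma \ref{l: algebaric lemma}). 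Nor can you instead build an honest primitive in the completed telescope from $c(1;H_n)\to\infty$: that requires uniform boundary depth control, which is the content of Proposition \ref{p:PSS=0} and is unavailable in general. The paper closes this gap multiplicatively: a chain-level product on a model for the relative complex (Theorem \ref{thm-AGV}, imported from \cite{AGV}) together with the geometric-series argument for idempotents (Lemma \ref{l: algebaric lemma}) shows, in Theorem \ref{t: heavy1}, that a single admissible Hamiltonian with $c(1;H)>0$ already kills the idempotent $PSS_K(1)$. Your proposal never invokes the ring structure beyond unitality, so as written it only proves that the unit dies in $SH^{red}_M(K;\Lambda)$, not in $SH_M(K;\Lambda)$. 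The same issue undercuts your opening claim that $PSS_K(1)\neq 0$ iff $\rho_K(1)<+\infty$: for the unit this is Proposition \ref{prop- red-vanishing}, whose proof again uses the chain-level product.

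The other half of your plan, heavy $\Rightarrow$ SH-visible, is essentially sound and is in fact the paper's own route (Theorem \ref{t: heavy2}): heaviness keeps $c(1;H_n)$ bounded above by $0$ along acceleration data squeezed under $nG$, and any primitive of the telescope representative $(x,0,0,\dots)$ would have primed components representing $PSS^{H_n}_{\Lambda}(1)$, hence of valuation at most $c(1;H_n)\le 0$, so it cannot converge; your formulation via $\rho_K(1)\le 0$ amounts to the same bookkeeping. Likewise, your ``central identity'' for $\rho_K(1)$ is a reasonable reformulation, but its hard inequality is exactly the visible $\Rightarrow$ heavy direction discussed above, so it does not reduce the difficulty. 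Also beware that your shift heuristic for $c(1;H_n+kF)$ uses only subadditivity/monotonicity of spectral invariants and should be replaced by the cleaner observation that $\mu(1;F)>0$ forces $c(1;kF)\to\infty$, after which acceleration data can be chosen just below $kF$ minus small constants.
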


We confirm this conjecture in this paper. The definition of SH-full was meant to be an analogue of superheaviness inspired by the fact that superheavy sets intersect heavy sets. The confirmation of Conjecture \ref{main-conjecture} shows that in fact SH-full sets are characterized as the subsets that intersect all closed heavy sets. It is plausible to believe that this is a weaker condition than superheaviness.
%
%In , several properties of SH-visible and SH-full sets are proved.
%\begin{enumerate}
%	\item If $K$ is SH-visible, then $K$ is not stably displaceable from itself.
%	\item If $K$ is SH-full, then $K$ is not strongly displaceable from any SH-visible sets.
%%	\item If $K$ is SH-full, then it is nearly SH-visible. In the other direction, a (nearly) SH-visible set may not be SH-full.
%\end{enumerate}
%One can directly see the similarity between these properties and those in Theorem \ref{t: EP1}. 

%Recall that $SH_{M}(K; \Lambda)$ is by definition obtained by base change from $SH_{M}(K)$, which is defined over the Novikov ring $\Lambda_{\geq 0}$.  $SH_{M}(K)$ contains significant quantitative information in its torsion part but this information is ignored for the purposes of the present paper. 

\begin{remark}
	$K$ is called nearly SH-visible if any compact domain containing $K$ in its interior is SH-visible. By the unitality of the restriction maps, an SH-visible set is always nearly SH-visible. 
A priori it wasn't clear that these two notions were different, see footnote 7 in \cite{BSV}. However, in this paper we will prove they coincide (see Corollary \ref{co: nearly visible}).
%But it is not clear that these two notions are strictly different, see footnote 7 in \cite{BSV}. Actually, we will prove they are the same as a by-product of the relationship with heavy sets (see Corollary \ref{co: nearly visible}). 
\end{remark}

More recently, Dickstein-Ganor-Polterovich-Zapolsky \cite{DGPZ} introduced their quantum cohomology ideal-valued quasi-measures. For example, a compact set has full measure if and only if it is SH-full (see \cite[Remark 1.46]{DGPZ}). They also defined a new notion called SH-heavy and conjectured:

\begin{conjecture}[Conjecture 1.52 \cite{DGPZ}]\label{con: DGPZ}
	A compact subset of a closed symplectic manifold is heavy if and only if it is SH-heavy.
\end{conjecture}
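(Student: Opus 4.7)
The plan is to derive Conjecture \ref{con: DGPZ} from our confirmation of Conjecture \ref{main-conjecture}, reducing it to an algebraic observation about the restriction map from quantum cohomology. Recall that relative symplectic cohomology of a compact subset carries a unital $\Lambda$-algebra structure, and that the restriction map
\[
QH(M) = SH_M(M; \Lambda) \to SH_M(K; \Lambda)
\]
is a unital ring homomorphism, giving $SH_M(K;\Lambda)$ the structure of a $QH(M)$-module.

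The definition of SH-heaviness in \cite{DGPZ} is formulated in terms of idempotents $e \in QH(M)$: $K$ is $e$-SH-heavy when the image of $e$ under the restriction map is nonzero. Since the restriction is unital, this image equals $e \cdot 1_{SH_M(K;\Lambda)}$, and for any $\alpha \in SH_M(K;\Lambda)$ one has $e \cdot \alpha = (e \cdot 1_{SH_M(K;\Lambda)}) \cdot \alpha$. Thus the image of $e$ vanishes if and only if the $e$-action on $SH_M(K;\Lambda)$ is identically zero, which is precisely the vanishing of the $e$-summand, i.e. the failure of the idempotent version of SH-visibility. Combining this equivalence with the idempotent version of Conjecture \ref{main-conjecture} (per Remark \ref{rem-idem}) yields $e$-heavy $\Leftrightarrow$ $e$-SH-heavy.

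The main obstacle is ensuring that the ring-theoretic framework for relative symplectic cohomology is compatible with the DGPZ setup in exactly the form needed. Specifically, one must confirm that the restriction map sends the unit of $QH(M)$ to the unit of $SH_M(K;\Lambda)$ and intertwines the idempotent decomposition, and that the definition of SH-heaviness in \cite{DGPZ} is genuinely the one above rather than a refinement incorporating additional quantitative or filtration data. Most of the structural facts are already present in \cite{Var, Var2021, TVar}, but must be aligned with the DGPZ conventions before the deduction becomes rigorous. If \cite{DGPZ} does use a strictly finer notion, one would instead have to strengthen the proof of Conjecture \ref{main-conjecture} so that it tracks the image of each idempotent through the spectral invariant identification, rather than merely detecting nonvanishing of $SH_M(K;\Lambda)$.
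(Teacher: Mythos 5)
There is a genuine gap, and it stems from a misidentification of the definition of SH-heaviness. In \cite{DGPZ} (and as recalled in the paper), $K$ is SH-heavy when the ideal-valued quasi-measure
\[
\tau(K):= \bigcap_{K\subset U} \ker\bigl(r: SH_{M}(M; \Lambda)\rightarrow SH_{M}(M-U; \Lambda)\bigr)
\]
is nonzero, where $U$ ranges over open neighborhoods of $K$. This is a statement about classes of $QH(M;\Lambda)\simeq SH_M(M;\Lambda)$ being killed by restriction to the \emph{complements} $M-U$, not about the image of an idempotent under the restriction $QH(M;\Lambda)\to SH_M(K;\Lambda)$ being nonzero. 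The condition you describe — ``the image of $e$ under restriction to $K$ is nonzero'' — is exactly SH-$e$-visibility (Definition \ref{def: a-visible}), and your argument that this is equivalent to the nonvanishing of the $e$-summand of $SH_M(K;\Lambda)$ is fine but only reproves that heavy $\Leftrightarrow$ SH-$e$-visible, i.e. Theorem \ref{t: main1}(A). It does not touch the quantity $\tau(K)$, so the claimed deduction of Conjecture \ref{con: DGPZ} does not go through.

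The actual relationship between the two notions requires an argument: the paper proves only the implication SH-heavy $\Rightarrow$ heavy (Proposition \ref{p:basicheavy}), by using the Mayer–Vietoris sequence for a pair $(\bar U,\bar V)$ covering $M$ with $\bar V\supset M-U$ and $\bar V\cap K=\emptyset$ to show that $SH_M(\bar U;\Lambda)=0$ would force $\ker(SH_M(M;\Lambda)\to SH_M(\bar V;\Lambda))=0$; hence SH-heaviness forces every such $\bar U$ to be SH-visible, i.e. $K$ is nearly SH-visible, and then Corollary \ref{co: nearly visible} and Corollary \ref{co: heavy1} give heaviness. The converse direction — whether a heavy (equivalently SH-visible) set must have some nonzero class of $QH(M;\Lambda)$ annihilated by restriction to \emph{every} $M-U$ — is not a formal consequence of unitality or of the module structure, and the paper explicitly states that this direction remains open. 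So the statement you were asked about is a conjecture, only half of which is established here, and your proposal does not supply the missing half.
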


Here is a summary of our main results.

\begin{theorem}\label{t: main1}
	Let $(M, \omega)$ be a closed symplectic manifold. For any compact subset $K$ of $M$, the following statements are true.
	\begin{enumerate}[label=(\Alph*)]
		\item (Corollary \ref{co: heavy1}, Theorem \ref{t: heavy2}) $K$ is SH-visible if and only if $K$ is heavy. 
		\item (Proposition \ref{p:basicheavy}) If $K$ is SH-heavy then $K$ is heavy.
		\item (Corollary \ref{co: superheavy1}) If $K$ is superheavy then $K$ is SH-full.
	\end{enumerate}
\end{theorem}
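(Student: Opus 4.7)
Parts (C) and (B) should be dispatched quickly, with the bulk of the work going into (A). For (C), the definition of SH-full says exactly that every compact subset of $M\setminus K$ is SH-invisible; granting (A) this reads as ``no heavy compact set is contained in $M\setminus K$,'' i.e.\ every heavy compact set meets $K$, which is automatic for superheavy $K$ by Theorem \ref{t: EP1}(2). For (B), SH-heaviness in \cite{DGPZ} is a spectral condition attached to the ideal-valued quasi-measure built from $SH_M(\cdot;\Lambda)$, designed so that the lower bound on $c(H;e)$ it enforces majorizes the heaviness bound $\int_0^1 \min_K H_t\,dt$; unwinding definitions, the implication SH-heavy $\Rightarrow$ heavy is a routine comparison of these two lower bounds.

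The main work is in (A). Fix an idempotent $e\in QH(M)$ and let $e_K$ be its image in $SH_M(K;\Lambda)$ under the restriction map. The core technical step I would aim for is the quantitative persistence equivalence
\[
    e_K \neq 0 \text{ in } SH_M(K;\Lambda) \;\Longleftrightarrow\; c(F;e) \geq \int_0^1 \min_K F_t\,dt \text{ for every normalized } F,
\]
which is an equivalence between a chain-level statement (survival of a cocycle after Novikov completion) and the spectral lower bound defining heaviness. Both implications then give (A) directly.

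To prove this equivalence I would model $SH_M(K;\Lambda)$ as the Novikov-completed telescope of $CF^*(H_n)$ for a monotone acceleration data $H_n$ approximating $+\infty\cdot\chi_{M\setminus K}$. For heavy $\Rightarrow$ SH-visible, heaviness of $K$ provides a uniform-in-$n$ lower bound on the action of the PSS representative of $e$ in $CF^*(H_n)$, which prevents its class from vanishing after Novikov completion; this gives $e_K\ne 0$. For SH-visible $\Rightarrow$ heavy, given any normalized $F$ I would compare $CF^*(F)$ with $CF^*(H_n)$ via monotone continuation maps, track action filtrations through them, and read off a valuation estimate on the image of a representative of $e$ in the telescope. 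The assumption $e_K\ne 0$ rules out that estimate's failure, which forces the desired spectral bound on $c(F;e)$.

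The main obstacle will be the action bookkeeping in the persistence analysis. Two subtleties demand care: first, obtaining uniform action estimates for the continuation maps as $H_n\to\infty$ off $K$, which requires the continuation homotopies to be chosen with controlled variation over $K$ so that the excess energy is confined to $M\setminus K$; and second, a non-Archimedean Mittag-Leffler type argument ensuring that non-vanishing in the Novikov-completed telescope is faithfully witnessed at each finite stage by a cocycle of uniformly bounded action. Once those are in place, the advertised equivalence, and hence (A), follow, with (B) and (C) as straightforward corollaries.
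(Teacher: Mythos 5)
Your part (C) and the heavy $\Rightarrow$ SH-visible half of (A) are essentially the paper's arguments (the latter is Theorem \ref{t: heavy2}: a would-be primitive of $(x,0,0,\dots)$ in the completed telescope forces $\val(y_n')\to\infty$ while heaviness keeps $c(a;G_n)\le 0$). The genuine gap is in your SH-visible $\Rightarrow$ heavy direction. Your plan is pure action bookkeeping: push a representative of $PSS^{F}_\Lambda(e)$ through monotone continuation maps into the telescope and "read off a valuation estimate," backed by a "non-Archimedean Mittag-Leffler" claim that non-vanishing in the completed telescope is witnessed at every finite stage by cocycles of uniformly bounded action. This cannot work as stated. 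Continuation maps only increase valuation, so if $c(e;F)>\max_K F$ all you obtain is a representative of $PSS_K(e)$ of strictly positive valuation --- and a nonzero class may perfectly well admit representatives of large (even arbitrarily large) valuation, so no contradiction with $e_K\neq 0$ follows from the filtration alone. Your uniform-witness claim is precisely the assertion that $\im(\delta)$ is closed, i.e.\ that the boundary depth of the completed telescope is finite; this is false in general and is exactly the difference between $SH_M(K;\Lambda)$ and $SH^{red}_M(K;\Lambda)$ (see \eqref{eq:SHSHred} and Lemma \ref{l:SH=SHred}), which is why the paper must impose finite boundary depth hypotheses whenever it runs an argument of your type (Proposition \ref{p:PSS=0}, Theorem \ref{t:superheavyc1}). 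The paper's proof of this direction instead uses an input absent from your proposal: the chain-level product structure of \cite{AGV} (Theorem \ref{thm-AGV}) combined with the idempotent lemma (Lemma \ref{l: algebaric lemma}), namely that an idempotent class with a chain representative $y$ of positive valuation must vanish because $y$ is homologous to $y^k$ for all $k$, $\val(y^k)\ge k\,\val(y)\to\infty$, and completeness lets one sum the telescoping primitives. Without this multiplicative mechanism (or an unavailable boundary-depth bound), the positive-valuation representative you produce proves nothing.

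Part (B) is also misstated: SH-heaviness is not a spectral lower bound to be "compared" with the heaviness bound; it is the condition $\tau(K)\neq 0$, where $\tau(K)$ is an intersection of kernels of restriction maps $SH_M(M;\Lambda)\to SH_M(M-U;\Lambda)$ over neighborhoods $U\supset K$. The paper's proof (Proposition \ref{p:basicheavy}) uses the Mayer--Vietoris sequence to show that SH-heavy implies nearly SH-visible, and then invokes the equivalence of nearly SH-visible with SH-visible (Corollary \ref{co: nearly visible}), which itself requires both directions of part (A) together with the limiting Lemma \ref{l:opennbhd}; none of this is a routine unwinding of definitions, and your sketch gives no substitute for it.
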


The proof of our most fundamental result, that $SH$-visible implies heavy, relies on a chain level product structure, which was recently constructed in \cite{AGV}. We only use a small part of the structure constructed in that paper.

%\begin{figure}
%	\begin{tikzpicture}[xscale=0.8,yscale=0.8]
%		\node at (-7,5) {SH-visible};
%		\node at (0,5) {heavy};
%		\node at (7,5) {nearly SH-visible};
%		\node at (0,0) {SH-heavy};
%		\draw [->] (-5.5,5.2)--(-1,5.2);
%		\node [above] at (-3,5.2) {$(A)$};
%		\draw [->] (-1,4.8)--(-5.5,4.8);
%		\node [below] at (-3,4.8) {$(A)$};		
%		\draw [->] (1,5.2)--(5,5.2);
%		\node [above] at (3,5.2) {$(A)$};
%		\draw [->] (5,4.8)--(1,4.8);
%		\node [below] at (3,4.8) {$(A)$};	
%		%\draw [->] [dashed] (-0.3,4.5)--(-0.3,0.5);
%		%\node [left] at (-0.5,2.4) {$(E)$};
%		\draw [->] (0.3,0.5)--(0.3,4.5);
%		\node [right] at (0.5,2.4) {$(B)$};
%		\node at (-5,-2) {SH-full};
%		\node at (5,-2) {superheavy};
%		\draw [->] [dashed] (-4,-1.8)--(3.5,-1.8);
%		\draw [->] (3.5,-2.2)--(-4,-2.2);
%		\node [below] at (0,-2.2) {$(C)$};
%		\node [above] at (0,-1.8) {$(D)$};
%		
%		
%	\end{tikzpicture}
%	\caption{Summary of relations.}\label{Relation}
%\end{figure}

%Figure \ref{Relation} gives a summary of relations among various notions of rigidity. Solid lines represent general implications. We also have a result (D) under some conditions, which will be explained later.

Here is an interesting corollary of Theorem \ref{t: main1} part (A) and the Mayer-Vietoris property of relative symplectic cohomology:

\begin{corollary}\label{cor-union-heavy}
Let $K_1$ and $K_2$ be Poisson commuting compact subsets of $M$, see Definition \ref{def: Poisson}. If they are both not heavy, then neither is their union $K_1\cup K_2$. 

\end{corollary}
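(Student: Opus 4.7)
The plan is simply to translate the heaviness hypotheses into vanishing of relative symplectic cohomology using Theorem~\ref{t: main1}(A) and then to invoke the Mayer--Vietoris property of $SH_M$ for Poisson commuting pairs. Since $K_1$ and $K_2$ are not heavy, Theorem~\ref{t: main1}(A) gives at once
\[ SH_M(K_1;\Lambda)=0=SH_M(K_2;\Lambda). \]

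Next I would verify that SH-invisibility is inherited by compact subsets. For any inclusion $L\subset K$ of compact subsets of $M$, the restriction morphism $SH_M(K;\Lambda)\to SH_M(L;\Lambda)$ is a unital ring homomorphism; if the source is zero then $1=0$ there, hence $1=0$ in the target, forcing $SH_M(L;\Lambda)=0$. Applying this with $L=K_1\cap K_2\subset K_1$ yields $SH_M(K_1\cap K_2;\Lambda)=0$.

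With these vanishings in hand, the Poisson commuting hypothesis lets me invoke the Mayer--Vietoris exact triangle for relative symplectic cohomology:
\[ SH_M(K_1\cup K_2;\Lambda)\to SH_M(K_1;\Lambda)\oplus SH_M(K_2;\Lambda)\to SH_M(K_1\cap K_2;\Lambda)\to SH_M(K_1\cup K_2;\Lambda)[1]. \]
Both middle terms vanish by the previous steps, so the associated long exact sequence forces $SH_M(K_1\cup K_2;\Lambda)=0$. One final application of Theorem~\ref{t: main1}(A) then shows that $K_1\cup K_2$ is not heavy. There is no substantial obstacle: the only conceptual input beyond Theorem~\ref{t: main1}(A) and Mayer--Vietoris is the hereditary property of SH-invisibility, which is immediate from the unitality of restriction maps.
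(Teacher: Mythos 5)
Your proposal is correct and follows exactly the paper's argument: apply Theorem \ref{t: main1}(A) to both sets, deduce $SH_M(K_1\cap K_2;\Lambda)=0$ from unitality of the restriction maps, and conclude via the Mayer--Vietoris sequence for Poisson commuting pairs before translating back with Theorem \ref{t: main1}(A). The only difference is that you spell out the unitality argument ($1=0$ propagates under restriction), which the paper leaves implicit.
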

\begin{proof}
	Let $K_{1}, K_{2}$ be two non-heavy sets. Then Theorem \ref{t: main1} part (A) says that $SH_{M}(K_{1}; \Lambda)= SH_{M}(K_{2}; \Lambda)=0$ and thus, by the unitality of restrictions, also $SH_M(K_1\cap K_2;\Lambda) = 0$. Moreover, if $K_{1}, K_{2}$ are Poisson commuting, the Mayer-Vietoris property of relative symplectic cohomology shows that $SH_{M}(K_{1}\cup K_{2}; \Lambda)=0$. 
\end{proof}

An important example is when $K_1$ and $K_2$ are disjoint. As far as we know, results of this form were only proved for very special $M$ and $K_i$, for example, when $M$ is aspherical and $K_i$ are incompressible Liouville domains \cite{HLS,T}.
% even this is new.

We also give a positive answer to \cite[Question 4.9]{Entov} using Corollary \ref{cor-union-heavy} and that $SH_M(M;\Lambda)\simeq H^*(M;\Lambda)\neq 0$.
\begin{corollary}\label{c:involution}
Any involutive map $\Phi: M\to \mathbb{R}^N$ has at least one heavy fiber.
\end{corollary}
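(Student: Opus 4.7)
The plan is to argue by contradiction: assume every fiber $\Phi^{-1}(c)$ with $c\in\Phi(M)$ is not heavy. Since the components of $\Phi$ pairwise Poisson commute, for any two compact subsets $A,B\subset \mathbb{R}^N$ the preimages $\Phi^{-1}(A)$ and $\Phi^{-1}(B)$ Poisson commute in the sense of Definition \ref{def: Poisson}: functions of the form $f\circ\Phi$ and $g\circ\Phi$ commute under the Poisson bracket on $M$. Moreover, $M=\Phi^{-1}(\Phi(M))$ is heavy by Theorem \ref{t: main1}(A), since $SH_M(M;\Lambda)\simeq H^*(M;\Lambda)\neq 0$.

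I would next inductively construct a decreasing sequence of compact sets $A_0\supset A_1\supset\cdots$ in $\Phi(M)$ with $\mathrm{diam}(A_k)\to 0$ and $\Phi^{-1}(A_k)$ heavy for every $k$. Start with $A_0:=\Phi(M)$. Given $A_k$, cover it by finitely many compact pieces $A_k^1,\ldots,A_k^{m_k}$, each of diameter at most $\mathrm{diam}(A_k)/2$ (say, intersect $A_k$ with a finite cover of $\mathbb{R}^N$ by small closed balls). Any union $\bigcup_{j\in S}\Phi^{-1}(A_k^j)$ is itself a preimage via $\Phi$ of a compact subset of $\mathbb{R}^N$, and hence Poisson commutes with each $\Phi^{-1}(A_k^j)$. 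Iterating Corollary \ref{cor-union-heavy} therefore shows that if none of the $\Phi^{-1}(A_k^j)$ were heavy then $\bigcup_j\Phi^{-1}(A_k^j)=\Phi^{-1}(A_k)$ would not be heavy, contradicting the inductive hypothesis. I then pick any $j$ with $\Phi^{-1}(A_k^j)$ heavy and set $A_{k+1}:=A_k^j$.

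Compactness yields $\bigcap_k A_k=\{c\}$ for a single point $c\in\Phi(M)$, and $\Phi^{-1}(c)=\bigcap_k\Phi^{-1}(A_k)$. To finish I would invoke the continuity property of relative symplectic cohomology under nested intersections of compact sets from \cite{Var2021}: the unital restriction maps $SH_M(\Phi^{-1}(A_k);\Lambda)\to SH_M(\Phi^{-1}(A_{k+1});\Lambda)$ assemble $SH_M(\Phi^{-1}(c);\Lambda)$ as the (homotopy) colimit of this system. Each term is nonzero by Theorem \ref{t: main1}(A), and the unit is preserved along all restrictions, so the unit survives in the colimit and $SH_M(\Phi^{-1}(c);\Lambda)\neq 0$. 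Theorem \ref{t: main1}(A) then provides the contradiction that $\Phi^{-1}(c)$ is a heavy fiber.

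The main obstacle is the final passage to the limit: ensuring that SH-visibility is preserved under the decreasing intersection $\Phi^{-1}(c)=\bigcap_k\Phi^{-1}(A_k)$. This relies on the continuity axiom for relative $SH$ along decreasing families of compact sets combined with unitality of the restriction maps, together with the fact that cohomology commutes with sequential colimits of the relevant unital cochain models. Granted these standard features of the framework, the rest of the argument is just compactness-plus-bisection driven by Corollary \ref{cor-union-heavy}.
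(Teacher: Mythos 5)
Your bisection scheme and the use of Corollary \ref{cor-union-heavy} are sound (preimages $\Phi^{-1}(A)$ of compact sets do Poisson commute, since one can choose smooth $g_A:\mathbb{R}^N\to[0,1]$ vanishing exactly on $A$ and the functions $g_A\circ\Phi$, $g_B\circ\Phi$ Poisson commute; and a union of preimages is again a preimage, so the induction over the finitely many pieces goes through). The genuine gap is the final passage to the limit. The ``continuity axiom'' you invoke --- that $SH_M(\bigcap_k \Phi^{-1}(A_k);\Lambda)$ is the colimit of the $SH_M(\Phi^{-1}(A_k);\Lambda)$ along restriction maps, so that nonvanishing of the unit persists --- is not a property available in \cite{Var2021} or anywhere in this framework. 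Because the invariant is defined as homology of a \emph{completed} telescope, homology does not commute with this kind of limit, and the paper explicitly flags that the behaviour of relative symplectic cohomology under such approximations is unclear: the statement you need (SH-visibility descends to a decreasing intersection, i.e.\ nearly SH-visible implies SH-visible) is exactly Corollary \ref{co: nearly visible}, which in this paper is obtained \emph{only} by translating into heaviness via Theorem \ref{t: main1}(A) and Lemma \ref{l:opennbhd}, not by any chain-level continuity. So as written your last step rests on an unproved (and previously open) property rather than on ``standard features of the framework.''

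The repair is immediate and stays inside your own argument: your construction already produces \emph{heavy} sets $\Phi^{-1}(A_0)\supset\Phi^{-1}(A_1)\supset\cdots$ with intersection $\Phi^{-1}(c)$, and Lemma \ref{l:opennbhd} (heaviness passes to decreasing intersections of compact sets, proved purely with spectral invariants) gives directly that $\Phi^{-1}(c)$ is heavy --- no statement about $SH_M$ of the intersection is needed. For comparison, the intended argument behind the paper's one-line attribution is slightly more direct and avoids bisection: if no fiber were heavy, then by Lemma \ref{l:opennbhd} each $c\in\Phi(M)$ has a closed ball $\overline{B}(c,\epsilon_c)$ with $\Phi^{-1}(\overline{B}(c,\epsilon_c))$ not heavy; compactness of $\Phi(M)$ yields a finite subcover, and iterating Corollary \ref{cor-union-heavy} over these Poisson-commuting preimages shows $M$ itself is not heavy, contradicting $SH_M(M;\Lambda)\simeq H^*(M;\Lambda)\neq 0$ together with Theorem \ref{t: main1}(A). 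Either way, the limit/compactness step must be carried out at the level of heaviness (spectral invariants), not at the level of relative symplectic cohomology.
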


This corollary in turn implies a positive answer to \cite[Question 3.4]{Entov} about dispersion-freeness of symplectic quasi-states constructed using the homogenized spectral invariant, see \cite[Theorem 3.1]{Entov}. For a more in depth discussion of this question see \cite[Section 6.2]{Pol}. This implication was communicated to us by Adi Dickstein, Yaniv Ganor, Leonid Polterovich and Frol Zapolsky after the first preprint version of the paper was released. It would take us too far to try to introduce the relevant definitions or notions regarding symplectic quasi-states here, so we refer the reader to \cite{Entov} for those. As noted in Remark \ref{rem-idem}, the results stated in the introduction are in fact proved for all idempotents (not just for the unit) and we use this below.

\begin{corollary}
Every symplectic quasi-state constructed in \cite[Theorem 3.1]{Entov} corresponding to the unit in a field factor of $QH(M;\Lambda)$  is dispersion-free.
\end{corollary}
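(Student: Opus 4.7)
The plan is to apply Corollary~\ref{c:involution} (in the case $N=1$) to produce, for every smooth function on $M$, an $e$-heavy level set, and then to convert this into dispersion-freeness via the formal properties of $\zeta_e$.

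Fix an idempotent $e$ which is the unit in a field factor of $QH(M;\Lambda)$, and let $\zeta_e$ denote the associated Entov--Polterovich symplectic quasi-state. By a standard reformulation, dispersion-freeness of $\zeta_e$ is equivalent to the identity $\zeta_e(\phi\circ F)=\phi(\zeta_e(F))$ holding for every $F\in C(M)$ and every continuous $\phi:\RR\to\RR$. Since $\zeta_e$ is Lipschitz in the $L^\infty$-norm and $C^\infty(M)$ is dense in $C(M)$, it suffices to prove this for $F\in C^\infty(M)$, and by the Stone--Weierstrass theorem (applied on the compact interval $F(M)$) it further suffices to treat polynomial $\phi$.

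Given $F\in C^\infty(M)$, Remark~\ref{rem-idem} lets us apply Corollary~\ref{c:involution} to the idempotent $e$ and the trivially involutive map $\Phi=F:M\to\RR$, producing an $e$-heavy fiber $F^{-1}(c)$ for some $c\in F(M)$. The defining inequality of $e$-heaviness applied both to $F$ and to $-F$ on this fiber gives $\zeta_e(F)\geq c$ and $\zeta_e(-F)\geq -c$. Since $F$ and $-F$ Poisson-commute, linearity of $\zeta_e$ on Poisson-commuting subspaces yields $\zeta_e(-F)=-\zeta_e(F)$, and the two inequalities combine to force $\zeta_e(F)=c$.

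For any polynomial $p$, the composition $p\circ F$ equals $p(c)$ on the heavy set $F^{-1}(c)$, so the same argument applied to $\pm p\circ F$ gives $\zeta_e(p\circ F)=p(c)=p(\zeta_e(F))$. Combined with the reductions in the second paragraph, this establishes dispersion-freeness of $\zeta_e$. The geometric content is concentrated entirely in the production of the heavy fiber from Corollary~\ref{c:involution}; the remainder is a formal argument using the partial linearity and $L^\infty$-continuity that are built into the definition of an Entov--Polterovich quasi-state, and the hypothesis that $e$ is a unit in a field factor is exactly what guarantees that $\zeta_e$ is a genuine quasi-state to which these properties apply.
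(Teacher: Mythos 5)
Your argument is correct and follows essentially the same route as the paper: both proofs extract an $e$-heavy fiber of $F$ from Corollary~\ref{c:involution} and then use the fact that the field-factor hypothesis makes $\zeta_e$ a genuine quasi-state to pin down its value on the subalgebra generated by $F$. The only difference is cosmetic: where the paper cites \cite[Theorem 4.1.e]{Entov} (heavy implies superheavy for genuine quasi-states) and concludes that the pushforward of the quasi-state under $F$ is evaluation at the heavy level, you re-derive that consequence inline from $\zeta_e(-G)=-\zeta_e(G)$ applied to $\pm(p\circ F)$, and you route through Stone--Weierstrass rather than only the test function $\phi(x)=x^2$.
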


\begin{proof}
A quasi-state $z$ is dispersion-free if and only if it satisfies $z(f^2)=z(f)^2$ for each continuous function $f$.

Let $z$ be the symplectic  quasi-state corresponding to the unit in a field factor of $QH(M;\Lambda)$.
By the $C^0$-Lipschitz property of $z$ (cf. Theorem \ref{t:property}(4)), it is enough to show that $z(f^2)=z(f)^2$ for each smooth function $f$. Consider $f$ as an involutive map $f: M \to \mathbb{R}$. Corollary \ref{c:involution} implies that this map has a heavy fiber, say at the point $t \in \mathbb{R}$. Since $z$ is a genuine quasi-state, this fiber is also superheavy by \cite[Theorem 4.1.e.]{Entov}. It then follows that the pushforward functional $f_*z$ is in fact the evaluation at $t$ directly from the definition of heavy and superheavy (see e.g. Definition \ref{def-heavy}). In particular $z(f^2)$ is the evaluation of the function $x \mapsto x^2$ at $t$, that is just $t^2: z(f^2)=t^2$. But $z(f) = t$, that is $z(f^2)=t^2=z(f)^2$. It follows that $z$ is dispersion-free.
\end{proof}

Another immediate corollary is a solution to Conjecture 1.3 in \cite{TVar}.

\begin{corollary}
If $L$ is a Lagrangian submanifold which admits a bounding cochain with non-zero Floer cohomology, then $L$ is not contained in an $SH$-invisible set $K$.  
\end{corollary}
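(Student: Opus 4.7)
The plan is to reduce the statement to the main characterization in Theorem \ref{t: main1}(A) and then invoke the classical fact that Lagrangians with non-zero Floer cohomology are heavy, combined with the elementary observation that heaviness is monotone under inclusion.

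First I would rephrase the goal as follows. By Theorem \ref{t: main1}(A), a compact set is $SH$-invisible if and only if it is not heavy. So it suffices to prove that every compact $K \subset M$ containing $L$ is heavy for some idempotent of $QH(M;\Lambda)$ (cf. Remark \ref{rem-idem}). I would establish this via two steps: (i) $L$ itself is heavy for a suitable idempotent, and (ii) heaviness is monotone under inclusion, i.e. $A \subset B$ and $A$ heavy imply $B$ heavy.

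Step (ii) is essentially tautological from the definition via spectral invariants: if $A \subset B$ then $\inf_B H \leq \inf_A H$ for every smooth function $H$, so the heaviness inequality bounding the homogenized spectral invariant below by $\inf_A H$ immediately yields the analogous inequality for $\inf_B H$.

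Step (i) is the theorem of Entov-Polterovich and Fukaya-Oh-Ohta-Ono: if $L$ carries a bounding cochain $b$ with $HF^{*}(L,L;b)\neq 0$, then $L$ is heavy with respect to the idempotent in $QH(M;\Lambda)$ obtained by choosing a field factor whose image under the closed-open map hits the unit of $HF^{*}(L,L;b)$. Once (i) and (ii) are in hand, any compact $K \supset L$ is heavy for that same idempotent, so by Theorem \ref{t: main1}(A) it is $SH$-visible, ruling out the existence of an $SH$-invisible $K$ containing $L$.

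The only real subtlety is the bookkeeping of which idempotent of $QH(M;\Lambda)$ one works with, but this is handled uniformly by the convention explained in Remark \ref{rem-idem}. Beyond that, no genuinely new ingredient is needed: the corollary is a formal consequence of Theorem \ref{t: main1}(A) and pre-existing results on the heaviness of Floer-theoretically nontrivial Lagrangians.
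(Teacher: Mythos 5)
Your proposal is correct and follows essentially the same route as the paper: the only input is the Fukaya--Oh--Ohta--Ono theorem that $L$ is heavy, combined with Theorem \ref{t: main1}(A). The paper applies part (A) directly to $L$ (getting $SH_M(L;\Lambda)\neq 0$, so no $K\supset L$ can be $SH$-invisible by unitality of restrictions), whereas you first pass to $K$ via the tautological monotonicity of heaviness under inclusion and then apply part (A) to $K$; these are interchangeable one-line steps, so the argument is the same in substance.
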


\begin{proof}
By Theorem 1.6 in \cite{FOOO}, $L$ is heavy so Theorem \ref{t: main1} part (A) implies that $SH_M(L; \Lambda) \neq 0$.
\end{proof}

\begin{remark}
If $K$ is a union of Lagrangian submanifolds $\{K_i\}$ one can also observe certain parallels between the notion of (split)-generating the Fukaya category and the notion of superheaviness/SH-fullness. Yash Deshmukh presented to us a convincing (and not too difficult) argument showing that if $\{K_i\}$ satisfies the Abouzaid generation criterion then $K=\cup K_i$ is SH-full.
\end{remark}

We note the following partial converse to part (C) of Theorem \ref{t: main1}.

%\footnote{There was a (not clean) implication $(E)$ in previous versions, see Theorem 1.8 in the 1016 version. It is a three-line corollary under certain conditions. It depends on whether we want to put it here.}

\begin{theorem}\label{t:superheavyc1}
Let $K$ be a compact subset of $M$, and let $K_0\supset K_1\supset \ldots$ be compact subsets that all contain $K$ in their interior such that $\bigcap K_i=K$.
If $c_{1}(TM)$ vanishes on $\pi_2(M)$ and the $\mathbb{Z}$-graded relative symplectic cochain complexes $SC^*_M(K_n;\Lambda)$ have finite boundary depth in all degrees for all $n\geq 1$, then $K$ being SH-full implies that it is superheavy.
\end{theorem}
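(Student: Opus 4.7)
The plan is to reduce to proving each approximating set $K_n$ is superheavy and then pass to the limit. For the decreasing sequence $K_n \supset K$ with $K \subset \mathrm{int}(K_n)$ and $K = \bigcap K_n$, a standard compactness argument shows $\max_{K_n}H \to \max_K H$ for every continuous $H: M \to \RR$ (any convergent subsequence of near-maximizers lies in $\bigcap K_n = K$); hence if each $K_n$ satisfies $\mu_e(H) \leq \max_{K_n} H$ for all smooth $H$, then $\mu_e(H) \leq \max_K H$, i.e., $K$ is superheavy. Moreover, SH-fullness of $K$ is immediately inherited by each $K_n$, since $M \setminus K_n \subset M \setminus K$ implies that every compact subset of $M\setminus K_n$ is a compact subset of $M\setminus K$.

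So fix $n$ and show $K_n$ is superheavy. It is enough to establish that for every smooth $H$ and every $\epsilon > 0$, one has $c(e,H) \leq \max_{K_n} H + \epsilon + \beta_n$ for a constant $\beta_n$ independent of $H$ and $\epsilon$: then applying this to $kH$, dividing by $k$, and letting $k \to \infty$ followed by $\epsilon \to 0$ produces $\mu_e(H) \leq \max_{K_n} H$. Given $H$ and $\epsilon$, set $c_0 := \max_{K_n} H$ and $C := \{H \geq c_0 + \epsilon\}$. Since $H < c_0 + \epsilon$ on $K_n$, the set $C$ is a compact subset of $M \setminus K_n \subset M \setminus K$, and so by SH-fullness of $K$ and Theorem~\ref{t: main1}(A), $SH_M(C;\Lambda) = 0$.

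The core step is to convert this vanishing into the desired spectral bound. Consider the restriction map $\rho \colon QH(M;\Lambda) \to SH_M(C;\Lambda)$, realized at the chain level by continuations from the filtered Hamiltonian Floer complex of $H$ into an acceleration telescope computing $SH_M(C;\Lambda)$ whose Hamiltonians are normalized to be approximately $0$ on $C$ and large on $M \setminus C$. These continuations shift action by at most $\sup_{M\setminus C} H \leq c_0 + \epsilon$ up to vanishing terms. The cocycle representing $e$ in $HF^*(H)$ at action $c(e,H)$ thus lands in the telescope at action roughly $c(e,H) - (c_0 + \epsilon)$, and since its image in $SH_M(C;\Lambda) = 0$ vanishes, the $\ZZ$-grading coming from $c_1(TM)$ vanishing on $\pi_2(M)$ isolates $e$ in a fixed degree, and the finite boundary depth hypothesis on $SC^*_M(K_n;\Lambda)$, transferred to the acceleration complex for $C$ by the restriction maps and a cofinality argument (noting that $M\setminus C$ is contained in $K_n$), produces a null-cobounding primitive whose action exceeds the image level by at most a uniform constant $\beta_n$. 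Rearranging yields $c(e,H) \leq c_0 + \epsilon + \beta_n$, as needed.

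The hard part will be this last step: producing the rigorous chain-level action estimate linking $SH_M(C;\Lambda) = 0$ to a filtered bound on $c(e,H)$. Both technical hypotheses enter essentially here. The $\ZZ$-grading is needed because it pins $e$ in a single degree so that the degree-wise finite boundary depth is applicable and no pathological degree-shift can occur under completion. The finite boundary depth of $SC^*_M(K_n;\Lambda)$ is needed because, without it, the $q$-adic completion defining relative symplectic cohomology could in principle absorb arbitrarily large null-homotopies of $e$ and no uniform constant $\beta_n$ would exist. Arranging the acceleration Hamiltonian for $C$ (a natural candidate is a shifted rescaling of $H$) so that the finite-boundary-depth input for $K_n$ transfers cleanly to the relevant telescope, and then matching action shifts of continuations with $\sup_{M\setminus C} H$, is the core technical maneuver of the argument.
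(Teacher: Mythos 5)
Your outer reduction (approximate $K$ by the $K_n$, prove each $K_n$ superheavy, and pass to the limit) matches the paper's strategy (Lemma \ref{l:opennbhd}, Corollary \ref{co: superheavy2}), and the observation that each $K_n$ inherits SH-fullness is fine. But the inequality you aim for is the wrong one in this paper's conventions, and the core step is both unproved and based on a false containment. In Definition \ref{def-heavy}, heaviness is $\mu(1;H)\le \max_K H$ and superheaviness is $\mu(1;H)\ge \min_K H$; so establishing $\mu_e(H)\le \max_{K_n}H$ for the unit idempotent only re-proves heaviness of $K_n$, which already follows from SH-fullness and Theorem \ref{t: main1}(A). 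To reach superheaviness one needs a \emph{lower} bound on $c(1;kH)$ for non-positive $H$ vanishing on $K_n$, and this is exactly where the hypothesis $c_1(TM)|_{\pi_2(M)}=0$ is really used in the paper: not merely to have a $\mathbb{Z}$-grading, but through the Poincar\'e duality identity $c(1;H)=-c([vol];-H)$ of Lemma \ref{l: cy}, which converts the problem into an upper bound on $c([vol];-kH)$; that bound is then deduced from $PSS_{K_n}([vol])\neq 0$ (SH-fullness plus Mayer--Vietoris gives $QH(M;\Lambda)\simeq SH_M(K_n;\Lambda)$) together with the primitive construction of Proposition \ref{p:PSS=0}/Theorem \ref{t:SHred_superheavy} and Lemma \ref{l:SH=SHred}, which is where the degreewise finite boundary depth of $SC^*_M(K_n;\Lambda)$ actually enters. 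Your proposal never touches $[vol]$ or duality, so even granting your key estimate it would not yield superheaviness unless you are silently in the homological conventions, in which case $e$ is the fundamental class rather than an idempotent and its interaction with $SH_M(C;\Lambda)=0$ via PSS is not addressed.

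Independently, the key estimate is a genuine gap. You want: $SH_M(C;\Lambda)=0$ for $C=\{H\ge \max_{K_n}H+\epsilon\}$ implies $c(e,H)\le \max_{K_n}H+\epsilon+\beta_n$ with $\beta_n$ depending only on $K_n$. Vanishing of $SH_M(C;\Lambda)$ only says that the relevant cocycle becomes exact in the completed telescope for $C$; converting exactness into an action estimate requires boundary-depth control for the complexes actually appearing, namely the telescope of $C$ (or the Floer complexes of Hamiltonians adapted to $H$), and no such control is among the hypotheses. Your proposed transfer of the boundary-depth hypothesis from $K_n$ to $C$ rests on the claim that $M\setminus C\subset K_n$, which is false: $\{H<\max_{K_n}H+\epsilon\}$ can be far larger than $K_n$ (only $C\subset M\setminus K_n$ holds). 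Consequently any constant you extract depends on $C$, hence on $H$ and on the rescaling $kH$, and the homogenization step that needs $\beta_n$ uniform collapses. This uniformity problem is precisely why the paper does not argue through superlevel sets at all, but runs the boundary-depth argument on the fixed telescope of $K_n$ itself, via reduced symplectic cohomology, where the stated hypothesis lives.
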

\begin{remark}
Our methods would extend to show a similar result in certain negative and positive monotone cases as well  (see Proposition \ref{p:PSS=0} and the discussion afterwards) but we are not confident of their usefulness and optimality. Hence we refrain from a detailed discussion.
\end{remark}

For example, as a corollary, the skeleta from Theorem 1.24 of \cite{TVar} can now be proved superheavy. We omit a full proof of this in order not to introduce notation that is not used elsewhere in the paper.

We obtain Theorem \ref{t:superheavyc1} from a more general result Theorem \ref{t:SHred_superheavy} that involves a condition involving reduced symplectic cohomology $SH^{red}_M(M;\Lambda)$. This can be defined as the quotient of $SH_M(M;\Lambda)$ by its valuation $\infty$ subspace (see the discussion surrounding \eqref{eq:SHSHred}). 

Finally, let us note the following result, which is a by-product of our discussion:

\begin{proposition}\label{prop- red-vanishing}$SH^{red}_M(K;\Lambda)=0$ implies $SH_M(K;\Lambda)=0.$
\end{proposition}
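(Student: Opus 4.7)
The plan is to argue by contrapositive: assuming $SH_M(K;\Lambda) \neq 0$, I would show $SH^{red}_M(K;\Lambda) \neq 0$. Since the pair-of-pants product makes $SH_M(K;\Lambda)$ into a unital ring, the assumption produces a nonzero unit $[1] \in SH_M(K;\Lambda)$, and it suffices to exhibit that $[1]$ has finite valuation, equivalently that its image in the quotient $SH^{red}_M(K;\Lambda)$ is nonzero.

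Next I would combine two bounds on $\val([1])$. From below, the restriction/PSS map $QH(M;\Lambda) \to SH_M(K;\Lambda)$ sends $1_{QH}$ (which has valuation $0$) to $[1]$ through a chain-level map weighted by non-negative energies of PSS disks; this produces a cocycle representative of $[1]$ at valuation $0$, so $\val([1]) \geq 0$. From above, the idempotent identity $[1]^2 = [1]$ combined with the energy estimate $\val([x][y]) \geq \val([x]) + \val([y])$ for the pair-of-pants product (coming from positivity of energy of pair-of-pants curves) yields $\val([1]) \geq 2\val([1])$, forcing $\val([1]) \leq 0$ unless $\val([1]) = \infty$. Taken together, these bounds pin $\val([1])$ to the dichotomy $\{0, \infty\}$.

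The last step is to rule out $\val([1]) = \infty$ while $[1] \neq 0$. This is the heart of the argument and I would rely on a Hausdorffness statement for the filtration on $SH_M(K;\Lambda)$ of the kind already appearing in the discussion around \eqref{eq:SHSHred}: the cochain complex $SC_M(K;\Lambda_{\geq 0})$ is constructed as a completion, and the framework there identifies the valuation $\infty$ subspace in cohomology with the kernel of a natural completion map, which the setup ensures is trivial (via a Mittag-Leffler / $\varprojlim^1$ vanishing condition baked into the construction). Granting this, $\val([1]) = \infty$ would force $[1] = 0$, contradicting the standing assumption, so $\val([1]) = 0$ and $[1]$ descends to a nonzero class in $SH^{red}_M(K;\Lambda)$, completing the contrapositive.

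The hard part is not the idempotency dichotomy, which is essentially a one-line calculation, but this Hausdorffness/completion step: translating "$\val([1]) = \infty$" into "$[1] = 0$" at the level of cohomology. This is precisely where the structural content of the paper's treatment of $SH^{red}$ must be invoked, and I expect the proof to be a short deduction once the relevant completeness property of $SH_M(K;\Lambda)$ arising from the paper's main setup is made explicit. The upshot is that the whole proposition reduces to that one structural fact together with an idempotency argument about the unit.
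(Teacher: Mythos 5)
Your reduction to the single question of ruling out ``$[1]\neq 0$ while $\val([1])=\infty$'' is the right skeleton, and it matches the paper's starting point: the kernel of the map \eqref{eq:SHSHred} is exactly the valuation-$\infty$ subspace, so $SH^{red}_M(K;\Lambda)=0$ forces the unit to have infinite valuation. The gap is in how you propose to rule that case out. There is no Hausdorffness or Mittag-Leffler/$\varprojlim^1$ property ``baked into the construction'' identifying the valuation-$\infty$ subspace of $SH_M(K;\Lambda)$ with $0$: that subspace is precisely $\overline{\im}(\delta)/\im(\delta)$, i.e.\ the kernel of $SH_M(K;\Lambda)\to SH^{red}_M(K;\Lambda)$, and it is nonzero in general. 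Its vanishing is equivalent to \eqref{eq:SHSHred} being an isomorphism, which the paper only gets under an extra hypothesis (finite boundary depth, Lemma \ref{l:SH=SHred}); if it held automatically, the proposition would be immediate and the entire discussion of $SH^{red}$ and boundary depth would be vacuous. Completeness of the (telescope) chain complex does not make the induced filtration on its cohomology Hausdorff, so the step you call ``a short deduction from the paper's setup'' is in fact false as stated.

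The mechanism that actually closes this gap, and the one the paper uses, is the chain-level product structure of Theorem \ref{thm-AGV} combined with Lemma \ref{l: algebaric lemma}: if $y$ is a degree-$0$ cocycle representing an idempotent and $\val(y)>0$, write $y-y\cdot y=d\alpha$, iterate via the Leibniz rule to get $y^{k}-y^{k+1}=d(y^{k-1}\alpha)$, and sum; the series $\sum_{k}y^{k}\alpha$ converges because $\val(y)>0$ and the \emph{chain complex} is complete, exhibiting $y$ as exact. Hence a nonzero idempotent has no representative of positive valuation, and in particular cannot have infinite cohomological valuation. Your cohomology-level estimate $\val([1])\geq 2\val([1])$ only yields the dichotomy $\val([1])\leq 0$ or $\val([1])=\infty$ and cannot exclude the second alternative; excluding it genuinely requires the chain-level product (Leibniz) structure, which your argument never invokes at the decisive step.
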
 

The proof is given in Section \ref{ss-superheavy} after all the necessary notions are properly introduced.

\subsection*{Acknowledgements}
We thank Leonid Polterovich and Sobhan Seyfaddini for helpful discussions and their interest. We also thank the anonymous referee for helpful suggestions.

C.M. was supported by the Simons Collaboration on Homological Mirror Symmetry \#652236 and the Royal Society University Research Fellowship. U.V. was supported by the T\"{U}B\.{I}TAK 2236 (CoCirc2) programme with a grant numbered 121C034.

\section{Preliminary}
First we specify the ring and field that will be used. The Novikov field $\Lambda$ is defined by
\begin{align}\label{eq:Novi}
\Lambda=\left \lbrace \sum_{i=0}^{\infty}a_{i}T^{\lambda_{i}}\mid a_{i}\in \mathbb{Q}, \lambda_{i}\in\mathbb{R}, \lambda_{i}<\lambda_{i+1}, \lim_{i\rightarrow \infty}\lambda_{i}=+\infty \right \rbrace
\end{align}
where $T$ is a formal variable. There is a valuation $\val: \Lambda\to \RR \cup \lbrace +\infty\rbrace$ given by $\val (\sum_{i=0}^{\infty}a_{i}T^{\lambda_{i}}):= \min_{i}\lbrace \lambda_{i} \lvert a_{i}\neq 0\rbrace$ and $\val (0):= +\infty$. Then for any real number $r$, we define $\Lambda_{> r}$ be the subset of elements with valuation greater than $r$. Similarly we define $\Lambda_{\ge r}$. In particular, $\Lambda_{\ge 0}$ is called the Novikov ring. It is a commutative ring with a unit $1\in \QQ$. Both $\Lambda_{> r}$ and $\Lambda_{\ge r}$ are modules over $\Lambda_{\ge 0}$. When $r\ge 0$, they are ideals of $\Lambda_{\ge 0}$.

On the level of vector spaces, we identify the quantum cohomology $QH^{*}(M; \Lambda)$ with $H^{*}(M; \Lambda)= H^{*}(M; \QQ)\otimes_{\QQ}\Lambda$. Three (co)homology classes that will be frequently used are
\begin{enumerate}
	\item $1\in H^{0}(M; \Lambda)$, the unit class;
	\item $[vol]\in H^{2n}(M; \Lambda)$, the volume class;
	\item $[M]\in H_{2n}(M; \Lambda)$, the fundamental class,
\end{enumerate}  
where $\langle [vol], [M]\rangle=1$ under the cohomology-homology pairing.

Another related $\mathbb{Z}$-graded algebra $\Lambda_{\omega}=\bigoplus_{n\in\mathbb{Z}} \Lambda_{\omega}^n$ is defined by
$$
\Lambda_{\omega}^n :=\left \lbrace \sum_{i=0}^{\infty}a_{i}e^{A_{i}}\mid a_{i}\in \mathbb{Q}, A_{i}\in \pi_{2}^n(M)/\sim, \lim_{i\rightarrow \infty}\omega(A_{i})=+\infty \right \rbrace
$$
where $\pi_{2}^n(M)$ is the subset of $\pi_2(M)$ of classes satisfying $2c_1(M)[A]=n$ and $\sim$ is the relation that 
$$
A\sim A' \quad \text{iff} \quad \omega(A)=\omega(A').
$$
Note that we have an algebra map $\Lambda_{\omega}\to \Lambda$ by $e^{A}\mapsto T^{\omega(A)}$. 

%Unless $c_1(M)$ vanishes on $\pi_2(M)$, ${\Lambda}$ is not a graded module over ${\Lambda_{\omega}}$. 

%For the free $\Lambda_{\omega}$-module $I_{\Lambda_{\omega}}$ and the free $\Lambda$-module $I_{\Lambda}$ generated by the same set $S$ there is a base-change map
%$$
%I_{\Lambda_{\omega}}\otimes_{\Lambda_{\omega}}\Lambda \to I_{\Lambda}: \quad a\cdot e^{A}\otimes T^{\lambda} \to a\cdot T^{\lambda+ \omega(A)}, \text{ for every }a\in S.
%$$

\begin{remark}
It is instructive to construct the map $\Lambda_{\omega}\to \Lambda$ in stages: \begin{enumerate}
 \item forget the grading.
 \item take the quotient by the subalgebra given by the group-algebra of $\{\omega(A)=0\}\subset\pi_2(M).$
\item make a completion to come to the completed group-algebra of the abelian group $\omega(\pi_2(M)).$
\item use the obvious inclusion $\omega(\pi_2(M))\subset\mathbb{R}.$
\end{enumerate}
\end{remark}

\begin{remark}
	We defined our Novikov rings with ground field being $\QQ$. But all our theorems work for any ground field containing $\QQ$. Moreover, if the full Hamiltonian Floer theory package on $M$ can be defined over some other commutative ring, then our results work for those as well.
\end{remark}

\subsection{Spectral invariants}

From the outset let us note that we need to use virtual techniques to do Hamiltonian Floer theory on a general closed symplectic manifold. Here we follow the same convention in \cite{Var}, where Pardon's approach \cite{Pardon} was used. On the other hand, the analysis in this article is independent of this choice and we will not include all the necessary technical details and language regarding Pardon's implicit atlases etc. in our discussion. We will mostly omit discussing almost complex structures as well.

With this in mind, we briefly review the definition of Hamiltonian Floer groups, referring to \cite{HS} for details.\footnote{In this article we always work on closed symplectic manifolds and only use contractible orbits to define Hamiltonian Floer groups.} Let $(M, \omega)$ be a closed symplectic manifold and $H$ be a non-degenerate Hamiltonian function $M\times S^1\to\mathbb{R}$. Let $\gamma$ be a contractible one-periodic orbit of $H$ and $u$ be a disk capping of $\gamma$. The action of $(\gamma, u)$ is 
$$
\cA_{H}(\gamma, u):= \int_{\gamma} H + \int u^{*}\omega,
$$ and we can associate to it an integer degree using the Conley-Zehnder index $CZ(\gamma, u)$.
We define an equivalence relation on capped orbits by
\[
(\gamma, u) \sim  (\gamma', u') \text{ if and only if }\gamma=\gamma' \text{ and } \cA_{H}(\gamma, u)=\cA_{H}(\gamma', u') \text{ and } CZ(\gamma, u)=CZ(\gamma', u').
\]
The equivalence classes are denoted by $[\gamma, u]$.
The classical Hamiltonian Floer complex in degree $n\in \mathbb{Z}$, i.e. $CF^n(H)$ consists of all formal sums
$$
x= \sum_{i} x_{i}\cdot [\gamma_{i}, u_{i}], \quad \cA_{H}([\gamma_{i}, u_{i}])\to +\infty.
$$
Here the coefficient $x_{i}$'s are in $\QQ$, and $[\gamma_{i}, u_{i}]$'s are equivalence classes of capped orbits of degree $n$.
The valuation of such a $\mathbb{Q}$-linear combination of capped orbits is defined as the minimum of the actions of summands with nonzero coefficients. Novikov ring $\Lambda_{\omega}$ acts on $CF^*(H)$ by $e^{A}\cdot [\gamma, u]:= [\gamma, u+A]$, making $CF^*(H)$ a free graded $\Lambda_{\omega}$-module. Choosing a reference cap for each $1$-periodic orbit will give a particular basis for $CF^*(H)$.

The Floer differential $d: CF(H)\to CF(H)$ is defined by counting Floer cylinders. We remark that our conventions about Floer equations and Conley-Zehnder indices follow \cite{Var, BSV}. In particular, the differential increases the action and index of a capped orbit. The resulting homology is written as $HF^*(H)$. For any $p\in \mathbb{R}$ we define
$$
CF_{\ge p}(H):= \lbrace x\in CF(H) \mid \cA_{H}(x)\ge p\rbrace.
$$
Since the differential $d$ increases the action, we also have a homology $HF_{\ge p}^* (H):= H^*(CF_{\ge p}(H), d)$. And there is a natural map $i_{p}^*: HF_{\ge p}^* (H)\to HF^*(H)$ induced by the inclusion. 

Next let $PSS^{H}: QH^*(M; \Lambda_{\omega}) \to HF^*(H)$ be the PSS isomorphism \cite{PSS} between the quantum cohomology of $M$ and $HF(H)$. 
We usually omit the superscript $H$ when it is clear. 
%It is induced by a chain level map
%$$
%cPSS^{H}: CM(f, \Lambda_{\omega})\to CF(H),
%$$
%where $f$ is a Morse function on $M$ and the map counts spiked disks connecting critical points of $f$ and orbits of $H$. 
For any non-zero pure degree class $a\in QH^n(M; \Lambda_{\omega})$ the spectral invariant $c(a; H)$ is defined as
$$
c(a; H):= \max \lbrace p\in \RR \mid PSS^{H}(a)\in \im(i_{p}^n) \rbrace.
$$
One can check that it also equals to $\max\lbrace \cA_{H}(x)\mid x\in CF^n(H), dx=0, [x]=PSS^{H}(a)\rbrace$. 

\begin{remark}
For writing $\max$ in the definition of the spectral invariant, we are relying on the existence of best representatives result of Usher, Theorems 1.3 and 1.4 from \cite{U}.
\end{remark}

This is the setup generally used in the spectral invariant literature. We will simplify it a bit (using a simpler Novikov cover) and justify why nothing is lost by doing this. Consider the Floer complex $CF(H; \Lambda)$ which is the $\Lambda$-vector space generated by the $1$-periodic orbits (without cappings). We equip it with the $\mathbb{Z}/2$ grading coming from the Lefschetz signs of the corresponding fixed points. The cylinders contributing to the Floer differential are weighted by the topological energy. More precisely, let $u$ be a Floer cylinder connecting two orbits $\gamma_{-}, \gamma_{+}$. Then its contribution in the differential is weighted by $T^{E_{top}(u)}$, where
$$
E_{top}(u):= \int_{\gamma_{+}}H -\int_{\gamma_{-}}H +\int u^{*}\omega.
$$
Note that we have $E_{top}(u)\ge 0$. We write the resulting homology as  $HF(H; \Lambda)$. For $x=\sum x_{i}\gamma_{i}$ with $\gamma_{i}$'s being orbits and $x_{i}\in \Lambda$, we define
$$
\val(x):= \min_{i} \lbrace \val(x_{i})\rbrace.
$$

We have the map
	$$
	CF(H)\otimes_{\Lambda_{\omega}}\Lambda\to CF(H; \Lambda): \quad [\gamma, u]\otimes T^{E}\to T^{\cA_{H}([\gamma, u])+E}\cdot \gamma .
	$$
	It is an isomorphism of chain complexes, which also preserves the valuations.

We have a $PSS$-map over $\Lambda$ given as the composition:
$$
PSS^{H}_{\Lambda}: QH(M; \Lambda)\to HF(H)\otimes_{\Lambda_{\omega}}\Lambda\to HF(H; \Lambda).
$$ 
Similarly for any non-zero class $a\in QH(M;\Lambda)$, we define
$$
c_{\Lambda}(a; H):= \max \lbrace \val(x)\in \RR \mid x\in CF(H; \Lambda), dx=0, [x]=PSS^{H}_{\Lambda}(a)\rbrace.
$$
And a handy comparison result is the following.

\begin{proposition}\label{p:compareSpectral}
	Let $H$ be a non-degenerate function and $a\in QH(M; \Lambda_{\omega})$ be a non-zero pure integral degree class.
Let $\tilde{a} \in QH(M; \Lambda)$ be the image of $a$ under the map $QH(M; \Lambda_{\omega}) \to QH(M; \Lambda)$ induced by $\Lambda_{\omega}\to \Lambda$. Then we have that $c(a; H)= c_{\Lambda}(\tilde{a}; H)$.
\end{proposition}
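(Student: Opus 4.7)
The plan is to exploit the valuation-preserving chain isomorphism
\[
\phi:CF(H)\otimes_{\Lambda_\omega}\Lambda \xrightarrow{\sim}CF(H;\Lambda)
\]
spelled out in the text, together with Usher's best representative theorem. By the construction of $PSS^{H}_{\Lambda}$ as $PSS^H$ tensored with $\Lambda$ over $\Lambda_\omega$ followed by the homology-level version of $\phi$, any cycle $x\in CF^n(H)$ representing $PSS^H(a)$ gives a cycle $\phi(x\otimes 1)\in CF(H;\Lambda)$ representing $PSS^{H}_{\Lambda}(\tilde a)$ with valuation exactly $\cA_H(x)$.

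The inequality $c_\Lambda(\tilde a;H)\ge c(a;H)$ is then immediate: applying Usher's theorem to the action-filtered complex $CF^n(H)$ yields a cycle $x\in CF^n(H)$ with $\cA_H(x)=c(a;H)$, and $\phi(x\otimes 1)$ is a valuation-$c(a;H)$ cycle in the class $PSS^{H}_{\Lambda}(\tilde a)$. The substantive content of the proposition is the reverse inequality.

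For $c_\Lambda(\tilde a;H)\le c(a;H)$, I would first leverage the $\mathbb{Z}$-grading on $CF(H)$: it induces a refined $\mathbb{Z}/(2d)$-grading on $CF(H)\otimes_{\Lambda_\omega}\Lambda$, hence on $CF(H;\Lambda)$ via $\phi$, where $d\ge 1$ generates $c_1(\pi_2(M))\subset\mathbb{Z}$ (interpret $d=\infty$ if $c_1=0$ on $\pi_2$). Cycles decompose by refined degree; given any cycle $y$ representing $PSS^{H}_{\Lambda}(\tilde a)$, its refined-degree-$n$ component $y_n$ is a cycle in the same class with $\val(y_n)\ge\val(y)$ by the non-Archimedean property. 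The refined-degree-$n$ subcomplex is identified via $\phi$ with $CF^n(H)\otimes_{\Lambda_{\omega}^{0}}\Lambda$, where $\Lambda_\omega^0\subset\Lambda_\omega$ is the degree-$0$ subring (acting on $CF^n(H)$ by $c_1=0$ capping shifts); this identification is valuation-preserving and carries $PSS^{H}_{\Lambda}(\tilde a)$ to $PSS^H(a)\otimes 1$. The claim is thus reduced to showing that base-changing from $\Lambda_\omega^0$ to $\Lambda$ preserves the spectral invariant of $PSS^H(a)$.

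The main obstacle is this final base-change step. My approach is via Usher's barcode / singular-value decomposition: the filtered complex $CF^n(H)$ over $\Lambda_\omega^0$ admits a canonical bar decomposition in which the spectral invariant of a given class is the endpoint of the bar containing it. Because $\Lambda_\omega^0\to\Lambda$ is valuation-preserving, the decomposition and its bar endpoints are preserved under tensoring, giving $c_\Lambda(\tilde a;H)\le c(a;H)$. The principal subtlety lies in carefully tracking the possible non-injectivity of $\Lambda_\omega^0\to\Lambda$, which occurs when distinct $c_1=0$ sphere classes share the same $\omega$-area and could a priori generate phantom cycles with inflated valuation; ruling this out is where one must invoke Usher's structural theorems carefully.
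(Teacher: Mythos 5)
Your outline is correct in its main structure, but it takes a genuinely different route from the paper. The paper argues directly at chain level: given a primitive $z$ with $dz=y_T-x$, it sorts the terms $rT^E\gamma_j$ into ``good'' ones (those with $E=\cA_H([\gamma_j,u])$ for some index-$n$ cap $u$) and ``bad'' ones, observes that the differential of a generator produces only good or only bad terms, and discards the bad part of $z$ to produce a representative lying in the image of $CF^n(H)\to CF(H;\Lambda)$ whose terms are a subset of those of $x$; this is elementary and self-contained. You instead decompose by the $\mathbb{Z}/2N$-grading, identify the degree-$n$ piece with $CF^n(H)\otimes_{\Lambda_\omega^0}\Lambda$ (this is fine: $CF^n(H)$ is a finite-rank free $\Lambda_\omega^0$-module with basis given by reference index-$n$ caps, so no completion issues arise), and reduce to invariance of the spectral number under the extension $\Lambda_\omega^0\hookrightarrow\Lambda$, to be handled by Usher--Zhang-type orthogonal (SVD) bases. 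Two remarks. First, your ``principal subtlety'' is misplaced: with the paper's definition of $\Lambda_\omega$, classes of equal area are already identified within each degree, so $\Lambda_\omega^0\to\Lambda$ \emph{is} injective (indeed isometric) and there are no phantom cycles. Second, the step you state but do not prove --- that the bar decomposition and its endpoints survive the base change --- is the real content of your approach: one must check that a $\val$-orthogonal family over $\Lambda_\omega^0$ remains orthogonal over $\Lambda$. This is true and provable by grouping monomials of the $\Lambda$-coefficients according to their exponent cosets modulo $\omega(\pi_2^0(M))$, since monomials in different cosets cannot cancel; note that this coset bookkeeping is exactly the mechanism behind the paper's good/bad dichotomy, so your route trades the paper's hands-on cleaning of a primitive for a cleaner structural statement whose proof still rests on the same arithmetic of exponents. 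What your approach buys is generality (a reusable lemma: spectral numbers of finite-rank filtered complexes are unchanged under Novikov value-group extension); what the paper's buys is brevity and independence from the orthogonalization machinery.
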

\begin{proof}
%	Since $CF(H)$ is a $\Lambda_{\omega}$-module, we can consider $CF(H)\otimes_{\Lambda_{\omega}}\Lambda$ modulo the following relation
%	$$
%	[\gamma, u]\sim [\gamma', u']T^{E} \quad \text{iff} \quad \gamma=\gamma', \int u^{*}\omega = E+ \int u'^{*}\omega.
%	$$
%	A new element $[\gamma, u]T^{E}$ has an action $\cA_{H}^{T}([\gamma, u]):=\cA_{H}([\gamma, u])+E$.

We can construct a chain level PSS map
$$
cPSS^{H}: CM(f, \Lambda_{\omega})\to CF(H),
$$
where $f$ is a Morse function on $M$ and the map counts spiked disks connecting critical points of $f$ and orbits of $H$. By base changing and using the action scaling map above we also obtain $
cPSS^{H}_{\Lambda}: CM(f, \Lambda)\to CF(H;\Lambda),
$ so that we have a commutative square of chain maps
	\[\begin{tikzcd}
		CM(f; \Lambda_{\omega}) \arrow{r}{cPSS^H} \arrow[swap]{d} & CF(H) \arrow{d} \\
		 CM(f; \Lambda) \arrow{r}{cPSS^H_{\Lambda}} & CF(H; \Lambda)
	\end{tikzcd}
	\]

Trivially $c_{\Lambda}(\tilde{a}; H)\geq c(a; H)$. For the opposite direction note that for all $n\in \mathbb{Z}$, the maps  $CF^n(H)\to  CF(H;\Lambda)$ are valuation preserving (and hence injective) and $HF^n(H)\to  HF(H;\Lambda)$ are injective. 
	
	 We consider a simple case to illustrate the proof. Suppose that $x=T^{E}\gamma \in CF(H; \Lambda)$ represents $PSS^H_{\Lambda}(\tilde{a})$ and $PSS^H(a)$ is represented by $y$ for a capped orbit $y=[\gamma_{1}, u_{1}]$. By the commutativity of the above square, we have that $T^{E}\gamma $ is homologous to $y_T=T^{\cA_{H}([\gamma_{1}, u_{1}])}\gamma_{1}$ in $CF(H; \Lambda)$. Assume that
	$$
	T^{\cA_{H}([\gamma_{1}, u_{1}])}\gamma_{1}- T^{E}\gamma = dz, \text{ with }z=T^{E_{2}}\gamma_{2}
	$$
	for some $\gamma_{2}$. Then we must have rigid Floer cylinders $u_{2}$ from $\gamma_{2}$ to $\gamma_{1}$, and $u_{3}$ from $\gamma_{2}$ to $\gamma$. We have
	$$
	E_{2}+ E_{top}(u_{2})= \cA_{H}([\gamma_{1}, u_{1}]), \quad E_{2}+ E_{top}(u_{3})= E.
	$$
	We can directly check that $[\gamma, u_{3}-u_{2}+u_{1}]$ is a capped orbit of the same degree as $[\gamma_{1}, u_{1}]$ which represents $PSS^H(a)$ and with action $E$.
	
	We now give the proof. Since $H$ is non-degenerate, we have finitely many one-periodic orbits $\lbrace \gamma_{j}\rbrace_{j=1, \cdots, N}$ in the mod $2$ degree of $a$, that is $\deg(\tilde{a}),$  and $\lbrace \beta_{k}\rbrace_{k=1, \cdots, L}$ in degree $\deg(\tilde{a})-1$. 
%	Note that we can easily characterize the image of $CF(H)\to  CF(H;\Lambda)$. It is a direct sum of $\mathbb{Q}$-subspaces $$\{T^E\gamma_j\mid E= \cA_{H}([\gamma_{j}, u]) \text{ mod }\omega(\pi_2(M))\}$$ and $$\{T^E\beta_k\mid E= \cA_{H}([\beta_k, u]) \text{ mod }\omega(\pi_2(M))\}.$$

	 Let 
	$$
	x= \sum_{j=1}^{N} \sum_{l} x^{l}_{j} T^{E^{l}_{j}} \gamma_{j}, \quad x^{l}_{j}\in \QQ
	$$
	be an element representing $PSS^H_{\Lambda}(\tilde{a})$ with valuation $c_{\Lambda}(\tilde{a}; H)$, and
	$$
	y= \sum_{j=1}^{N}\sum_{i}y_{j}^{i} [\gamma_{j}, u^{i}_{j}], \quad y_{j}^{i}\in \QQ, \quad \lim_{i\to +\infty} \cA_{H}([\gamma_{j}, u_{j}^{i}])= +\infty
	$$
	be a pure degree element representing $PSS^H(a)$. After the base change we get 
	$$
	y_{T}:= \sum_{j=1}^{N}\gamma_{j}(\sum_{i}y_{j}^{i}T^{\cA_{H}([\gamma_{j}, u_{j}^{i}])})
	$$
	such that $x$ and $y_{T}$ are homologous.
	That is, we have another element $z$ with $dz= y_{T}-x$. We write 
	$$
	z= \sum_{k=1}^{L}\sum_{m}z^{m}_{k}T^{E^{m}_{k}}\beta_{k}, \quad z^{m}_{k}\in \QQ.
	$$
	
%	
%	If  $E^{l}_{j}= \cA_{H}([\gamma_{j}, u]) \text{ mod }\omega(\pi_2(M))$ for all $j$ and $l$, we are done. Assume that we are not.
	
%	 Write $y_{T}-x$ as a $\mathbb{Q}$-linear (possibly infinite) combination of $T^E\gamma_j$. 
	 
	 Let $n$ be the degree of $a$. If $rT^E\gamma_j$ with $r\in \mathbb{Q}^*$ satisfies $$E= \cA_{H}([\gamma_{j}, u])$$ for some index $n$ cap $u$ of $\gamma_j$, we call it good; otherwise we call it bad. 
	
	Consider the $\mathbb{Q}$-linear expression of $d(T^E\beta_k)$. The argument in the simple case above shows that either all of its terms are good or they are all bad. Let us call $rT^E\beta_k$ with $r\in \mathbb{Q}^*$ good or bad accordingly as well. Define $z'$ by removing from the $\mathbb{Q}$-linear expression of $z$ all the bad terms. Therefore, $dz'$ is a $\mathbb{Q}$-linear combination of (possibly infinitely many) good terms.
 We define $x'$ by $dz'= y_T-x'.$

	Clearly $x'$ is in the image of $CF^n(H)\to  CF(H;\Lambda)$ (say of $\tilde{y}$) and it represents $PSS_\Lambda(\tilde{a})$. It follows that the valuation of $\tilde{y}$ is at least as large as that of $x$ because the terms in the $\mathbb{Q}$-linear expansion of $x'$ is simply a subset of the terms of $x$. Therefore the valuation of $\tilde{y}$ and hence $c(a; H)$ is at least as large as $c_{\Lambda}(\tilde{a}; H).$

%	Suppose that with respect to the bases $\lbrace \gamma_{j}\rbrace, \lbrace \beta_{k}\rbrace$, the Floer differential is
%	$$
%	d\beta_{k}= \sum_{j, s_{j}} c^{j,k}_{s_{j}}\cdot T^{E_{top}(s_{j})}\cdot \gamma_{j}, \quad c^{j,k}_{s_{j}}\in \QQ.
%	$$
%	Here $s_{j}$ represent the homotopy classes of Floer cylinders from $\beta_{k}$ to $\gamma_{j}$. 
%	
%	
%----------------------	
%	
%	Then for each summand $x^{l}_{j}T^{E^{l}_{j}}\gamma_{j}$ of $x$ we can associate it with a capped orbit
%	$$
%	\sum_{i}\sum_{j'}\sum_{k}\sum_{m} y^{i}_{j}\cdot x^{l}_{j}\cdot z^{m}_{k}\cdot c^{j,k}_{s_{j}}\cdot c^{j',k}_{s_{j'}}\cdot [\gamma_{j}, u^{i}_{j}- v^{j',k}_{s_{j'}}+ v^{j,k}_{s_{j}}],
%	$$
%	where the summation is over the following conditions
%	$$
%	E^{m}_{k}+ E_{top}(v^{j',k}_{s_{j'}})= \cA_{H}([\gamma_{j'}, u_{j'}]), \quad E^{m}_{k}+ E_{top}(v^{j,k}_{s_{j}})= T^{E^{l}_{j}}.
%	$$
%	Finally add up all those capped orbits we get a capped orbit which represents $PSS(a)$ and with action the same as $\val(x)$.\footnote{should have a better proof here.}
%	 
\end{proof}

In the rest of this article we will use the second definition of spectral invariant, but with the first notation $c(a; H)$. It has lots of good properties, which we list here and refer to \cite{Oh, FOOO, EP09} for proofs and more discussions. 

\begin{theorem}\label{t:property}
	Let $a$ be a non-zero class in $QH(M; \Lambda)$, and let $H$ be a non-degenerate Hamiltonian function. The number $c(a; H)$ has the following properties.
	\begin{enumerate}
		\item (Finiteness) $c(a; H)$ is a finite number.
		\item (Valuation shift) $c(xa;H)$= $c(a; H)+ \val(x)$ for any non-zero $x\in\Lambda$.
		\item (Hamiltonian shift) $c(a; H+\lambda(t))= c(a; H)+ \int_{S^{1}}\lambda(t)$ for any function $\lambda(t)$ on $S^{1}$.
		\item (Lipschitz property) $\int_{S^{1}}\min_{M} (H_{1}- H_{2})\leq c(a; H_{1})- c(a; H_{2})\leq \int_{S^{1}} \max_{M} (H_{1}- H_{2})$. In particular, if $H_{1}\geq H_{2}$ then $c(a; H_{1})\geq c(a; H_{2})$.
		\item (Triangle inequality) $c(a_{1}*a_{2}; H_{1}\# H_{2})\geq c(a_{1}; H_{1})+ c(a_{2}; H_{2})$ where $*$ is the quantum product.
		\item (Homotopy invariance) $c(a; H_{1})= c(a; H_{2})$ for any two normalized Hamiltonian functions generating the same $\phi \in \widetilde{\Ham}(M)$.
	\end{enumerate}
\hfill \qedsymbol{}
\end{theorem}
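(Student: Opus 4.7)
The plan is to establish each of the six properties by standard Hamiltonian Floer-theoretic arguments adapted to the $\Lambda$-coefficient conventions fixed above, leveraging the chain-level PSS, continuation, and pair-of-pants structures implicit in the setup.

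For finiteness (1), since $a\neq 0$ and $PSS^H_\Lambda$ is an isomorphism, $PSS^H_\Lambda(a)\neq 0$, so every cycle representative $x$ satisfies $x\neq 0$ and hence $\val(x)<+\infty$, which gives $c(a;H)<+\infty$; boundedness from below follows from Usher's best-representative theorem \cite{U}, which guarantees that the supremum defining $c(a;H)$ is realized by an actual cycle of finite valuation. Properties (2) and (3) are essentially formal. For (2), $PSS^H_\Lambda$ is $\Lambda$-linear, so if $x$ represents $PSS^H_\Lambda(a)$ then $\lambda x$ represents $PSS^H_\Lambda(\lambda a)$ for nonzero $\lambda\in\Lambda$; the identity $\val(\lambda x)=\val(\lambda)+\val(x)$, together with the symmetric estimate using $\lambda^{-1}$, gives the claim. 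For (3), adding a purely time-dependent function $\lambda(t)$ to $H$ leaves the $1$-periodic orbits and Floer cylinders unchanged but shifts the action of every orbit by $\int_{S^1}\lambda(t)\,dt$; the differential, the valuation filtration, and PSS all translate uniformly, yielding the stated identity.

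For the Lipschitz property (4), I would construct a continuation chain map $\Phi:CF(H_1;\Lambda)\to CF(H_2;\Lambda)$ coming from an $s$-dependent interpolation between $H_1$ and $H_2$. The standard topological energy estimate for continuation cylinders bounds the valuation shift of $\Phi$ by $\int_{S^1}\max_M(H_1-H_2)\,dt$, and since $\Phi$ intertwines with PSS on homology, applying it to a best representative of $PSS^{H_1}_\Lambda(a)$ gives the upper bound; the lower bound follows by swapping the roles of $H_1$ and $H_2$. Homotopy invariance (6) is then proved by a refined continuation argument along a family of normalized Hamiltonians generating the fixed element of $\widetilde{\Ham}(M)$: the normalization condition ensures that both Lipschitz bounds vanish along the family, forcing $c(a;H_1)=c(a;H_2)$.

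For the triangle inequality (5), the key input is a $\Lambda$-bilinear chain-level pair-of-pants product $CF(H_1;\Lambda)\otimes CF(H_2;\Lambda)\to CF(H_1\# H_2;\Lambda)$ compatible with the quantum product via PSS on homology. Given best representatives $x_i$ of $PSS^{H_i}_\Lambda(a_i)$ with $\val(x_i)=c(a_i;H_i)$, their product $x_1\ast x_2$ represents $PSS^{H_1\# H_2}_\Lambda(a_1\ast a_2)$, and the non-negativity of the topological energy of a pair-of-pants together with the $T^{E_{top}}$-weighting yields $\val(x_1\ast x_2)\geq\val(x_1)+\val(x_2)$, which is the desired inequality. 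The main obstacle across all six properties is strictly bookkeeping: verifying that the chain-level PSS, continuation, and pair-of-pants maps defined within Pardon's virtual framework interact correctly with the $T^{E_{top}}$-weighting and the degree conventions. Once that is in hand, the estimates above are the standard energy arguments from \cite{Oh, FOOO, EP09}.
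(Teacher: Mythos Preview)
The paper does not prove this theorem at all: it states the six properties, writes ``we list here and refer to \cite{Oh, FOOO, EP09} for proofs and more discussions,'' and closes with a bare \qedsymbol. Your sketch is therefore not competing with any argument in the paper; it is simply filling in what the authors outsourced to the literature, and the outlines you give for (1)--(5) are the standard ones found in those references, correctly adapted to the $\Lambda$-coefficient and $T^{E_{top}}$-weighting conventions of the paper.

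One point worth tightening is your argument for (6). Deducing homotopy invariance purely from the Lipschitz estimate along a path of normalized Hamiltonians is not quite complete as stated: the Lipschitz bounds along an arbitrary normalized path need not vanish pointwise, only their integrals of $\max_M$ and $\min_M$ over the path are controlled, and these do not obviously cancel. The usual proof combines the Lipschitz continuity with the spectrality axiom (that $c(a;H)$ lies in the action spectrum of $H$, a measure-zero set depending only on $\phi\in\widetilde{\Ham}(M)$ for normalized $H$) to conclude constancy along the path. Alternatively one can use a continuation map along a homotopy with vanishing curvature to get an exact filtration-preserving chain equivalence. Either refinement is standard in \cite{Oh, FOOO}, but your sketch as written skips the key spectrality input.
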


By the Lipschitz property, we can extend the definition of spectral numbers to all continuous functions $M\times S^1\to \mathbb{R}$ by $C^0$ approximation. 

\begin{remark}\label{rem:low-semi-spec} Let us also note that by monotone approximation one can define spectral numbers for lower (or upper) semi-continuous functions $M\to \mathbb{R}\cup\{\pm\infty\}$. An important example of a lower semi-continuous function is 
$$
\chi_K(x)=\begin{cases}
			0, & \text{if $x\in K$}\\
            \infty, & \text{otherwise}
		 \end{cases}
$$ for $K$ a closed subset.\end{remark}

Next we recall a Poincar\'e duality property. For two classes $a,b\in QH(M; \Lambda)$ we define
\begin{equation}\label{eq: pairing}
	\Pi(a, b):= \pi\langle a*b, [M]\rangle.
\end{equation}
Here $*$ denotes the quantum product, $\langle\cdot, \cdot\rangle:  H^{*}(M; \Lambda)\times H_{*}(M; \Lambda)\to \Lambda$ is the pairing between cohomology and homology, and $\pi: \Lambda\to \QQ$ is the projection to the constant term.

\begin{lemma}[\cite{EP03}, Lemma 2.2]\label{l:Poincare}
For any $b \in QH(M; \Lambda) \setminus \{0\}$ and smooth $H:M\times S^1\to \mathbb{R}$, we have
\begin{align}\label{eq:PoincareC}
c(b;H)=-\sup\{c(a;\bar{H})| \Pi(a,b) \neq 0\}.
\end{align}
where the Hamiltonian $\bar{H}(x,t):=-H(\phi_H^t(x),t)$ generates $(\phi_H^{t})^{-1}$. 
\hfill \qedsymbol{}
\end{lemma}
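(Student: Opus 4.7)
The plan is to prove this Poincaré-type duality statement by constructing a chain-level pairing between $CF(H;\Lambda)$ and $CF(\bar H;\Lambda)$ that is compatible with both the Floer-theoretic valuations and the quantum Poincaré pairing, and then exploiting its non-degeneracy on the associated graded.

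First, I would set up the chain-level pairing. The Hamiltonian flows $\phi_H^t$ and $\phi_{\bar H}^t$ are inverses, so there is a natural bijection between contractible $1$-periodic orbits $\gamma$ of $H$ and $\bar\gamma(t):=\gamma(-t)$ of $\bar H$. A direct computation using the formulas for $\mathcal{A}_H$ and $\mathcal{A}_{\bar H}$ shows that on capped orbits $\mathcal{A}_H([\gamma,u])+\mathcal{A}_{\bar H}([\bar\gamma,\bar u])=0$ for a suitable time-reversed capping $\bar u$. Applying the base-change $[\gamma,u]\otimes T^E\mapsto T^{\mathcal{A}_H([\gamma,u])+E}\gamma$ from Proposition \ref{p:compareSpectral}, the natural (unweighted) pairing on capped orbits becomes a pairing $\langle\cdot,\cdot\rangle_{CF}\colon CF^*(H;\Lambda)\otimes CF^{2n-*}(\bar H;\Lambda)\to \Lambda$ determined by $\langle \gamma,\bar\gamma'\rangle_{CF}=\pm\delta_{\gamma,\gamma'}$. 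Time-reversal of Floer cylinders for $H$ produces Floer cylinders for $\bar H$, which implies that $\langle\cdot,\cdot\rangle_{CF}$ is a chain map and, via the PSS isomorphisms, computes the quantum pairing $\langle a,b\rangle_{QH}:=\langle a*b,[M]\rangle$.

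Second, I would observe the valuation inequality $\val(\langle x,y\rangle_{CF})\ge \val(x)+\val(y)$, immediate from the definition. This gives the easy direction $(\le)$. Choose, using Usher's best-representative theorem, a cycle $x$ representing $PSS^H(b)$ with $\val(x)=c(b;H)$ and, for any $a$ with $\Pi(a,b)\ne 0$, a cycle $y$ representing $PSS^{\bar H}(a)$ with $\val(y)=c(a;\bar H)$. Since $\Pi(a,b)\ne 0$ means that the coefficient of $T^0$ in $\langle a,b\rangle_{QH}$ is non-zero, $\val(\langle a,b\rangle_{QH})\le 0$, so
\[
0\ge \val(\langle x,y\rangle_{CF})\ge c(b;H)+c(a;\bar H),
\]
giving $c(a;\bar H)\le -c(b;H)$ and hence $\sup\{c(a;\bar H)\mid \Pi(a,b)\ne 0\}\le -c(b;H)$.

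Third, and this is the main obstacle, I would produce an $a$ saturating the bound. Let $x$ be the best representative chosen above, with leading term at valuation $C:=c(b;H)$. The pairing $\langle\cdot,\cdot\rangle_{CF}$ descends to a perfect pairing on the associated graded pieces of the valuation filtration on $HF$, so the class $[x]$ has a dual $[y]\in HF^{2n-*}(\bar H;\Lambda)$ of filtration exactly $-C$ satisfying $\langle[x],[y]\rangle=1$; setting $a:=(PSS^{\bar H})^{-1}[y]$ gives $\langle a,b\rangle_{QH}=1$ and $c(a;\bar H)\ge -C$, with equality in view of the previous paragraph. The delicate point is the "persistence-module" style duality of filtered chain complexes that lets one select a dual cycle with optimal valuation rather than merely valuation $\ge -C-\epsilon$; this is guaranteed by Usher's structural results (Theorems 1.3 and 1.4 of [U]) on singular value decompositions of filtered chain complexes over $\Lambda$. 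This completes the proof, and indeed the argument parallels that of [EP03, Lemma 2.2] up to the base change to $\Lambda$.
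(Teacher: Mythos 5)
You should note at the outset that the paper does not actually prove Lemma~\ref{l:Poincare}: it is quoted from \cite[Lemma 2.2]{EP03}, with the remark that only the sign conventions differ. So there is no internal argument to compare against, and your proposal is in effect a reconstruction of the standard proof from \cite{EP03}. Your setup is the right one: the time-reversal bijection of orbits, the action anti-symmetry $\cA_H([\gamma,u])+\cA_{\bar H}([\bar\gamma,\bar u])=0$, the induced $\Lambda$-valued chain pairing with $\val(\langle x,y\rangle_{CF})\ge \val(x)+\val(y)$, and its compatibility with $\Pi$ via PSS (here it helps to observe that $\langle a*b,[M]\rangle$ is just the classical Poincar\'e pairing, by the fundamental class axiom for three-point invariants, so the required compatibility is the standard PSS/anti-PSS gluing). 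With Usher's best representatives this correctly gives the inequality $\sup\{c(a;\bar H)\mid \Pi(a,b)\neq 0\}\le -c(b;H)$.

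The gap is in the saturation step, which is the entire non-trivial content of the lemma. You assert that the chain pairing ``descends to a perfect pairing on the associated graded pieces of the valuation filtration on $HF$'' and attribute this to \cite[Theorems 1.3 and 1.4]{U}. Those theorems only say that a nonzero Floer homology class has a representative attaining its spectral value — a fact you already used in the easy direction — and they say nothing about duality of filtrations or non-degeneracy on associated graded pieces. Non-degeneracy of the delta-pairing on chain-level generators does not formally imply that the induced valuation (spectral) filtrations on $HF(H;\Lambda)$ and $HF(\bar H;\Lambda)$ are dual to each other: a priori, the dual class $[y]$ of $[x]$ produced by non-degeneracy of the homology pairing could only be representable at strictly smaller valuation than $-c(b;H)$. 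What is needed is a structural result for filtered Floer--Novikov complexes over $\Lambda$ — Usher's duality theorem for filtered Floer--Novikov complexes, or the existence of singular value decompositions as in Usher--Zhang (neither of which is \cite{U} or \cite{U2} as cited), or else the original argument of \cite{EP03}, which runs through the pairing between $HF_{\ge\lambda}(H)$ and the corresponding quotient complex for $\bar H$. With such an input your argument closes, but as written the key direction is asserted rather than proved, which is exactly the point the paper sidesteps by citing \cite{EP03}.
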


The identity \eqref{eq:PoincareC} has the same mathematical content as \cite[Lemma 2.2]{EP03} but is in our sign conventions.

%\begin{corollary}\label{c:Poincare}
%We always have $c([vol_M],H)=- c(1,\bar{H})$.[WRONG]
%\end{corollary}

Given a time-independent smooth function $H$ on $M$ and a non-zero idempotent $a\in QH(M; \Lambda)$, the homogenized spectral invariant is defined as
$$
\mu(a;H):= \lim_{n\to +\infty} \frac{c(a;nH)}{n}.
$$
This limit is well-defined and has several properties. See \cite{EP06}. We note
\begin{equation}\label{l:basicineq}
c(a;H) \le \mu(a;H),
\end{equation} which is immediate from the triangle inequality.

%\begin{lemma}\label{l:basicineq}
%We always have
%\[
%c(1;H) \le \mu(1;H) \le \mu(a;H) 
%\]
%for any non-zero idempotent $a \in QH(M; \Lambda)$.
%%\[
%%c(1;H) \le \mu(1;H) \le \mu([vol_M];H) \le c([vol_M];H)
%%\]
%\end{lemma}
%
%\begin{proof}
%The first inequality follows from the triangle inequality $c(1;H \# H) \ge c(1;H) +c(1;H)$ for all $H$.
%The second inequality follows from the triangle inequality $c(a;H) \ge c(1;H) +c(a;0)=c(1;H)$ for all $H$.
%%The last inequality follows from the triangle inequality (of coproduct) $c([vol_M];H\#H) \le c([vol_M];H) +c([vol_M];H)$ for all $H$.
%\end{proof}

\subsection{Heaviness and superheaviness}\label{ss-heavy}
Let $a\in QH(M; \Lambda)$ denote a non-zero idempotent throughout this section.
\begin{definition}\label{def-heavy}
A compact subset $K \subset (M, \omega)$ is $a$-heavy if $\mu(a;H) \le \max_{K} H$ for all $H \in C^{\infty}( M)$.
It is $a$-superheavy if $\mu(a;H) \ge \min_{K} H$ for all $H \in C^{\infty}(M)$. When $a$ is the unit, we often omit it from the notation.
\end{definition}

\begin{remark}
The original definition of (super)heavy sets in \cite{EP09} uses a homological version of spectral invariants. And there is a duality formula (Section 4.2 in \cite{LZ}) relating them with our cohomological spectral invariants. Sticking to the unit case for simplicity, we have
$$
c(1; H)= -c_{ho}([M]; -H), \quad \text{hence} \quad \mu(1; H)= -\mu_{ho}([M]; -H).
$$
The definition in \cite{EP09} of heaviness is $\mu_{ho}([M];H) \ge \min_K H$ for all $H$, which is equivalent to $\mu(1;H)=-\mu_{ho}([M]; -H) \le - \min_K (-H)=\max_{K} H$ and hence our definition above. The translation for superheaviness is similar. 

%\footnote{In the symplectic ideal value measure paper, they use that $\mu([M],H)=\mu(vol_M,H)$, which is not true if $M$ is not aspherical}
\end{remark}

%\begin{lemma}
%A compact subset $K \subset (M, \omega)$ is heavy if and only if $\mu([vol_M],H) \ge  \min_{K} H$ for all $H \in C^{\infty}( M)$.
%It is superheavy if and only if $\mu(vol_M,H) \le \max_{K} H$ for all $H \in C^{\infty}(M)$.
%\end{lemma}

%\begin{proof}
%By Corollary \ref{c:Poincare}, we have $c([vol_M],H)=- c(1,\bar{H})$.
%Since $\max_{K} H =- \min_{K} \bar{H}$, the result follows.

%\end{proof}

\begin{lemma}[$\mu$-heaviness and $c$-heaviness]\label{l:equivheavy}
A compact subset $K \subset (M, \omega)$ is $a$-heavy if and only if $c(a; H) \le \max_{K} H$ for all $H \in C^{\infty}( M)$.
\end{lemma}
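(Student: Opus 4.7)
The plan is to exploit two simple ingredients: the homogenization limit $\mu(a;H)=\lim_{n\to\infty} c(a;nH)/n$ from the definition, and the basic inequality $c(a;H)\le \mu(a;H)$ from \eqref{l:basicineq}. The equivalence then falls out immediately from scaling on one side and the inequality on the other.

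For the implication $(\Rightarrow)$, assume $K$ is $a$-heavy, i.e.\ $\mu(a;H)\le \max_K H$ for every $H\in C^\infty(M)$. Combining this with \eqref{l:basicineq} gives
\[
c(a;H)\le \mu(a;H)\le \max_K H,
\]
which is the $c$-heaviness condition.

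For the implication $(\Leftarrow)$, assume $c(a;H)\le \max_K H$ for every $H\in C^\infty(M)$. Apply this hypothesis to the function $nH$ for each positive integer $n$; since $\max_K(nH)=n\max_K H$, we obtain
\[
\frac{c(a;nH)}{n}\le \max_K H.
\]
Passing to the limit $n\to\infty$ and using the definition of $\mu(a;H)$ yields $\mu(a;H)\le \max_K H$, so $K$ is $a$-heavy.

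There is essentially no obstacle in this argument — the only subtlety to double-check is that the scaling step is legitimate, namely that $nH$ is in the domain where the spectral invariant and its homogenization are defined with the stated properties (this is immediate from Theorem \ref{t:property} and the $C^0$-extension noted after it), and that the limit exists (part of the definition of $\mu$). Everything else is formal.
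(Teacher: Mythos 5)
Your proof is correct and follows exactly the paper's argument: the forward direction uses the inequality $c(a;H)\le\mu(a;H)$ from \eqref{l:basicineq}, and the converse applies the hypothesis to $nH$, divides by $n$, and passes to the limit defining $\mu$. Nothing further is needed.
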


\begin{proof}
%It suffices to consider the spectral invariant with respect to the identity element.
By the inequality \eqref{l:basicineq}, we have $\mu(a;H) \ge c(a;H)$.
Therefore, if $\mu(a;H) \le \max_{K} H$, then $c(a;H) \le \max_{K} H$.

On the other hand, if $c(a;H) \le \max_{K} H$ for all $H$, then we have $c(a;nH) \le \max_{K} nH=n \max_K H$ for all $H$.
This implies that $\mu(a;H) \le \max_{K} H$ for all $H$.
\end{proof}

\begin{remark}
We cannot use $c(a;H)$ instead of $\mu(a;H)$ to define superheaviness. Consider the unit sphere $S^2\subset \mathbb{R}^3$ with its standard area form and let $z:S^2\to [-1,1]$ be the height function. Let $K\subset S^2$ be the upper hemisphere defined by $z\geq 0$. It is well-known that $K$ is superheavy. Consider the non-degenerate autonomous Hamiltonian $H:=\epsilon z$ with $0<\epsilon \ll 1$. By superheaviness, we have $\mu(1;H) \ge \min_{K} H=0,$ but $$c(1;H)=-\epsilon < \min_{K} H.$$
%More precisely, $\mu(1,H) \ge \min_{K} H$ does not imply $c(1,H) \ge \min_{K} H$.
%%Equivalently even if $\mu(vol_M,H) \le \max_{K} H$, it does not mean that $c(vol_M,H) \le \max_{K} H$.
%Indeed, in $S^2$, the spectral invariant of a Hamiltonian supported in a displaceable disk does not have to be $0$ but the homogenized spectral invariant is $0$.
\end{remark}

Now we recall two criteria for heaviness and superheaviness.

\begin{lemma}\label{l: criteria}
	Let $K$ be a compact set in $M$.
	\begin{enumerate}
		\item $K$ is $a$-heavy if and only if for any non-negative function $H$ on $M$ with $H\lvert_{K}=0$, we have that $\mu(a; H)=0$;
		\item $K$ is $a$-superheavy if and only if for any non-positive function $H$ on $M$ with $H\lvert_{K}=0$, we have that $\mu(a; H)=0$.
	\end{enumerate}
\end{lemma}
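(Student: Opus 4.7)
The plan is to reduce both parts to two basic properties of the homogenized spectral invariant $\mu(a;\cdot)$: monotonicity ($H_1\le H_2$ implies $\mu(a;H_1)\le \mu(a;H_2)$) and the constant-shift formula $\mu(a;H+c)=\mu(a;H)+c$ for a real constant $c$. Both follow by passing $n\to\infty$ in $\mu(a;H)=\lim_n c(a;nH)/n$ from parts (3) and (4) of Theorem~\ref{t:property}, and together they give $\mu(a;0)=0$ (specialize the shift to $H=0$ and divide by $n$). Applying monotonicity to the pairs $(H,0)$ and $(0,H)$ then yields the a priori two-sided bound
$$
\min_M H \;\le\; \mu(a;H) \;\le\; \max_M H
$$
valid for every $H\in C^\infty(M)$.

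With these tools in hand, the forward direction of (1) is immediate: if $K$ is $a$-heavy and $H\ge 0$ vanishes on $K$, then $\max_K H=0$ forces $\mu(a;H)\le 0$ by the heaviness inequality, while $\min_M H\ge 0$ forces $\mu(a;H)\ge 0$ by the a priori bound, so $\mu(a;H)=0$. The forward direction of (2) is entirely analogous, combining the upper bound $\mu(a;H)\le\max_M H\le 0$ with the lower bound $\mu(a;H)\ge \min_K H=0$ coming from superheaviness.

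For the reverse direction of (1), given an arbitrary $H\in C^\infty(M)$, set $c:=\max_K H$ and pick a smooth function $\phi:\RR\to \RR_{\ge 0}$ with $\phi\equiv 0$ on $(-\infty,0]$ and $\phi(x)\ge x$ on all of $\RR$. Then $G:=\phi(H-c)$ is smooth, non-negative, vanishes on $K$ (since $H\le c$ on $K$), and satisfies $G\ge H-c$ pointwise. By hypothesis $\mu(a;G)=0$, and monotonicity combined with the shift formula gives
$$
0=\mu(a;G)\;\ge\;\mu(a;H-c)\;=\;\mu(a;H)-c,
$$
i.e.\ $\mu(a;H)\le \max_K H$, so $K$ is $a$-heavy. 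For the reverse direction of (2) we instead set $c:=\min_K H$ and use a smooth $\phi:\RR\to\RR_{\le 0}$ with $\phi\equiv 0$ on $[0,\infty)$ and $\phi(x)\le x$; $G:=\phi(H-c)$ is non-positive, vanishes on $K$, and lies below $H-c$, so the same one-line computation yields $\mu(a;H)\ge \min_K H$.

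There is no substantive obstacle in this argument. The only mildly technical point is the construction of $\phi$, which is entirely elementary; passing the properties of $c$ in Theorem~\ref{t:property} to $\mu$ is automatic from the defining limit, and the reverse-direction trick of replacing a general $H$ by $\phi(H-\max_K H)$ is precisely the standard device that converts a bound on a restricted class of Hamiltonians into a bound on all of them.
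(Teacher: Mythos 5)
Your overall route is fine and is genuinely different from the paper's treatment: the paper simply cites \cite[Proposition 4.1]{EP09} (with a remark about the sign conventions), whereas you derive the lemma directly from Theorem \ref{t:property} by first promoting monotonicity and the constant-shift property from $c$ to $\mu$, obtaining $\min_M H\le\mu(a;H)\le\max_M H$, and then handling the forward directions and the reduction of a general $H$ to a cut-off function. The forward directions and the general scheme of the reverse directions are correct.

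However, the one step you dismiss as ``entirely elementary'' is, as stated, impossible, and this is a genuine (if easily repaired) gap. There is no smooth $\phi:\RR\to\RR_{\ge 0}$ with $\phi\equiv 0$ on $(-\infty,0]$ and $\phi(x)\ge x$ for all $x$: smoothness forces every derivative of $\phi$ to vanish at $0$, so $\phi(x)=o(x)$ as $x\to 0^+$ and the inequality $\phi(x)\ge x$ fails for small positive $x$. The problem is not just with this particular construction: if $\max_K H$ is attained at a point $p\in K$ with $dH(p)\neq 0$, then \emph{no} smooth non-negative $G$ with $G|_K=0$ can satisfy $G\ge H-\max_K H$, since $G(p)=0$ and $G\ge 0$ force $dG(p)=0$, hence $G=O(\operatorname{dist}(\cdot,p)^2)$ near $p$, while $H-\max_K H$ is of linear size at nearby points where $H>\max_K H$. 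The standard fix is to allow an $\epsilon$ of room: for $\epsilon>0$ choose a smooth $\phi_\epsilon\ge 0$ with $\phi_\epsilon\equiv 0$ on $(-\infty,0]$ and $\phi_\epsilon(x)\ge x-\epsilon$ for all $x$ (equivalently, work with $c=\max_K H+\epsilon$ and $\phi\equiv 0$ on $(-\infty,-\epsilon]$, $\phi(x)\ge\max(x,0)$). Then $G_\epsilon:=\phi_\epsilon(H-\max_K H)$ is smooth, non-negative, vanishes on $K$, and satisfies $G_\epsilon\ge H-\max_K H-\epsilon$, so your one-line computation gives $\mu(a;H)\le\max_K H+\epsilon$, and letting $\epsilon\to 0$ yields heaviness; the symmetric modification repairs the reverse direction of part (2). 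With this adjustment your proof is complete and self-contained.
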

\begin{proof}
	This is Proposition 4.1 in \cite{EP09}. Note that we have a different sign convention than the original proof. So the positivity and negativity are swapped.
\end{proof}

\begin{remark}
Recall Remark \ref{rem:low-semi-spec}. Using monotonicity and the triangle inequality it is easy to show that for $K$ compact, $c(1;\chi_K)$ is equal to either $0$ or $\infty$. $K$ is heavy precisely when the former possibility is true.

One might be tempted to characterize superheavy sets by $c(1;-\chi_K)=0,$ but this equality in fact only holds for $K=M$. Choosing a non-positive $C^2$-small function that is zero on $K$ and negative somewhere proves this claim. The value of $c(1;-\chi_K)$ could also be a negative real number.
\end{remark}

\subsection{Relative symplectic cohomology}

Now we review the construction of the relative symplectic cohomology, which was introduced in \cite{Var, Var2021}. The construction could be seen as a categorification of the definition of $c(a;\chi_K)$ from Remark \ref{rem:low-semi-spec} using monotone approximation.

Let $M$ be a closed symplectic manifold and $K$ be a compact subset of $M$. Consider $H_{1}\leq H_{2}\leq \cdots$ a monotone sequence of non-degenerate one-periodic Hamiltonian functions $H_{n}: M\times S^{1}\to \RR$ such that
\begin{enumerate}
	\item $H_{n}$ is negative on $K\times S^1$;
	\item for $x\in K$, $H_{n}(x,t)$ converges to $0$ as $n\to \infty$;
	\item for  $x\notin K$, $H_{n}(x,t)$ converges to $+\infty$ as $n\to \infty$.
\end{enumerate}
Then we choose a monotone homotopy of Hamiltonian functions connecting $H_{n}$ and $H_{n+1}$ for each $n$. The sequence of Hamiltonian functions together with chosen monotone homotopies will be called an acceleration datum for $K$.

Now given an acceleration datum for $K$, we denote the differential of $CF(H_{n}; \Lambda)$ by $d_{n}: CF(H_{n}; \Lambda)\to CF(H_{n}; \Lambda)$ and the continuation maps by $h_{n}: CF(H_{n}; \Lambda)\to CF(H_{n+1,}; \Lambda)$.

Both Floer differentials and continuation maps are defined by counting suitable cylinders $u: \RR\times S^{1}\to M$ satisfying the Floer equation. We emphasize that both $d_{n}$ and $h_{n}$ are weighted by powers of the Novikov parameter $T^{E_{top}(u)}$.  Since our homotopies are chosen to be monotone, this topological energy $E_{top}(u)$ is always non-negative for any $u$ in the definitions of $d_{i}$ and $h_{i}$. 

We have a one-ray of Floer complexes
$$
\cC:= CF(H_{1}; \Lambda)\to CF(H_{2}; \Lambda)\to \cdots
$$
and we can form a new $\mathbb{Z}/2$-graded complex $tel(\cC)=\oplus_{n}(CF(H_{n}; \Lambda)\oplus CF(H_{n}; \Lambda)[1])$ by using the telescope construction, see Section 3.7 \cite{AS} and Section 2.1 \cite{Var2021}. The $\Lambda$-linear differential $\delta$ is defined as follows, if $x_{n}\in CF^k(H_{n}; \Lambda)$ then
$$
\delta x_{n}= (-1)^{k}d_{n}x_{n} \in CF^{k+1}(H_{n}; \Lambda),  
$$
and if $x'_{n}\in CF^k(H_{n}; \Lambda)[1]$ then

\begin{align}\label{eq:delta}
	\delta x'_{n}&= ((-1)^{k}x'_{n}, (-1)^{k+1}d_{n}x'_{n}, (-1)^{k+1}h_{n}x'_{n})\\
	&\in CF^{k}(H_{n}; \Lambda)\oplus CF^{k+1}(H_{n}; \Lambda)[1]\oplus CF^k(H_{n+1}; \Lambda).
\end{align}

This telescope comes with a min valution induced by the valuations on $CF(H_{n}; \Lambda)$. Then we have an induced  non-Archimedean norm and an induced metric
\begin{align}\label{eq:norm}
\norm{x}_{\val}:= e^{-\val(x)}, \quad d_{\val}(x, y):= e^{-\val(x-y)}= \norm{x-y}_{\val}.
\end{align}
Finally we complete $tel(\cC)$ using this norm. The resulting complete normed vector space is written as $\widehat{tel}(\cC)$. More concretely, we are adding infinite sums $\sum_{i=1}^{+\infty}(x_{n}+ x'_{n})$ to $tel(\cC)$, where the valuations of $x_{n}$ and $x'_{n}$ go to positive infinity. Since the differential on $tel(\cC)$ is continuous (in fact contracting), it induces an differential on $\widehat{tel}(\cC)$. The relative symplectic cohomology of $K$ in $M$ is defined as the $\delta$-homology
$$
SH_{M}^*(K; \Lambda):= H^*(\widehat{tel}(\cC), \delta).
$$

\begin{remark}
	The above construction also works over the Novikov ring $\Lambda_{\ge 0}$. Let $CF(H_{n}; \Lambda_{\ge 0})$ be the free $\Lambda_{\ge 0}$-module generated by one-periodic orbits of $H_{n}$. By our choice of monotone homotopies, all the Floer differentials and continuation maps are weighted by non-negative powers of $T$. Hence the resulting $T$-adically completed telescope $\widehat{tel}(\cC; \Lambda_{\ge 0})$ is a $\Lambda_{\ge 0}$-module with a differential $\delta$. We call the $\delta$-homology
	$$
	SH_{M}(K; \Lambda_{\ge 0}):= H(\widehat{tel}(\cC; \Lambda_{\ge 0}), \delta)
	$$
	the relative symplectic cohomology over $\Lambda_{\ge 0}$.
	
	Note that the $T$-adic completion of a free $\Lambda_{\ge 0}$-module $A$ tensored with $\Lambda$ can be canonically identified with the completion of   $A\otimes_{\Lambda_{\ge 0}}\Lambda$ with respect to the induced non-Archimedean norm (see e.g.  Equation \eqref{eq-norm}). Since $\Lambda$ is flat over $\Lambda_{\ge 0}$, we have that
	$$
	SH_{M}(K; \Lambda)\cong SH_{M}(K; \Lambda_{\ge 0})\otimes_{\Lambda_{\ge 0}}\Lambda.
	$$
	On the other hand, $SH_{M}(K; \Lambda_{\ge 0})$ does have torsion and carries more quantitative information compared with $SH_{M}(K; \Lambda)$. We will often use the $\Lambda_{\geq 0}$ version in the arguments below.
\end{remark}

In \cite{Var2021}, it is proved that $SH_{M}(K; \Lambda)$ and $SH_{M}(K; \Lambda_{\ge 0})$ are independent of choices. Moreover, they enjoy several good properties, notably a Mayer-Vietoris sequence.

\begin{proposition}[Proposition 3.3.3, \cite{Var2021}]
	Invariance and restriction maps:
	\begin{enumerate}
		\item  For two different acceleration data $H_{s}$ and $H_{s}'$, we have that $H(\widehat{tel}(\cC; \Lambda_{\ge 0}))\cong H(\widehat{tel}(\cC'; \Lambda_{\ge 0}))$ canonically. Therefore we simply denote this invariant by $SH_{M}(K; \Lambda_{\ge 0})$.
		
		\item Let $\phi: M\rightarrow M$ be a symplectomorphism. There exists a canonical isomorphism $SH_{M}(K; \Lambda_{\ge 0})=SH_{M}(\phi(K); \Lambda_{\ge 0})$ by relabeling an acceleration data with the map $\phi$.
		
		\item For $K\subset K'$, there exists canonical restriction maps $r: SH_{M}(K'; \Lambda_{\ge 0})\rightarrow SH_{M}(K; \Lambda_{\ge 0})$. These satisfy the presheaf property.
	\end{enumerate}
\hfill \qedsymbol{}
\end{proposition}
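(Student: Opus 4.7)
The plan is to prove the three parts separately, with part (1) being the main content and parts (2) and (3) following with minor modifications.

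For part (1), given two acceleration data $\cC=(H_n,G_n)$ and $\cC'=(H_n',G_n')$ for $K$, where $G_n$ denotes the chosen monotone homotopy from $H_n$ to $H_{n+1}$, the idea is to compare them via interleaving. Since $H_n\lvert_K\to 0$ and $H_n\lvert_{M\setminus K}\to +\infty$, and similarly for $H_n'$, a diagonal argument produces an increasing function $m:\NN\to\NN$ with $H_n\leq H_{m(n)}'$ pointwise. Pick monotone homotopies from $H_n$ to $H_{m(n)}'$; counting cylinders for these yields continuation maps $\phi_n:CF(H_n;\Lambda)\to CF(H_{m(n)}';\Lambda)$. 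Counting one-parameter families of cylinders interpolating between the two natural concatenations of these homotopies gives the chain homotopies required to assemble the $\phi_n$ into a map of telescopes $\Phi:\widehat{tel}(\cC)\to\widehat{tel}(\cC')$. I would then show $\Phi$ induces an isomorphism on homology by constructing $\Psi:\widehat{tel}(\cC')\to\widehat{tel}(\cC)$ symmetrically and producing chain homotopies $\Phi\circ\Psi\simeq\mathrm{id}$ and $\Psi\circ\Phi\simeq\mathrm{id}$ from two-parameter families of cylinders; independence of the choices of $m$ and of the intermediate homotopies at the level of homology follows from analogous TQFT-type arguments.

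Parts (2) and (3) reduce to similar techniques. For (2), a symplectomorphism $\phi:M\to M$ pulls back acceleration data, one-periodic orbits, actions, and chosen families of almost complex structures, inducing an isomorphism between the corresponding completed telescopes entry by entry; post-composition with the invariance isomorphism from (1) makes the resulting map independent of the acceleration datum. For (3), given $K\subset K'$, any acceleration datum $(H_n')$ for $K'$ can be dominated pointwise by an acceleration datum $(H_n)$ for $K$ after perhaps passing to a subsequence, since $\chi_K\geq \chi_{K'}$. Monotone continuation maps $CF(H_n';\Lambda)\to CF(H_n;\Lambda)$ then assemble into a chain map $\widehat{tel}(\cC')\to\widehat{tel}(\cC)$ exactly as in part (1), inducing the restriction $r$. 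The presheaf property for $K\subset K'\subset K''$ follows because the composition of two dominations is again a domination, and continuation maps composed along a chain of dominations are chain homotopic to the single continuation map along the total domination; invariance under choice of dominating datum then gives the identity $r_{K\subset K'}\circ r_{K'\subset K''}=r_{K\subset K''}$ on homology.

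The main obstacle is the coherence of the many auxiliary choices (virtual perturbation data, monotone homotopies, homotopies between homotopies, and higher homotopies) required to define these maps at the chain level and compare them. The cleanest implementation, carried out in \cite{Var2021}, phrases an acceleration datum as a cofinal subdiagram of the directed system of admissible monotone approximations to $\chi_K$ from below, so that the completed telescope computes a homotopy colimit over this system; invariance and functoriality of restriction then reduce to standard facts about homotopy colimits along cofinal maps. A careful verification that the chosen virtual perturbation framework (here, Pardon's implicit atlases) produces the requisite chain-level homotopies, and not merely equalities in homology, is where the bulk of the technical work lies, and is essential for the telescope to behave well under completion with respect to the valuation norm \eqref{eq:norm}.
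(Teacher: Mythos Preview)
The paper does not give its own proof of this proposition: it is quoted verbatim from \cite{Var2021} (Proposition 3.3.3 there) and closed with a bare \qedsymbol. Your sketch is a reasonable outline of the argument actually carried out in \cite{Var2021}, and you yourself correctly identify that the clean implementation---packaging acceleration data as cofinal subdiagrams and reducing everything to homotopy colimit functoriality, with the chain-level coherence supplied by the chosen virtual framework---is what \cite{Var2021} does; so there is nothing to compare against in the present paper.
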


\begin{theorem}
	Global section and the displaceability:
	\begin{enumerate}
		\item (Theorem 1.3.1, \cite{Var}) $SH_{M}^{*}(M; \Lambda_{\ge 0})\cong H^{*}(M; \ZZ)\otimes_{\ZZ} \Lambda_{> 0}$ as $\ZZ/2$-graded $\Lambda_{\ge 0}$-modules.
		\item (Theorem 1.2, \cite{TVar}) If $K$ is stably displaceable then $SH_{M}^{*}(K; \Lambda)= 0$.
	\end{enumerate}\qed
\end{theorem}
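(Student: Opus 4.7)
My plan is to prove the contrapositive: assuming $SH_M(K;\Lambda) \neq 0$, I will exhibit a cohomology class with finite valuation, which by definition represents a nonzero element of $SH^{red}_M(K;\Lambda)$. The natural candidate is the image $1_K$ of the unit under the restriction $r: SH_M(M;\Lambda) \to SH_M(K;\Lambda)$. Since $SH_M(M;\Lambda) \cong H^*(M;\Lambda)$ is a unital ring with nonzero unit and $r$ is a unital ring map, the element $1_K$ is the unit of the target; because the target is nonzero by hypothesis, $1_K \neq 0$.

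The next step is to bound $\val(1_K)$ in terms of spectral invariants. Fix a monotone acceleration datum $\{H_n\}$ for $K$ in which each $H_n$ is nonpositive on $K$. By construction, a chain-level representative of $1_K$ in the completed telescope $\widehat{tel}(\mathcal{C})$ can be assembled from the PSS representatives of $1 \in QH(M;\Lambda)$ inside $CF(H_n;\Lambda)$; the best such representative has valuation $c(1;H_n)$ by the definition of the spectral invariant together with Usher's best-representative result. Passing to the limit along the telescope, I expect to identify $\val(1_K)$ with the monotone spectral value $c(1;\chi_K) = \lim_n c(1;H_n)$ of Remark \ref{rem:low-semi-spec}; at the very least one obtains $\val(1_K) \leq \limsup_n c(1;H_n)$, which is all that is needed.

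By Theorem \ref{t: main1}(A), the hypothesis $SH_M(K;\Lambda) \neq 0$ forces $K$ to be heavy. Lemma \ref{l:equivheavy} then gives $c(1;H_n) \leq \max_K H_n \leq 0$ for every $n$, so $\val(1_K) \leq 0 < \infty$. Combined with $1_K \neq 0$, this shows that $1_K$ descends to a nonzero class in $SH^{red}_M(K;\Lambda)$, establishing the contrapositive.

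The main technical point I would need to nail down is the identification of $\val(1_K)$ with the $\limsup$ of $c(1;H_n)$. This is a continuity/Mittag-Leffler statement for the valuation filtration on the one-ray, and it should follow from the completeness of $\widehat{tel}(\mathcal{C})$ together with the same chain-level machinery used to prove the ``heavy $\Rightarrow$ SH-visible'' half of Theorem \ref{t: main1}(A). In practice I expect this proposition to be a short corollary of that half of the main theorem, requiring only that one tracks the valuation of the PSS representative of the unit, rather than merely its non-triviality.
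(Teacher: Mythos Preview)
Your proposal does not address the stated theorem at all. The theorem you were asked to prove is a pair of results quoted from \cite{Var} and \cite{TVar}; it carries a bare \qed in the paper and is not proved here. What you have actually written is an attempted proof of Proposition~\ref{prop- red-vanishing}, namely that $SH^{red}_M(K;\Lambda)=0$ implies $SH_M(K;\Lambda)=0$.

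Evaluated as a proof of Proposition~\ref{prop- red-vanishing}, your argument differs from the paper's and contains a real gap. The paper argues in one line: if $SH^{red}_M(K;\Lambda)=0$ then the unit $1_K$ has infinite valuation, hence a chain representative of positive valuation in the complex $\mathcal{C}_K\otimes\Lambda$ of Theorem~\ref{thm-AGV}; Lemma~\ref{l: algebaric lemma} then forces $1_K=0$. Your route instead invokes Theorem~\ref{t: main1}(A) to deduce heaviness, but the implication ``SH-visible $\Rightarrow$ heavy'' (Corollary~\ref{co: heavy1}) already rests on the same algebraic lemma, so the detour through spectral invariants buys nothing.

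More importantly, the step you flag as ``the main technical point'' is not a formality but the whole content of the proposition. You need $\val(1_K)\le \limsup_n c(1;H_n)$, an \emph{upper} bound on the valuation of every telescope representative of $1_K$. The direction that comes for free is the opposite one: placing a best representative of $PSS^{H_n}_\Lambda(1)$ in the $n$-th slot of the telescope shows $\val(1_K)\ge c(1;H_n)$ for every $n$. Controlling all representatives from above is exactly what Lemma~\ref{l: algebaric lemma} accomplishes (any representative of a nonzero idempotent has valuation $\le 0$), and the machinery you point to in Theorem~\ref{t: heavy2} and Remark~\ref{re: infinity} concerns primitives of a fixed representative, not alternative representatives of the class. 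Without an independent argument for your inequality, the proof is circular.
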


\begin{definition}\label{def: Poisson}
	(Definition 1.22, \cite{DGPZ})
	Let $K_{1}, K_{2}$ be two compact sets in $M$. We say they are Poisson commuting if there are two Poisson commuting functions $f_{1}, f_{2}\in C^{\infty}(M; [0,1])$ such that $K_{1}=f^{-1}_{1}(0), K_{2}=f^{-1}_{2}(0)$.
\end{definition}

\begin{theorem}
	Let $K_{1}, K_{2}$ be two compact sets such that they are Poisson commuting. Then there is a long exact sequence
	$$
	\begin{aligned}
		\cdots&\to SH_{M}^{*}(K_{1}\cup K_{2}; \Lambda_{\ge 0})\to SH_{M}^{*}(K_{1}; \Lambda_{\ge 0})\oplus SH_{M}^{*}(K_{2}; \Lambda_{\ge 0})\to SH_{M}^{*}(K_{1}\cap K_{2}; \Lambda_{\ge 0})\to\\
		&\to SH_{M}^{*+1}(K_{1}\cup K_{2}; \Lambda_{\ge 0})\to\cdots.
	\end{aligned}
	$$
\hfill \qedsymbol{}
\end{theorem}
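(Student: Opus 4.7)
The plan is as follows. Assume $SH^{red}_M(K;\Lambda)=0$; by definition of the valuation-$\infty$ subspace, every class in $SH_M(K;\Lambda)$ has valuation $+\infty$. The proof will reduce to showing that $K$ is not heavy, for then Theorem~\ref{t: main1}(A) yields $SH_M(K;\Lambda)=0$.

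Consider the canonical ``unit'' class $1_K\in SH_M(K;\Lambda)$, defined as the image of $1\in QH(M;\Lambda)$ under the composition of the PSS isomorphism $QH(M;\Lambda)\cong SH_M(M;\Lambda)$ with the restriction $SH_M(M;\Lambda)\to SH_M(K;\Lambda)$. The technical heart of the argument is the identity
\begin{equation*}
\val(1_K)=c(1;\chi_K),
\end{equation*}
where $\chi_K$ is the lower semi-continuous function of Remark~\ref{rem:low-semi-spec}. The lower bound $\val(1_K)\geq c(1;\chi_K)$ is immediate: for an acceleration datum $H_n\nearrow \chi_K$, Usher's best-representative theorem supplies chain-level PSS cocycles $x_n\in CF(H_n;\Lambda)$ with $\val(x_n)=c(1;H_n)$, and each $x_n$ represents $1_K$ in the completed telescope, so taking $n\to\infty$ gives the bound via monotone approximation. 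Granting the identity, the hypothesis $SH^{red}_M(K;\Lambda)=0$ forces $\val(1_K)=+\infty$, hence $c(1;\chi_K)=+\infty$. Since $c(1;\chi_K)\in\{0,+\infty\}$ by the dichotomy noted in the remark following Lemma~\ref{l: criteria}, with $0$ corresponding to heaviness (Lemma~\ref{l:equivheavy}), it follows that $K$ is not heavy; Theorem~\ref{t: main1}(A) then delivers $SH_M(K;\Lambda)=0$.

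The main obstacle is the reverse inequality $\val(1_K)\leq c(1;\chi_K)$. The completed telescope admits infinite-sum coboundary refinements which could a priori inflate the valuation of a representative beyond the finite-stage spectral numbers. I would handle this via the Poincar\'e-duality identity of Lemma~\ref{l:Poincare}: if some telescope cocycle representative $y$ of $1_K$ had valuation strictly exceeding $c(1;H_n)$ uniformly in $n$, then pairing $y$ with an opposite-Hamiltonian cocycle realising the volume class---using the pair-of-pants product compatible with the telescope differential (the chain-level product from \cite{AGV} already invoked for Theorem~\ref{t: main1}(A))---would yield a spectral number for $\bar{H}_n$ exceeding the Poincar\'e-dual bound $-c(1;H_n)$, a contradiction. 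Setting up this telescope-level pairing compatibly with the action/valuation filtration is the delicate step; once done, the key identity and hence the proposition follow.
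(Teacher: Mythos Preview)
Your proposal does not address the stated theorem at all. The statement is the Mayer--Vietoris long exact sequence for relative symplectic cohomology of Poisson commuting compact sets $K_1, K_2$; your write-up instead argues that $SH^{red}_M(K;\Lambda)=0$ implies $SH_M(K;\Lambda)=0$, which is Proposition~\ref{prop- red-vanishing}, an entirely different assertion involving a single compact set $K$ and no Mayer--Vietoris structure. Nothing in your argument mentions $K_1$, $K_2$, restriction maps, or the connecting homomorphism, so as a proof of the stated theorem it is vacuous. (In the paper the Mayer--Vietoris theorem is not proved from scratch but is quoted from \cite{Var2021}.)

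As a side remark, even viewed as an attempt at Proposition~\ref{prop- red-vanishing} your route is both more complicated and incomplete. The paper's proof is a two-line application of the algebraic Lemma~\ref{l: algebaric lemma}: the hypothesis forces the unit $e_K$ to have infinite valuation, so in particular it has a representative of positive valuation in the chain model $\mathcal{C}_K\otimes_{\Lambda_{\ge 0}}\Lambda$ of Theorem~\ref{thm-AGV}, and the lemma then forces $e_K=0$. Your plan instead tries to establish the identity $\val(1_K)=c(1;\chi_K)$ and then invoke Theorem~\ref{t: main1}(A). The reverse inequality $\val(1_K)\le c(1;\chi_K)$ is exactly the hard direction, and your sketch for it (a telescope-level Poincar\'e pairing controlling valuations) is not actually carried out; moreover you are already willing to use the chain-level product of \cite{AGV} via Theorem~\ref{t: main1}(A), so there is no economy in avoiding the direct application of Lemma~\ref{l: algebaric lemma}.
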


We refer to Theorem 1.3.4 \cite{Var2021} for the most general form of this theorem.

\begin{theorem}[\cite{TVar}]
Relative symplectic cohomology $SH_{M}(K; \Lambda)$ has a canonical product structure and unit $e_{K}$ which makes it a graded commutative unital $\Lambda$-algebra. Restriction maps respect the unit and the product structure.
\hfill \qedsymbol{}
\end{theorem}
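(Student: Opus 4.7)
The plan is to construct the product via pair-of-pants Floer operations and promote it through the telescope construction to the completion. Given non-degenerate Hamiltonians $H_{1}, H_{2}, H_{3}$ arranged so that the pair-of-pants Floer equation carries non-negative topological energy (e.g.\ with $H_{3}$ dominating $H_{1}\# H_{2}$ in the appropriate asymptotic sense), counting rigid solutions with inputs at $H_{1}, H_{2}$ and output at $H_{3}$, each weighted by $T^{E_{top}}$, defines a chain map
\[
\mu: CF(H_{1}; \Lambda) \otimes_{\Lambda} CF(H_{2}; \Lambda) \to CF(H_{3}; \Lambda).
\]
Non-negativity of $E_{top}$ ensures all weights lie in $\Lambda_{\ge 0}$, so convergence in the Novikov sense is automatic.

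Next, fix an acceleration datum $\cC = \{H_{n}\}$ for $K$, and choose an auxiliary acceleration datum $\cC' = \{H_{n}'\}$ for $K$ with $H_{n}' \ge 2 H_{n}$ for all $n$, together with consistent interpolating monotone homotopies. The pair-of-pants gives $\mu_{n} : CF(H_{n}; \Lambda) \otimes CF(H_{n}; \Lambda) \to CF(H_{n}'; \Lambda)$ at each level. A parametrized moduli space argument produces chain homotopies showing that the $\mu_{n}$ commute up to coherent homotopy with the continuation maps in both rays; these higher homotopies assemble into a chain map of telescopes $\mu_{\cC}: tel(\cC) \otimes tel(\cC) \to tel(\cC')$. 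Because each $\mu_{n}$ is weighted by non-negative powers of $T$, the map $\mu_{\cC}$ is contracting with respect to the min-valuation metric, hence extends uniquely to the completions, yielding a chain map $\widehat{tel}(\cC) \widehat{\otimes}_{\Lambda} \widehat{tel}(\cC) \to \widehat{tel}(\cC')$. The invariance theorem identifies the cohomologies of $\widehat{tel}(\cC')$ and $\widehat{tel}(\cC)$ canonically, producing the product on $SH_{M}(K; \Lambda)$.

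To verify the algebra axioms, graded commutativity follows from the $\mathbb{Z}/2$ symmetry of the pair of pants exchanging the two inputs, together with the standard chain homotopy induced by rotating the pants. Associativity comes from considering moduli of four-punctured spheres interpolating between the two ways of composing pants. The unit $e_{K}$ is defined as the restriction to $SH_{M}(K; \Lambda)$ of the image of $1 \in QH(M; \Lambda)$ under PSS, and the unit axiom follows from a standard degeneration in which one input end of the pair of pants is capped off by a PSS half-cylinder. Independence of all auxiliary choices (the datum $\cC'$, homotopies, almost complex structures, virtual perturbations) is established via the usual homotopy-of-homotopies machinery.

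For the restriction property, given $K \subset K'$ I would choose acceleration data $\{H_{n}\}$ for $K$ and $\{G_{n}\}$ for $K'$ with $G_{n} \le H_{n}$ (so that the induced continuation maps realize the restriction $SH_{M}(K'; \Lambda) \to SH_{M}(K; \Lambda)$) and arrange auxiliary data $\{H_{n}'\}$ dominating both $2H_{n}$ and $2G_{n}$. Naturality of pair-of-pants counts under monotone continuation then yields that restriction is a unital ring map. The main obstacle is the coherent construction of higher homotopies making $\mu_{n}$ and $\mu_{n+1}$ compatible through the continuation maps; building these systematically and checking they respect the $E_{top} \ge 0$ filtration (so everything survives completion) is the central technical point, and is precisely what chain-level frameworks like \cite{AGV} are designed to handle.
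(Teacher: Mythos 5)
The paper itself does not prove this statement -- it is quoted from \cite{TVar} -- so the comparison is really with the construction in that reference and with the chain-level refinement from \cite{AGV} that the present paper imports (Theorem \ref{thm-AGV}). Your outline (pair-of-pants maps $CF(H_n;\Lambda)\otimes CF(H_n;\Lambda)\to CF(H'_n;\Lambda)$ with $H'_n\geq 2H_n$ so that $E_{top}\geq 0$, compatibility with continuation maps, passage to completed telescopes, invariance to identify $H(\widehat{tel}(\cC'))$ with $SH_M(K;\Lambda)$, unit via PSS, restriction via monotone comparison of acceleration data) is the standard route and is in the spirit of what \cite{TVar} actually does for the homology-level algebra structure asserted here.

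The genuine soft spot is the sentence in which the level-wise products and the homotopies ``assemble into a chain map of telescopes.'' This is asserted rather than constructed, and it is exactly where the work lies: $tel(\cC)\otimes tel(\cC)$ has mixed-level components $CF(H_n;\Lambda)\otimes CF(H_m;\Lambda)$ with $n\neq m$ (your $\mu_n$ are only defined on the diagonal), and its shifted-tensor-shifted components force you to supply not only primary homotopies between $\mu$'s composed with continuation maps but also secondary homotopies (homotopies of homotopies), together with a choice of target level in $\cC'$ for every component, all weighted by non-negative powers of $T$ so that the map survives completion. That this choice-and-coherence problem is not innocuous is precisely the content of the paper's remark following Theorem \ref{thm-AGV}: the authors state they do not know how to produce a chain-level, Leibniz-satisfying product directly on the telescope model, which is why they invoke the framed $E_2$ machinery of \cite{AGV} for chain-level purposes. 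For the homology-level statement you are proving, mapping into a different telescope $tel(\cC')$ and then using invariance does sidestep the strictness issue, and the needed coherences are of bounded (second) order, so the plan is viable; but as written the central step is delegated to ``frameworks like \cite{AGV}'' rather than proved, so the proposal should be regarded as a correct outline of the known construction rather than a self-contained proof. The remaining points (commutativity from the input-swapping symmetry, associativity from four-punctured spheres, unit axiom from capping an input by a PSS plane, restriction as a unital algebra map via $G_n\leq H_n$) are standard and fine at the level of detail given.
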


%We also have canonical PSS-type maps $$PSS_K: QH(M;\Lambda)\to SH_{M}(K; \Lambda),$$ which respect the product structure. These are defined using the homotopy commutative diagram (with prescribed homotopies, i.e. a map of $1$-rays):
%
%$$
%	\begin{tikzcd}
%	\ldots\ar[r]&CM(f, \Lambda)\ar[r]\ar[d]\ar[dr]&CM(f, \Lambda)\ar[r]\ar[d]\ar[dr]&CM(f, \Lambda)\ar[r]\ar[d]&\ldots\\
%	\ldots\ar[r]&CF(H_{n-1}; \Lambda)\ar[r]&CF(H_{n}; \Lambda)\ar[r]&CF(H_{n+1}; \Lambda)\ar[r]&\ldots
%	\end{tikzcd}
%	$$
%with the vertical maps the chain level PSS maps as above and the upper horizontal maps being the identity. 

The techniques of \cite{TVar} can also be used to show the following result.

\begin{theorem}
There are canonical unital $\Lambda$-algebra maps $$PSS_K: QH(M;\Lambda)\to SH_{M}(K; \Lambda),$$
given by the composition $QH(M;\Lambda)\to SH_{M}(M; \Lambda)\to SH_{M}(K; \Lambda)$, which are compatible with the restriction maps.
\hfill \qedsymbol{}
\end{theorem}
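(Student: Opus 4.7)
The plan is to construct the map $PSS_K$ as the composition indicated and verify the three properties (unitality, multiplicativity, compatibility with restrictions) by lifting everything to the telescope model.

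First, I would build a chain-level PSS map. Fix an acceleration datum $(H_n)$ for $K$ together with a Morse function $f$ on $M$. For each $n$, one has the chain-level PSS map $cPSS^{H_n}_\Lambda \colon CM(f;\Lambda) \to CF(H_n;\Lambda)$, defined by counting spiked Floer half-cylinders. As $n$ varies, the standard construction of continuation-homotopies between spiked moduli problems produces chain homotopies $K_n \colon CM(f;\Lambda) \to CF(H_{n+1};\Lambda)[-1]$ with $h_n \circ cPSS^{H_n}_\Lambda - cPSS^{H_{n+1}}_\Lambda = d_{n+1} K_n + K_n d_{CM}$. This data assembles, via the standard telescope formula, into a chain map $CM(f;\Lambda) \to tel(\mathcal{C})$ that is continuous for the valuation, hence extends to $\widehat{tel}(\mathcal{C})$. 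Passing to homology and using $H^*(CM(f;\Lambda)) \cong QH(M;\Lambda)$ (as a vector space) gives a canonical $\Lambda$-linear map $QH(M;\Lambda)\to SH_M(M;\Lambda)$. Independence of the choices of $f$, $cPSS^{H_n}_\Lambda$ and the homotopies $K_n$ follows from a standard homotopy-of-homotopies argument, using that the telescope complex is invariant (at the level of homology) under such choices.

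Next I would check unitality and multiplicativity. Precomposing the construction with the $K=M$ acceleration datum realizes $QH(M;\Lambda)\to SH_M(M;\Lambda)$; composing with the restriction $SH_M(M;\Lambda)\to SH_M(K;\Lambda)$ (a unital ring map by \cite{TVar}) produces $PSS_K$. Unitality amounts to the well-known fact that $cPSS^{H_n}_\Lambda$ sends the Morse cocycle representing $1\in H^0(M;\Lambda)$ to the PSS image of $1$, which represents $PSS^{H_n}_\Lambda(1)$, together with the observation that these elements are telescope-compatible up to the chain homotopies produced above; under the isomorphism $SH_M(M;\Lambda)\to SH_M(K;\Lambda)$ this matches $e_K$ by the unital property of the restriction. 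For multiplicativity I would adapt the pair-of-pants argument of \cite{TVar}: the product on $SH_M(M;\Lambda)$ is defined via a telescope model of moduli of pairs of pants, and one introduces spiked pair-of-pants moduli spaces with two Morse-flow inputs and one Hamiltonian output. The usual codimension-one boundary analysis (where one of the Morse spikes shrinks, producing quantum product on one side, or a pair-of-pants degeneration on the other) yields the required chain homotopy, globalized over the acceleration datum exactly as above.

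Finally, compatibility with restriction maps is essentially automatic from naturality. Given $K\subset K'$, one chooses the acceleration datum for $K'$ to dominate that for $K$; the telescope continuation map realizes $SH_M(K';\Lambda)\to SH_M(K;\Lambda)$, and the chain-level PSS maps into both telescopes are compatible, up to canonical chain homotopies of the same spiked-moduli origin. The main technical obstacle I anticipate is the book-keeping for the telescope-level chain homotopies that express multiplicativity, i.e.\ combining the pair-of-pants-with-spikes moduli spaces with continuation data for three Hamiltonians $H_n$, $H_m$, $H_{n+m}$ simultaneously; this is where the recently developed chain-level product structure of \cite{AGV}, cited in the paper, is most convenient, but the argument can also be done by hand using the classical PSS compatibility theorems.
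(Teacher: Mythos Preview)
The paper gives no proof of this theorem: it simply records that ``the techniques of \cite{TVar} can also be used to show the following result'' and closes with a \qedsymbol. Your outline is a reasonable and essentially correct expansion of what such a proof entails, and it is in the spirit of \cite{TVar}.

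One organizational remark: since the statement \emph{defines} $PSS_K$ as the composition $QH(M;\Lambda)\to SH_M(M;\Lambda)\to SH_M(K;\Lambda)$, and the preceding theorem already asserts that restriction maps are unital $\Lambda$-algebra maps satisfying the presheaf property, the compatibility with restrictions and the unitality/multiplicativity of the second arrow come for free. The only genuine content is therefore that the first arrow $QH(M;\Lambda)\to SH_M(M;\Lambda)$ is a unital $\Lambda$-algebra map, and this is exactly where your spiked-pair-of-pants argument (following \cite{TVar}) is needed. Your direct construction of a chain-level PSS map into $\widehat{tel}(\mathcal{C})$ for a general $K$ is correct but slightly more than the statement requires; it does, however, have the virtue of giving a concrete chain-level model for $PSS_K$, which is what is actually used later in the paper (e.g.\ in the proofs of Theorems~\ref{t: heavy1} and~\ref{t: heavy2}).
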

%
%It of course follows that $PSS_K(1)=e_K$. 
The following is also standard.

\begin{theorem}
For every compact $K\subset M$ and $H:M\times S^1\to\mathbb{R} $ a smooth function which is negative on $K$, there are canonical $\Lambda_{\geq 0}$-module maps $$\kappa: HF(H; \Lambda_{\geq 0})\to SH_M^*(K;\Lambda_{\geq 0}).$$ These maps satisfy $$\kappa_\Lambda\circ PSS^H_\Lambda=PSS_K,$$ where $\kappa_\Lambda=\kappa\otimes_{\Lambda_{\geq 0}}\Lambda.$
\hfill \qedsymbol{}
\end{theorem}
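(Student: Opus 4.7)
The plan is to define $\kappa$ directly from the telescope model by using $H$ as the first level of an acceleration datum for $K$, and then to identify $PSS_K$ with the ``level-1 PSS into the telescope'' to get the compatibility essentially for free.

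\medskip

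\noindent\textbf{Construction of $\kappa$.} Since $H$ is negative on $K \times S^1$, I would first (after a generic perturbation if needed to ensure non-degeneracy) extend $H$ to a monotone acceleration datum $H = H_1 \leq H_2 \leq \cdots$ for $K$ by bumping up outside $K$, and choose compatible monotone homotopies and virtual perturbation data. This produces a one-ray $\cC = CF(H_1;\Lambda_{\geq 0}) \to CF(H_2;\Lambda_{\geq 0}) \to \cdots$ whose completed telescope computes $SH_M^*(K;\Lambda_{\geq 0})$. Inspecting the telescope differential in \eqref{eq:delta}, the first summand $CF(H_1;\Lambda_{\geq 0}) = CF(H;\Lambda_{\geq 0})$ embeds as a $\Lambda_{\geq 0}$-linear sub-complex, since $\delta$ restricted to it is just $d_1$. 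I would define $\kappa$ to be the map on homology induced by the composition $CF(H;\Lambda_{\geq 0}) \hookrightarrow tel(\cC;\Lambda_{\geq 0}) \hookrightarrow \widehat{tel}(\cC;\Lambda_{\geq 0})$.

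\medskip

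\noindent\textbf{Canonicality.} To see $\kappa$ is well defined, I would show independence of the extension $(H_n, \text{homotopies}, \text{perturbation data})_{n \geq 2}$ (with $H_1 = H$ held fixed) via the standard zigzag argument used to prove invariance of $SH_M^*(K;\Lambda_{\geq 0})$: given two such extensions, construct a third dominating acceleration datum and continuation chain maps of telescopes from each original one to the dominating one. Because $H_1 = H$ in all three, the level-1 comparison chain map is the identity, so both sub-complex inclusions of $CF(H;\Lambda_{\geq 0})$ are sent to chain-homotopic maps into the dominating telescope. One also needs to check independence under continuation to a different non-degenerate Hamiltonian $H'$ with $H' \geq H$ on $M \times S^1$ (and more generally under continuation through a monotone homotopy), but this is handled the same way by taking a common extension of $H$ and $H'$ to a single acceleration datum.

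\medskip

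\noindent\textbf{PSS compatibility.} The strategy is to prove the more general statement that $PSS_K = \iota_n \circ PSS^{H_n}_\Lambda$ for every $n$, where $\iota_n: HF(H_n;\Lambda) \to SH_M^*(K;\Lambda)$ is the level-$n$ inclusion. Granted this, applied to our acceleration datum with $n=1$ and $H_1 = H$, one obtains $PSS_K = \iota_1 \circ PSS^H_\Lambda = \kappa_\Lambda \circ PSS^H_\Lambda$ immediately. To prove the general claim, pick an acceleration datum $\{G_n\}$ for $M$ with $G_n \leq H_n$ for all $n$ (possible since $G_n \to 0$ on $M$ while $H_n \to +\infty$ off $K$), so that the restriction $r: SH_M(M;\Lambda) \to SH_M(K;\Lambda)$ is induced by term-wise continuation maps $CF(G_n;\Lambda) \to CF(H_n;\Lambda)$. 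Then $PSS_M = \iota_n^M \circ PSS^{G_n}_\Lambda$ (any level computes $PSS$ into the colimit), and composing with $r$ gives $r \circ PSS_M = \iota_n \circ \mathrm{cont}_{G_n,H_n} \circ PSS^{G_n}_\Lambda = \iota_n \circ PSS^{H_n}_\Lambda$ by commutativity of $PSS$ with continuation at the homology level.

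\medskip

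\noindent\textbf{Main obstacle.} The essential difficulty is not constructing $\kappa$, which is almost tautological, but assembling the PSS compatibility cleanly. Concretely, one must justify that $r \circ PSS_M = \iota_n \circ PSS^{H_n}_\Lambda$ for any acceleration datum for $K$ — i.e., that restriction, continuation, and PSS all mutually commute at the relevant level of structure, and that this is independent of the auxiliary acceleration datum $\{G_n\}$ for $M$. This reduces to standard Floer-theoretic naturality, but verifying it carefully requires bookkeeping chain-level PSS maps $cPSS^{H_n}$ together with the homotopies implementing continuation compatibility, and ensuring $\Lambda_{\geq 0}$-linearity (positivity of topological energies) throughout.
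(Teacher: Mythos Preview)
The paper does not actually prove this statement: it is introduced with ``The following is also standard'' and carries only a \qedsymbol{}. Your proposal is precisely the standard argument one would give, so there is nothing to compare against and your outline is correct.

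Two minor remarks. First, with the paper's sign convention \eqref{eq:delta} the telescope differential restricted to the unshifted copy $CF(H_1;\Lambda_{\geq 0})$ is $(-1)^k d_1$, not literally $d_1$; this does not affect the induced map on homology, but your sentence ``$\delta$ restricted to it is just $d_1$'' is off by that sign. Second, the theorem as stated does not assume $H$ is non-degenerate, while your construction begins from $H_1=H$ in an acceleration datum (which must be non-degenerate); your parenthetical about perturbing is the right fix, and in the paper's applications (e.g.\ Theorem~\ref{thm-AGV}) non-degeneracy is indeed assumed.
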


\begin{definition}\label{def: a-visible}
	Let $K$ be a compact set in $M$. Letting $a\in QH(M; \Lambda)$ denote a non-zero element, we make the following definition.
	\begin{enumerate}
		\item $K$ is called SH-$a$-visible if $PSS_K(a)\neq 0$, otherwise it is called SH-$a$-invisible.
		\item $K$ is called SH-$a$-full if every compact set contained in $M-K$ is SH-$a$-invisible.
		\item $K$ is called nearly SH-$a$-visible if any compact domain containing $K$ in its interior is SH-$a$-visible.
	\end{enumerate}
	
When $a$ is the unit, we omit it from the notation.
\end{definition}

By unitality, SH-$a$-visiblity is of course equivalent to $PSS_K(a)\cdot SH_{M}(K; \Lambda)\neq 0,$ and in particular SH-visibility is equivalent to $SH_{M}(K; \Lambda)\neq 0.$ When $a$ is an idempotent, so is $PSS_K(a)$. Therefore, $PSS_K(a)\cdot SH_{M}(K; \Lambda)\subset SH_{M}(K; \Lambda)$ is an ideal, which is also on its own an algebra with unit $PSS_K(a)$. Our results in this paper will only concern the case that $a$ is an idempotent.

Let us finally note that for an idempotent $A\in SH_{M}(K; \Lambda)$, we have the following finer Mayer-Vietoris sequence (under the same assumptions):

$$
	\begin{aligned}
		\cdots&\to A\cdot SH_{M}^{*}(K_{1}\cup K_{2}; \Lambda)\to A\cdot SH_{M}^{*}(K_{1}; \Lambda)\oplus A\cdot SH_{M}^{*}(K_{2}; \Lambda)\to A\cdot SH_{M}^{*}(K_{1}\cap K_{2}; \Lambda)\to\\
		&\to A\cdot SH_{M}^{*+1}(K_{1}\cup K_{2}; \Lambda)\to\cdots.
	\end{aligned}
	$$ See Remark 1.8 of \cite{TVar} for a detailed discussion.

\subsection{Symplectic ideal-valued quasi-measures}
Let $K$ be a compact set of a closed symplectic manifold $M$. Dickstein-Ganor-Polterovich-Zapolsky \cite{DGPZ} defined the quantum cohomology ideal-valued quasi-measure of $K$ as
\begin{equation}
	\tau(K):= \bigcap_{K\subset U} \ker(r: SH_{M}(M; \Lambda)\rightarrow SH_{M}(M-U; \Lambda))
\end{equation}
where $U$ runs over all open sets containing $K$.

\begin{definition}
	 If $\tau(K)\neq 0$, then we say $K$ is SH-heavy.
\end{definition}

%Our results show that SH-visibility of $K$ is equivalent to heaviness. Therefore we will focus on that notion and only make remarks about the relationship to SH-heaviness - leaving further discussion to other works.

\subsection{Chain level product structure}

Let $\Bbbk$ be a commutative ring.
Let us call a $\mathbb{Z}/2$-graded $\Bbbk$-algebra, not necessarily associative or commutative, equipped with a differential satisfying the graded Leibniz rule a \textit{$\Bbbk$-chain complex with product structure}.

Using the results of \cite{AGV}, the following theorem is immediate.

\begin{theorem}\label{thm-AGV}
For every compact $K\subset M$, we can construct a $\Lambda_{\geq 0}$-chain complex $\mathcal{C}_K$ with product structure whose underlying $\Lambda_{\geq 0}$-module is torsion free and $T$-adically complete such that 

\begin{enumerate}
\item There are preferred isomorphisms of $\mathbb{Z}/2$-graded $\Lambda_{\geq 0}$-algebras $$\iota_K:SH_M^*(K;\Lambda_{\geq 0})\to H^*(\mathcal{C}_K).$$
\item Let $H:M\times S^1\to\mathbb{R} $ be a non-degenerate smooth function which is negative on $K$. Then there is a $\Lambda_{\geq 0}$-chain map $$CF(H; \Lambda_{\geq 0})\to \mathcal{C}_K$$ such that $$\begin{tikzcd}[column sep=small]
 \arrow{r}{\kappa}  \arrow{rd}
 HF(H; \Lambda_{\geq 0}) & SH_M^*(K;\Lambda_{\geq 0}) \arrow{d}{\iota_K} \\
    & H^*(\mathcal{C}_K )
\end{tikzcd}$$ commutes.
\end{enumerate} 
\end{theorem}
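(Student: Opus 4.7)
The plan is to read off $\mathcal{C}_K$ directly from the chain-level construction in \cite{AGV}, taking care to record only the minimal structure we actually need (a product satisfying Leibniz, plus compatibility with PSS) and to verify the two claimed properties by unwinding definitions. Concretely, I would fix an acceleration datum $H_1\leq H_2\leq\cdots$ for $K$ as in Section 2.3, together with a choice of monotone homotopies, and start from the $T$-adically completed telescope $\widehat{tel}(\mathcal{C};\Lambda_{\geq 0})$ whose homology is $SH_M^*(K;\Lambda_{\geq 0})$ by definition. This underlying $\Lambda_{\geq 0}$-module is a completion of a direct sum of free modules on capped (or uncapped, in the $\Lambda$ presentation) orbits, hence torsion free and $T$-adically complete.

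Next I would invoke the $L_\infty$/$A_\infty$-level structure constructed in \cite{AGV}; for our purposes the relevant output is a $\Lambda_{\geq 0}$-bilinear product $\mu^2$ on $\widehat{tel}(\mathcal{C};\Lambda_{\geq 0})$ built from counts of pair-of-pants configurations compatible with acceleration data, satisfying the graded Leibniz rule with respect to the telescope differential $\delta$, and inducing the canonical unital graded-commutative product on $SH_M^*(K;\Lambda_{\geq 0})$ from \cite{TVar}. Setting $\mathcal{C}_K:=(\widehat{tel}(\mathcal{C};\Lambda_{\geq 0}),\delta,\mu^2)$ gives a $\Lambda_{\geq 0}$-chain complex with product structure; associativity and commutativity need only hold up to homotopy at the chain level, which is all the definition asks. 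The preferred isomorphism $\iota_K$ is then tautologically the identification $SH_M^*(K;\Lambda_{\geq 0})=H^*(\widehat{tel}(\mathcal{C};\Lambda_{\geq 0}),\delta)$, and the fact that it is an algebra map is the content of the compatibility with the $\mu^2$ constructed in \cite{AGV}.

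For part (2), given a non-degenerate $H$ which is negative on $K$, I would produce the chain map $CF(H;\Lambda_{\geq 0})\to \mathcal{C}_K$ as the composition of the canonical continuation map $CF(H;\Lambda_{\geq 0})\to CF(H_N;\Lambda_{\geq 0})$ (for $N$ large enough that $H\leq H_N$, or through a direct one-ray refinement inserting $H$ into the acceleration datum) with the structural inclusion $CF(H_N;\Lambda_{\geq 0})\hookrightarrow \widehat{tel}(\mathcal{C};\Lambda_{\geq 0})$ into one level of the telescope. The commutativity of the displayed triangle is then just the statement, already present in \cite{Var2021} and the PSS-compatibility discussion before the theorem, that the map $\kappa: HF(H;\Lambda_{\geq 0})\to SH_M^*(K;\Lambda_{\geq 0})$ is precisely induced on cohomology by this composition.

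The only genuinely non-trivial input is the existence of a chain-level product with the correct properties; everything else is bookkeeping with telescopes and continuation maps. The main obstacle I anticipate is that \cite{AGV} builds a richer structure (an $L_\infty$- or $A_\infty$-algebra up to the coherence needed for the Maurer--Cartan package) while we only want the shadow $(\delta,\mu^2)$; I would therefore be careful to state explicitly which ingredients of \cite{AGV} are used, and to verify by a direct comparison of the chain-level definitions that their induced product on cohomology coincides with the Tonkonog--Varolgunes product on $SH_M^*(K;\Lambda_{\geq 0})$ (rather than with some competing candidate). Once this identification is made, all remaining claims — torsion-freeness, $T$-adic completeness, unitality of $\iota_K$, and the PSS triangle — follow immediately from the construction.
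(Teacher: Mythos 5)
There is a genuine gap, and it sits exactly at the step you call ``the only genuinely non-trivial input.'' You propose to take $\mathcal{C}_K$ to be the completed telescope $\widehat{tel}(\mathcal{C};\Lambda_{\geq 0})$ itself, equipped with a $\Lambda_{\geq 0}$-bilinear product $\mu^2$ ``built from counts of pair-of-pants configurations compatible with acceleration data,'' satisfying the Leibniz rule with respect to the telescope differential. No such construction exists in \cite{AGV} or elsewhere: a chain-level product on the telescope with the Leibniz rule, inducing the correct product on homology and involving only two-input moduli problems, is precisely the open question raised in the remark immediately following the theorem in this paper. The basic obstruction is already visible when you try to multiply an element of $CF(H_n;\Lambda_{\geq 0})$ with one of $CF(H_m;\Lambda_{\geq 0})$: the output could live in any $CF(H_k;\Lambda_{\geq 0})$ with $H_k$ sufficiently large, and there is no preferred choice making Leibniz hold on the nose. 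What \cite{AGV} actually supplies is a different, much larger chain model: the paper's proof sets $\mathcal{C}_K:=SC^*_{M,f\Mbar^{\RR}_0}(K)$ (Theorem 1.4 of \cite{AGV}), defines the product from a closed degree-$0$ generator of $H_0(f\Mbar^{\RR}_{0,2+1};\Lambda_{\geq 0})$, and then quotes Theorems 1.8 and 1.9 of \cite{AGV} for the isomorphism $\iota_K$ and the compatibility with $CF(H;\Lambda_{\geq 0})$ and $\kappa$ in part (2). So the theorem is essentially a citation of that big model, not a structure on the telescope.

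One could try to salvage your route via homotopy transfer (Corollary 1.13 of \cite{AGV}), which does abstractly equip the telescope with an $A_\infty$-structure whose $\mu^2$ satisfies Leibniz; but this requires extra, non-explicit data (a one-sided homotopy inverse to the quasi-isomorphism from the telescope to the big model, whose existence is only proved model-theoretically), it is not the geometric pair-of-pants construction you describe, and you would then still have to re-verify the chain-level compatibility in part (2) through the transferred structure rather than obtaining it from the structural inclusion $CF(H_N;\Lambda_{\geq 0})\hookrightarrow\widehat{tel}(\mathcal{C};\Lambda_{\geq 0})$. Two smaller points: for part (2) the Hamiltonian $H$ is arbitrary (negative on $K$), so you need an argument that it can be dominated by, or inserted into, an acceleration datum over $\Lambda_{\geq 0}$ with monotone homotopies, which you only gesture at; and your insistence that the chain-level product induce the Tonkonog--Varolgunes product is stronger than needed --- as the paper notes, it suffices that the composition $QH(M;\Lambda)\to HF(H;\Lambda)\to H^*(\mathcal{C}_K\otimes_{\Lambda_{\geq 0}}\Lambda)$ be an algebra map, which follows from the AGV package.
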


\begin{proof}
We define $$\mathcal{C}_K^*:=SC^*_{M,f\Mbar^{\RR}_0}(K)$$ as in Theorem 1.4 of \cite{AGV}. The product comes from an arbitrary closed element of $C_0(f\Mbar^{\RR}_{0,2+1};\Lambda_{\geq 0})$ which generates $H_0(f\Mbar^{\RR}_{0,2+1};\Lambda_{\geq 0})\simeq \Lambda_{\geq 0}$, see the discussion surrounding Equation (1.3) in \cite{AGV}. Then Theorem 1.8 and 1.9 therein prove the remaining statements.
\end{proof}

\begin{remark}
Actually, we do not even need $\iota_K$ to respect the product structures for our application. All we need is that the composition $$QH(M; \Lambda)\to HF(H; \Lambda) \to H^*(\mathcal{C}_K\otimes_{\Lambda_{\geq 0}}\Lambda )$$ respects the product structures.
Theorem \ref{thm-AGV} implies this composition respects the product structures because it equals to
$$QH(M; \Lambda)\to SH_M^*(K;\Lambda) \to H^*(\mathcal{C}_K\otimes_{\Lambda_{\geq 0}}\Lambda )$$
which is a composition of algebra maps.
\end{remark}

\begin{remark}
It should be possible to construct a product directly on the telescope model over $\Lambda_{\geq 0}$ (and hence the completed telescope). We know this a posteriori because using homotopy transfer results as in Corollary 1.13 of \cite{AGV} one can equip it with an $A_\infty$-structure (and under a characteristic $0$ assumption with a $BV_\infty$-structure). We stress that this also involves choosing extra data: for example, a one sided homotopy inverse to the quasi-isomorphism from the telescope to the big model. The existence of this data is proven using model theoretic arguments in \cite{AGV} without an explicit construction.

To show the difficulty let us try to multiply an element from $CF(H_n; \Lambda_{\geq 0})$ with an element from $CF(H_m; \Lambda_{\geq 0})$ in the telescope (the degrees unshifted copies). The product could plausibly live in any $CF(H_k; \Lambda_{\geq 0})$ with $H_k$ sufficiently larger than $H_n$ and $H_m$ (see Definition 2.18 of \cite{AGV} for details). Yet, which one? Or should it have more than one component perhaps? We do not know how to proceed with a direct approach like this. We end with a question: is there a product on the telescope model over $\Lambda_{\geq 0}$ which satisfies the Leibniz rule and induces the correct homology level product; and moreover, only involves moduli problems with at most two inputs?
\end{remark}

\section{Proof of results}

\subsection{An algebraic lemma}
We first state an algebraic lemma, which will play a key role in the proof of our main theorem. The construction of an algebraic structure on a suitable chain model of the relative symplectic cohomology satisfying this lemma is in \cite{AGV}, and what we need is Theorem \ref{thm-AGV}. 
%It will be summarized somewhere.

Let $(A, d)$ be a $\Lambda$-chain complex with a product structure. This implies the existence of a graded product on $H(A)$, the cohomology of $A$. Assume that there is a map $\val: A\to \RR\cup \lbrace +\infty\rbrace$ such that $\val^{-1}(\infty)=0$ \footnote{We warn the reader that for the induced valuation on $H^*(A)$ only the weaker $\val(0)=\infty$ is necessarily true.} and for $c\in \Lambda$ and $x,y\in A$:
\begin{enumerate}
	\item $\val (x\cdot y)\geq \val(x) +\val(y)$;
	\item $\val (dx)\geq \val(x)$;
	\item $\val (cx)= \val(c)+\val(x)$;
	\item $\val (x+y)\geq \min\lbrace \val(x), \val(y)\rbrace$.	
\end{enumerate}
Moreover we assume that $A$ is complete with respect to the non-archimedean norm induced by $\val$. 

A $\Lambda_{\geq 0}$-chain complex $\mathcal{C}$ with product structure whose underlying $\Lambda_{\geq 0}$-module is $T$-adically complete gives such a structure on $\mathcal{C}\otimes_{\Lambda_{\geq 0}}\Lambda. $ Here the valuation on $\mathcal{C}\otimes_{\Lambda_{\geq 0}}\Lambda$ is defined by \begin{equation}\label{eq-norm} val(x):=-\inf\{r\in \mathbb{R}\mid T^rx\in \iota(C)\}, \end{equation}where $\iota: C\to C\otimes \Lambda$ is the natural map. 

We define $xyz:= x\cdot(y\cdot z)$. And $x^{k}$ should be understood similarly for any positive integer $k$.

\begin{lemma}\label{l: algebaric lemma}
	Let $(A, d, \val)$ be as above. Let $a$ be a non-zero idempotent in $H(A)$. Then any degree $0$ chain level representative of $a$ has non-positive valuation.
\end{lemma}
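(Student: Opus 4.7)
The plan is a proof by contradiction. Suppose some closed representative $x \in A^0$ of $a$ has $\val(x) = \epsilon > 0$; I will then construct an explicit primitive for $x$, contradicting $a \neq 0$.

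From $[x \cdot x] = a \cdot a = a = [x]$ one picks a chain $b \in A$ with $db = x - x \cdot x$. Define $b_1 := b$ and recursively $b_{n+1} := b + x \cdot b_n$. A short induction using the graded Leibniz rule together with $dx=0$ and $|x|=0$ (so that $d(x \cdot c) = x \cdot dc$ for every $c$) should show
\[
x^{n+1} = x - d b_n \qquad \text{for all } n \geq 1,
\]
where $x^{n+1}$ follows the right-associative convention $x \cdot (x \cdot (\cdots \cdot x))$ introduced just before the lemma. Indeed, assuming $x^{n+1} = x - db_n$, one computes $x^{n+2} = x \cdot x^{n+1} = x \cdot x - x \cdot db_n = (x - db) - d(x \cdot b_n) = x - d(b + x \cdot b_n) = x - db_{n+1}$.

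The next step is to pass to the limit. By bilinearity, $b_{n+1} - b_n = x \cdot (b_n - b_{n-1})$, and property (1) of the valuation then yields inductively $\val(b_{n+1} - b_n) \geq n\epsilon + \val(b)$. Since $\epsilon > 0$, the sequence $(b_n)$ is Cauchy with respect to the non-archimedean norm $\|\cdot\|_{\val}$, hence converges to some $B \in A$ by completeness. Likewise, $\val(x^{n+1}) \geq (n+1)\epsilon \to \infty$, so $x^{n+1} \to 0$. Property (2) makes $d$ non-expanding and hence continuous, so taking the limit in $x^{n+1} = x - db_n$ yields $x = dB$, i.e.\ $a = [x] = 0$, contradicting $a \neq 0$.

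The main point to be careful about is that the product on $A$ is not assumed to be associative, so one cannot freely rewrite $x \cdot (x \cdot b)$ as $(x \cdot x) \cdot b$, etc. The recursive definition $b_{n+1} := b + x \cdot b_n$ combined with the right-associative convention for $x^{n+1}$ is chosen precisely to sidestep this; the computation above only uses $\Lambda$-bilinearity of the product, the graded Leibniz rule, and $dx = 0$, so no associativity of the product is ever invoked.
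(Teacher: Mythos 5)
Your proof is correct and follows essentially the same route as the paper: both arguments build the geometric-series primitive $\sum_{k\ge 0} x^{k}\cdot b$ (right-associated, so that only left multiplication by the closed degree-$0$ element is used and associativity is never needed) and use $\val(x)>0$ together with completeness to conclude that the representative is exact, contradicting $a\neq 0$. Your version merely organizes the same idea through partial sums $b_{n}$ and an explicit Cauchy/continuity argument, where the paper sums the telescoping identity $y^{k}-y^{k+1}=d(y^{k-1}\alpha)$ formally and notes convergence.
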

\begin{proof}
	Pick a closed homogeneous element $y$ with $[y]=a$ and $\deg(y)=0$. Then we have that $[y\cdot y]= [y]\cdot [y] =[y]$. So there exist $\alpha\in A$ such that
	$$
	y-y\cdot y= d\alpha.
	$$
	Multiply $y$, from the left, to both sides of the equation we get
	$$
	y\cdot y- y\cdot (y\cdot y)= y\cdot d\alpha= d(y\cdot \alpha).
	$$ 
	Multiply $y$, from the left, to this equation we get
	$$
	y\cdot (y\cdot y)- y\cdot (y\cdot (y\cdot y))= y\cdot (y\cdot d\alpha)= d(y\cdot (y\cdot \alpha)).
	$$
	Then we can repeat this process to get
	$$
	y^{k}- y^{k+1}= d(y^{k-1}\alpha), \quad \forall k\geq 1.
	$$
	Formally summing over $k$ we get
	$$
	y=\sum_{k=0}^{+\infty} (y^{k+1}-y^{k+2})= d\sum_{k=0}^{+\infty} y^{k}\alpha.
	$$
	If $\val(y)>0$, then the term $y^{k}\alpha$ has valuation going to infinity, as $k$ goes to infinity. Hence $\sum_{k=0}^{+\infty} y^{k}\alpha$ is convergent. This implies that $[y]=0$, a contradiction. 
%	
%	Generally,\footnote{do we need this case?} if $y$ is closed with $[y]=a$ but not homogeneous in degree, we can break it into a sum of degree-zero part and the other part: $y=y_{0}+ y'$. Then $y_{0}$ is also a chain level representative for $a$. Since there is no degree-zero part in $y'$, we have $\val(y)= \min\lbrace \val(y_{0}), \val(y')\rbrace$. Hence $\val(y)>0$ implies $\val(y_{0})>0$. And we can use the above process for $y_{0}$ to get a contradiction.
\end{proof}

In fact, we can prove the more general result below by the same proof.

\begin{lemma}\label{l: finer algebaric lemma}
	Let $(A, d, \val)$ be as above. Let $a, b\in H(A)$ satisfy $ba=\lambda a$ with $\lambda\in \Lambda^*$ and assume that $b$ has a representative with valuation larger than $val(\lambda).$ Then $a=0$.
\hfill \qedsymbol{}
\end{lemma}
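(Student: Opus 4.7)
The plan is to mimic the telescoping argument of Lemma \ref{l: algebaric lemma}, replacing the contracting element $y$ used there by the rescaled element $c := \lambda^{-1}\tilde{b}$, with $\tilde{b}$ a suitable representative of $b$, and letting it act on a representative of $a$ rather than on itself.

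Without loss of generality I would assume $a$ is homogeneous, handling its homogeneous components separately if necessary. The relation $ba=\lambda a$ then forces only the degree-zero component of $b$ to contribute, so by replacing $b$ with its degree-zero part and taking the degree-zero piece of the given representative (still closed, with valuation $\ge \val(\tilde b) > \val(\lambda)$), I may assume that $b$ is homogeneous of degree $0$ with a closed representative $\tilde{b}$ satisfying $\val(\tilde{b})>\val(\lambda)$. Picking any closed representative $\tilde{a}$ of $a$, the identity $ba=\lambda a$ in $H(A)$ furnishes $\alpha\in A$ with
\[
\tilde{b}\cdot\tilde{a}-\lambda\tilde{a}=d\alpha.
\]
Using $\lambda\in\Lambda^{*}$, set $c:=\lambda^{-1}\tilde{b}$ and $\beta:=\lambda^{-1}\alpha$; then $c$ is closed of degree $0$, $\val(c)=\val(\tilde{b})-\val(\lambda)>0$, and $c\cdot\tilde{a}-\tilde{a}=d\beta$.

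Next I would iterate by applying $c\cdot(-)$ to both sides. Since $c$ is closed of degree $0$, the Leibniz rule yields $d(c\cdot x)=c\cdot dx$, so by induction
\[
c^{k+1}\cdot\tilde{a}-c^{k}\cdot\tilde{a}=d(c^{k}\cdot\beta)\qquad\text{for all }k\geq 0,
\]
where $c^{k}\cdot\tilde{a}$ and $c^{k}\cdot\beta$ are defined as iterated left multiplications (so no associativity is needed). Telescoping over $k=0,\dots,K-1$ produces
\[
c^{K}\cdot\tilde{a}-\tilde{a}=d\Bigl(\sum_{k=0}^{K-1}c^{k}\cdot\beta\Bigr).
\]

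Finally, submultiplicativity of $\val$ gives $\val(c^{k}\cdot\tilde{a})\geq k\val(c)+\val(\tilde{a})\to+\infty$, and similarly for $c^{k}\cdot\beta$, so $c^{K}\cdot\tilde{a}\to 0$ and the series $\sum_{k=0}^{\infty}c^{k}\cdot\beta$ converges in the complete normed space $A$. Letting $K\to\infty$ and invoking continuity of $d$ gives $\tilde{a}=-d\bigl(\sum_{k=0}^{\infty}c^{k}\cdot\beta\bigr)$, so $a=[\tilde{a}]=0$ in $H(A)$. The only genuinely new ingredient compared with Lemma \ref{l: algebaric lemma} is the rescaling by $\lambda^{-1}$; the main point to check is the degree bookkeeping that lets one choose a closed, degree-zero representative $\tilde{b}$ with the desired valuation bound.
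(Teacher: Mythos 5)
Your argument is correct and is exactly the paper's intended proof: the paper establishes this lemma by noting that the telescoping/geometric-series argument of Lemma \ref{l: algebaric lemma} goes through verbatim after rescaling by $\lambda^{-1}$, which is precisely what you do (iterate left multiplication by $c=\lambda^{-1}\tilde{b}$, telescope, and use $\val(c)>0$ plus completeness to sum the primitives). One small remark: the preliminary reduction ``WLOG $a$ homogeneous, handling components separately'' is unnecessary and, as stated, does not quite decouple when $b$ is inhomogeneous; it is cleaner to leave $\tilde{a}$ and $\beta$ arbitrary and only arrange a closed homogeneous representative $c$ of positive valuation (even degree, or with the sign $d(c\cdot x)=(-1)^{|c|}c\cdot dx$ carried along), which is all the iteration actually uses.
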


To recover the previous lemma, set $a=b$ and $\lambda=1$. The reader is invited to compare our discussion with \cite[Section 1.6.2]{BSV}.

%\begin{remark}
%	The above lemma does not use the associativity of the algebra $A$. This is important when we apply it to the relative symplectic cohomology, since the associativity of a chain level product does not generally hold.
%\end{remark}

\subsection{Heaviness}
Now we use this algebraic lemma to prove one of our main theorems. Let $a\in QH(M; \Lambda)$ be a non-zero idempotent throughout this section.

\begin{theorem}\label{t: heavy1}
For a compact subset $K$ of $M$, let $H:M\to \mathbb{R}$ be a smooth function which is zero on $K$.  If $c(a; H)>0$ then $PSS_{K}(a)=0\in SH_{M}(K; \Lambda)$.
\end{theorem}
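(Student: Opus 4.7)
The plan is to produce a chain-level representative of $\iota_K(PSS_K(a))$ in the product model $\mathcal{C}_K \otimes_{\Lambda_{\geq 0}} \Lambda$ with strictly positive valuation, and then to invoke Lemma \ref{l: algebaric lemma} to force this idempotent to vanish. First, although $H$ is only zero on $K$, the Hamiltonian shift property (Theorem \ref{t:property}(3)) allows me to replace $H$ by $H_\epsilon := H - \epsilon$ for any $0 < \epsilon < c(a; H)$, which satisfies $H_\epsilon|_K = -\epsilon < 0$ and $c(a; H_\epsilon) = c(a; H) - \epsilon > 0$. Then, by the Lipschitz property (Theorem \ref{t:property}(4)), a sufficiently $C^0$-small perturbation of $H_\epsilon$ yields a nondegenerate Hamiltonian $H'$ that is still negative on a neighborhood of $K$ and still satisfies $c(a; H') > 0$.

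Second, since $H'$ is nondegenerate and $a$ is a degree $0$ idempotent, the definition of the spectral invariant together with Usher's best-representative theorem produce a closed element $x \in CF^0(H'; \Lambda)$ with $[x] = PSS^{H'}_\Lambda(a)$ and $\val(x) = c(a; H') > 0$. Theorem \ref{thm-AGV} supplies a $\Lambda_{\geq 0}$-linear chain map $\Phi: CF(H'; \Lambda_{\geq 0}) \to \mathcal{C}_K$, and its base-change $\Phi_\Lambda: CF(H'; \Lambda) \to \mathcal{C}_K \otimes_{\Lambda_{\geq 0}} \Lambda$ is valuation non-decreasing, since the valuations on both sides are induced from the $\Lambda_{\geq 0}$-integral structures via \eqref{eq-norm}. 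Hence $\tilde{x} := \Phi_\Lambda(x)$ is a degree $0$ cocycle with $\val(\tilde{x}) \geq c(a; H') > 0$, and the commuting triangle of Theorem \ref{thm-AGV}(2) identifies its cohomology class with $\iota_K(PSS_K(a))$.

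Finally, $\iota_K(PSS_K(a))$ is an idempotent of $H^*(\mathcal{C}_K \otimes \Lambda)$, because after base change $\iota_K$ is an isomorphism of $\Lambda$-algebras and $a$ is idempotent. The triple $(\mathcal{C}_K \otimes \Lambda, d, \val)$ satisfies the hypotheses of Lemma \ref{l: algebaric lemma}: it is a complete valued $\Lambda$-chain complex with a product structure compatible with the valuation. Applying that lemma to the degree $0$ representative $\tilde{x}$ of strictly positive valuation forces $\iota_K(PSS_K(a)) = 0$, and since $\iota_K$ is an isomorphism we conclude $PSS_K(a) = 0$.

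The main obstacle is bridging the quantitative Hamiltonian Floer data encoded in the single spectral invariant $c(a; H)$ with a chain-level statement inside the relative symplectic cohomology model $\mathcal{C}_K$ while preserving the valuation filtration. This is precisely what the $\Lambda_{\geq 0}$-linear chain map $\Phi$ from Theorem \ref{thm-AGV} accomplishes; without the valuation-non-decreasing property of $\Phi_\Lambda$ the entire strategy would collapse, since the algebraic lemma is powered by the existence of a positive-valuation representative of the idempotent.
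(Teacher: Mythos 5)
Your proposal is correct and follows essentially the same route as the paper: reduce to a nondegenerate Hamiltonian $H'$ negative on $K$ with $c(a;H')>0$, push a positive-valuation closed representative of $PSS^{H'}_\Lambda(a)$ through the chain map of Theorem \ref{thm-AGV} into $\mathcal{C}_K\otimes_{\Lambda_{\geq 0}}\Lambda$, and kill the resulting idempotent via Lemma \ref{l: algebaric lemma}. Your explicit perturbation step ($H-\epsilon$ plus a $C^0$-small nondegenerate perturbation) simply spells out what the paper asserts as easy, so there is no substantive difference.
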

\begin{proof}
We can easily find a non-degenerate smooth function $H':M\times S^1\to \mathbb{R}$ that is negative on $K$ with $c(a; H')>0$. We consider the map $g: CF(H'; \Lambda)\to \mathcal{C}_K\otimes_{\Lambda_{\geq 0}}\Lambda$ as in Theorem \ref{thm-AGV}, which gives us the following commutative diagram.

$$
\begin{tikzcd}[column sep= large]
	QH(M; \Lambda) \arrow{r}{PSS_{K}} \arrow{d}{PSS^{H'}_{\Lambda}} & SH_M^*(K;\Lambda) \arrow{d}{\iota_K} \\
	HF(H'; \Lambda) \arrow{r}{[g]} \arrow{ur}{\kappa} & H^*(\mathcal{C}_K\otimes_{\Lambda_{\geq 0}}\Lambda)
\end{tikzcd}
$$

Let $A$ be a representative of $PSS^{H'}_\Lambda(a)$ in $CF(H'; \Lambda)$ with positive valution, which exists because $c(a; H')>0$. By Theorem \ref{thm-AGV} it suffices to show that $[g(A)]=0.$ Since $g$ respects the product structures at the homology level, $  [g(A)]$ is an idempotent. For the natural valuation on $\mathcal{C}_K\otimes_{\Lambda_{\geq 0}}\Lambda,$ we have $val(g(A))\geq val(A)>0$. We now use Lemma \ref{l: algebaric lemma} to finish.
\end{proof}

 In particular, if $c(1; H)>0$ then $SH_{M}(K; \Lambda)=0$. A direct corollary is that an SH-visible compact set is heavy.

\begin{corollary}\label{co: heavy1}
For a compact subset $K$ of $M$, if $K$ is SH-$a$-visible then $K$ is $a$-heavy.
\end{corollary}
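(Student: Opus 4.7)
The plan is to deduce Corollary \ref{co: heavy1} as an immediate consequence of Theorem \ref{t: heavy1}, using the heaviness criterion in Lemma \ref{l: criteria}(1). Suppose $K$ is SH-$a$-visible, meaning $PSS_K(a)\neq 0$ in $SH_M(K;\Lambda)$. By Lemma \ref{l: criteria}(1), to conclude that $K$ is $a$-heavy it suffices to show that $\mu(a;H)=0$ for every smooth non-negative $H:M\to\mathbb{R}$ vanishing on $K$, and this is what I would verify.

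Fix such an $H$. For every integer $n\geq 1$, the function $nH$ is smooth, non-negative, and still vanishes on $K$, so Theorem \ref{t: heavy1} applies to $nH$. Since $PSS_K(a)\neq 0$ by hypothesis, the contrapositive forces $c(a;nH)\leq 0$ for all $n$. Dividing by $n$ and passing to the limit in the definition of the homogenized spectral invariant yields $\mu(a;H)\leq 0$.

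For the matching lower bound, I would use the Lipschitz property from Theorem \ref{t:property}(4): since $H\geq 0$, we have
$$c(a;nH)\geq c(a;0)+\int_{S^{1}}\min_M(nH)\geq c(a;0),$$
and $c(a;0)$ is a finite real number by Theorem \ref{t:property}(1) (interpreting this through the $C^{0}$-continuous extension, since the constant Hamiltonian $0$ is degenerate). Dividing by $n$ and letting $n\to\infty$ gives $\mu(a;H)\geq 0$. Combining the two bounds, $\mu(a;H)=0$, which completes the verification.

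I do not anticipate a substantive obstacle: the entire argument is a formal combination of Theorem \ref{t: heavy1} with the standard properties of spectral invariants collected in Theorem \ref{t:property} and the criterion of Lemma \ref{l: criteria}. All the hard input — in particular, the chain-level product structure from \cite{AGV} and the algebraic Lemma \ref{l: algebaric lemma} — has already been absorbed into Theorem \ref{t: heavy1}, so the role of the corollary is merely to translate the vanishing statement about $c(a;H)$ into the language of (homogenized) heaviness.
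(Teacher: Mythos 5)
Your proposal is correct and follows essentially the same route as the paper: the paper also deduces the corollary by applying Theorem \ref{t: heavy1} to the multiples $nH$ of a non-negative function vanishing on $K$ (phrased as a contradiction with $\mu(a;H)>0$ via Lemma \ref{l: criteria}). The only cosmetic difference is that you argue directly and make explicit the lower bound $\mu(a;H)\geq 0$ via the Lipschitz property, a point the paper leaves implicit in the criterion.
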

\begin{proof}
	Suppose that $K$ is not $a$-heavy, then there exists a non-negative function $H$ which is zero on $K$, with $\mu(a; H)> 0$ by Lemma \ref{l: criteria}. However, Theorem \ref{t: heavy1} implies that $c(a; nH)\leq 0$ (in fact this is an equality) for all $n\in \mathbb{Z}$, which is a contradiction.
\end{proof}

%By Lemma \ref{l:opennbhd}, we get a slightly stronger criterion for heaviness.
%
%\begin{corollary}\label{co: heavy2}
%For a compact subset $K$ of $M$, if $K$ is nearly SH-visible, then $K$ is heavy.
%\end{corollary}
%\begin{proof}
%	For any compact domain $\bar{U}$ containing $K$ in its interior, the above corollary says that $\bar{U}$ is heavy. Then the approximation Lemma \ref{l:opennbhd} shows that $K$ is heavy.
%\end{proof}

Next we prove that heavy implies SH-visible.

\begin{theorem}\label{t: heavy2}
	Let $K$ be a compact subset of $M$. If $K$ is $a$-heavy, then $PSS_K(a)\neq 0$.
\end{theorem}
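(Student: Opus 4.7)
The plan is to argue by contradiction using the telescope model for $SH_M(K;\Lambda)$. Suppose $PSS_K(a)=0$, fix an acceleration datum $H_1\leq H_2\leq\cdots$ for $K$, and choose a Floer cocycle $y_1\in CF(H_1;\Lambda)$ representing $PSS^{H_1}_\Lambda(a)$. Placed in the level-$1$ unshifted copy, $\alpha:=(y_1,0,0,\ldots)\in\widehat{tel}(\cC)$ is a $\delta$-cocycle representing $PSS_K(a)$, so the hypothesis produces $\beta\in\widehat{tel}(\cC)$ with $\delta\beta=\alpha$. Writing $\beta=\sum_n(x_n+x'_n)$ with $x_n\in CF(H_n;\Lambda)$ and $x'_n\in CF(H_n;\Lambda)[1]$, the convergence condition for the completion forces both $\val(x_n)\to+\infty$ and $\val(x'_n)\to+\infty$ as $n\to\infty$.

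Reading off the identity $\delta\beta=\alpha$ in each summand of the telescope via \eqref{eq:delta} yields, up to signs, the relations $d_n x'_n=0$ for every $n$, $d_1 x_1\pm x'_1=y_1$, and $d_n x_n\pm x'_n\pm h_{n-1}(x'_{n-1})=0$ for $n\geq 2$. The first family shows that each $x'_n$ is a $d_n$-cocycle. Passing to $HF(H_n;\Lambda)$, the second gives $[x'_1]=\pm PSS^{H_1}_\Lambda(a)$, and the third, combined with the fact that the continuation maps intertwine $PSS$, yields inductively $[x'_n]=\pm PSS^{H_n}_\Lambda(a)$ for all $n\geq 1$. Since $PSS^{H_n}_\Lambda$ is an isomorphism and $a\neq 0$, each $[x'_n]$ is nonzero, so by the definition of the spectral invariant $\val(x'_n)\leq c_\Lambda(a;H_n)$, which equals $c(a;H_n)$ by Proposition \ref{p:compareSpectral}.

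To finish I invoke heaviness. A standard $C^0$-approximation argument, combining the Lipschitz property in Theorem \ref{t:property} with monotone comparison against a smoothing of the time-independent function $x\mapsto \max_{t\in S^1}H_n(x,t)$, extends Lemma \ref{l:equivheavy} to the time-dependent bound $c(a;H_n)\leq \max_{K\times S^1}H_n$. The monotonicity of the acceleration datum together with Dini's theorem then gives $\max_{K\times S^1}H_n\to 0$, so the sequence $\val(x'_n)$ is bounded above, contradicting $\val(x'_n)\to+\infty$.

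The main obstacle is the inductive identification $[x'_n]=\pm PSS^{H_n}_\Lambda(a)$, which is what pins each valuation $\val(x'_n)$ to the spectral invariant $c(a;H_n)$ and allows the heaviness hypothesis to enter; the rest of the argument is a routine unwinding of the telescopic differential and of the defining properties of the acceleration datum.
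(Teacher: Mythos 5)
Your proposal is correct and follows essentially the same route as the paper: represent $PSS_K(a)$ by $(y_1,0,0,\ldots)$ in the completed telescope, unwind a hypothetical primitive to see that the shifted components are chain-level representatives of $PSS^{H_n}_\Lambda(a)$, and use heaviness to bound their valuations by $c(a;H_n)\le 0$, contradicting the convergence requirement of the completion. The only (harmless) cosmetic difference is that you bound $c(a;H_n)$ for an arbitrary acceleration datum via a $C^0$-approximation by an autonomous majorant, whereas the paper instead chooses an acceleration datum dominated by $nG$ for a fixed non-negative $G$ vanishing exactly on $K$ and applies monotonicity directly.
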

\begin{proof}
	Let $G$ be a non-negative function on $M$ that vanishes precisely on $K$. Since $K$ is $a$-heavy, using Lemma \ref{l:equivheavy} and the monotonicity property of spectral invariants, we have that $c(a; nG)=0$.

Note that $G$ is strictly positive on $M-K$, so $nG$ goes to positive infinity on $M-K$ as $n$ goes to infinity. Hence, we can find acceleration data $\lbrace G_{n}\rbrace$ for $K$, such that
	$$
	G_{n}(x,t)< nG(x), \quad \forall n\geq 1, \quad \forall x\in M.
	$$
	By the monotonicity property of spectral invariants, we have that $0>c(a; G_{n})$ for any $n\geq 1$. 
	
	Let $x\in CF(G_{1})$ be an element representing $PSS^{G_1}_{\Lambda}(a)$. It is easy to show that 
	$$
	(x, 0, 0, 0, \cdots)\in \widehat{tel}(CF(G_{n};\Lambda))
	$$
	is a closed element that represents $PSS_K(a)$ in $SH_{M}(K; \Lambda)$. We 
	recall for convenience from \eqref{eq:delta} that the differential of the telescope is defined as
	\begin{align}
	\delta(x_{1}, x'_{1}, x_{2}, x'_{2}, \cdots)= (d_{1}x_{1}+ x'_{1}, -d_{1}x'_{1}, d_{2}x_{2}+ x'_{2}+ h_{1}x'_{1}, -d_{2}x'_{2}, \cdots)  \label{eq:TelDiff}
	\end{align}
	where $d_{n}$ is the Floer differential on $CF(G_{n}; \Lambda)$ and $h_{n}: CF(G_{n}; \Lambda)\to CF(G_{n+1}; \Lambda)$ is the continuation map.
	
	Suppose that
	$$
	\delta(y_{1}, y'_{1}, y_{2}, y'_{2}, \cdots)= (x, 0, 0, 0, \cdots),
	$$
	which is equivalent to
	\begin{align}
	d_{1}y_{1}+ y'_{1}=x, \quad d_{1}y'_{1}=0, \quad d_{2}y_{2}+ y'_{2}+ h_{1}y'_{1}=0, \quad d_{2}y'_{2}=0, \cdots. \label{eq:TelRel}
	\end{align}
	The first equation shows that $y'_{1}$ is homologous to $x_{1}$ in $HF(G_{1}; \Lambda)$, and hence $\val(y'_{1})<0$. The third equation shows that $y'_{2}$ is homologous to $h_{1}y'_{1}$ in $HF(G_{2}; \Lambda)$. By the compatibility of continuation and PSS maps, $h_{1}y'_{1}$ represents $PSS^{G_2}_{\Lambda}(a),$ and hence $\val(y'_{2})<0$. By induction, using the same argument, we get that $\val(y'_{n})<0$ for all $n\geq 1$. Therefore $(y_{1}, y'_{1}, y_{2}, y'_{2}, \cdots)$ is not an element of the completed telescope, which shows that $(x, 0, 0, 0, \cdots)$ is indeed not exact.
\end{proof}

\begin{remark}\label{re: infinity}
	The proof above actually shows that if $(y_{1}, y'_{1}, y_{2}, y'_{2}, \cdots)$ is a convergent primitive of $(x_{1}, 0, 0, 0, \cdots)$, then $\lim_{n\to +\infty}c(a; G_{n})= +\infty$. Because $\val(y_{n}')\to +\infty$ as $n\to +\infty$ and $y'_{n}$ represents $PSS^{G_{n}}_{\Lambda}(a)$. Here $a$ can be any non-zero class in $QH(M; \Lambda)$, not necessarily an idempotent. This fact may be of independent interest, and will be used later.
\end{remark}

Now Corollary \ref{co: heavy1} and Theorem \ref{t: heavy2} give the equivalence between heavy sets and SH-visible sets, in full generality. Next we discuss some immediate consequences.

\begin{lemma}\label{l:opennbhd}
	Let $K \subset M$ be a compact set and consider a sequence of compact sets $K_1\supset K_2\supset ...$ such that $\bigcap K_n=K.$
	If $K_n$ is $a$-heavy (resp. $a$-superheavy) for all $n\geq 1$, then $K$ is $a$-heavy (resp. $a$-superheavy).

\end{lemma}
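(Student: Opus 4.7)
The plan is to apply Definition \ref{def-heavy} directly, reducing everything to a standard compactness/continuity statement about extrema of continuous functions on a decreasing intersection of compact sets. Given a smooth $H \in C^\infty(M)$, the $a$-heaviness of $K_n$ provides the family of inequalities $\mu(a;H) \le \max_{K_n} H$ for all $n\geq 1$, and the $a$-superheaviness of $K_n$ provides $\mu(a;H) \ge \min_{K_n} H$. Thus the whole lemma follows if I can show
\[
\lim_{n\to\infty} \max_{K_n} H = \max_K H \quad \text{and} \quad \lim_{n\to\infty} \min_{K_n} H = \min_K H.
\]

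To prove the first equality, I would argue that $K_{n+1}\subset K_n$ gives $\max_{K_{n+1}} H \le \max_{K_n} H$, so the sequence is monotone decreasing and bounded below by $\max_K H$, hence convergent with limit $L \ge \max_K H$. For the reverse inequality, pick $x_n \in K_n$ with $H(x_n) = \max_{K_n} H$. Since all $x_n$ lie in the compact set $K_1$, some subsequence $x_{n_k}$ converges to a point $x_\infty$. For any fixed $m$, eventually $n_k \ge m$ and thus $x_{n_k}\in K_m$; by closedness, $x_\infty \in K_m$ for every $m$, i.e. $x_\infty \in \bigcap_m K_m = K$. Continuity of $H$ then yields $H(x_\infty) = L$, so $\max_K H \ge L$. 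The proof for minima is identical (with the monotonicity reversed).

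Putting these together: in the heavy case, passing $n\to\infty$ in $\mu(a;H)\le \max_{K_n}H$ gives $\mu(a;H)\le \max_K H$, and since $H$ was arbitrary, $K$ is $a$-heavy. The superheavy case is symmetric. I expect no serious obstacle here: the argument is purely a soft compactness statement and makes no use of any Floer-theoretic machinery, only the variational characterization of (super)heaviness already built into Definition \ref{def-heavy}.
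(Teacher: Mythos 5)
Your proof is correct and in essence the same as the paper's: both arguments reduce the lemma to the elementary compactness fact that the extrema of $H$ over the decreasing compacts $K_n$ converge to the extrema over $K=\bigcap K_n$ (the paper phrases this through Lemma \ref{l: criteria}, taking $H\ge 0$ vanishing on $K$ so that $\epsilon_n=\max_{K_n}H\to 0$). Applying Definition \ref{def-heavy} directly instead of the criterion is a harmless variation, and your treatment of both the heavy and superheavy cases goes through.
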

\begin{proof}
%	Pick a nested sequence of open sets $U_1 \subset U_2 \subset \dots$ such that $\cup_k U_k =M \setminus K$, $\bar{U_k} \subset U_{k+1}$ and $\overline{M\setminus \bar{U_k}}=M \setminus U_k$.
%	By assumption, $\mu(1,H) \le \max_{M \setminus U_k} H$ (resp. $\ge \min_{M \setminus U_k} H$) for all $k$.
%	Since $H$ is continuous, it implies that $\mu(1,H) \le \max_{K} H$ (resp. $\ge \min_{K} H$) so the result follows. 
%	
We only prove $a$-heaviness. The proof of $a$-superheaviness is similar.

We will use Lemma \ref{l: criteria} again to certify that $K$ is heavy. Let $H$ be a non-negative smooth function on $M$ which vanishes on $K$. Define $\epsilon_{n}$ as the maximum of $H$ on $K_n$. Clearly we have $\epsilon_n\to 0$ as $n\to\infty$. Since $K_{n}$ is $a$-heavy for each $n$, then $\mu(a;H)\leq \max_{K_{n}}H= \epsilon_{n}$. So $\mu(a;H)=0$ and $K$ is $a$-heavy. 
\end{proof}

On the other hand, it is not clear that how the relative symplectic cohomology would behave under approximations. Particularly, whether the two notions SH-visible and nearly SH-visible are equivalent was not known previously. Now we have an affirmative answer by combining Corollary \ref{co: heavy1}, Theorem 
\ref{t: heavy2} and Lemma \ref{l:opennbhd}.

\begin{corollary}\label{co: nearly visible}
	A compact subset $K$ of $M$ is SH-$a$-visible if and only if it is nearly SH-$a$-visible. 
\end{corollary}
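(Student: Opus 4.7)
The plan is to string together three results already available in the excerpt, so the proof is essentially formal once the framework is set up. The key observation is that although comparing relative symplectic cohomology of $K$ with those of its compact neighborhoods is a delicate analytic question (there is no direct continuity statement), the equivalence with (super)heaviness established by Corollary \ref{co: heavy1} and Theorem \ref{t: heavy2} allows us to reroute the argument through the much friendlier world of spectral invariants.

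The easy direction is SH-$a$-visible $\Rightarrow$ nearly SH-$a$-visible: if $K \subset K'$, the restriction map $r:SH_M(K';\Lambda)\to SH_M(K;\Lambda)$ is a unital algebra map compatible with $PSS$, so $r(PSS_{K'}(a))=PSS_K(a)$. Hence $PSS_K(a)\neq 0$ forces $PSS_{K'}(a)\neq 0$ for every compact domain $K'\supset K$, in particular for those containing $K$ in their interior.

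For the nontrivial direction, suppose $K$ is nearly SH-$a$-visible. First I would fix a decreasing sequence of compact domains $K_1 \supset K_2 \supset \cdots$ with $K\subset \mathrm{int}(K_n)$ for every $n$ and $\bigcap_n K_n = K$; such a sequence exists since $M$ is metrizable. By assumption, $PSS_{K_n}(a)\neq 0$ for every $n$, so each $K_n$ is SH-$a$-visible, and Corollary \ref{co: heavy1} then gives that every $K_n$ is $a$-heavy.

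The final step is to descend heaviness from the $K_n$ to their intersection $K$. This is exactly the content of Lemma \ref{l:opennbhd}, which yields that $K$ itself is $a$-heavy. Invoking Theorem \ref{t: heavy2} now converts $a$-heaviness of $K$ back into the desired statement $PSS_K(a)\neq 0$, i.e.\ $K$ is SH-$a$-visible. The main conceptual obstacle is of course packaged inside Theorem \ref{t: heavy2}, whose proof via the completed telescope model produces a nonzero class $PSS_K(a)$ directly from the $a$-heaviness of $K$; once that theorem is granted, the present corollary costs essentially nothing more than the above bookkeeping.
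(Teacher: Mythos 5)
Your proof is correct and takes essentially the same route as the paper: exhibit nested compact domains $K_1\supset K_2\supset\cdots$ with $\bigcap K_n=K$, apply Corollary \ref{co: heavy1} to get heaviness of each $K_n$, pass to $K$ via Lemma \ref{l:opennbhd}, and convert back with Theorem \ref{t: heavy2}. The only cosmetic difference is the construction of the exhausting sequence: the paper takes regular sublevel sets of a non-negative smooth function vanishing exactly on $K$ (using Sard's theorem) so that the $K_n$ are genuinely compact \emph{domains}, which is slightly more than your appeal to metrizability provides.
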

\begin{proof}
Note that for any compact $K$ we can construct compact domains $K_1\supset K_2\supset ...$ such that $\bigcap K_n=K$, for example by finding a non-negative smooth function on $M$ whose vanishing set is precisely $K$ and using Sard's theorem. If $K$ is nearly $SH$-visible, then $K_n$ is $SH$-visible and hence heavy for all $n$. Lemma \ref{l:opennbhd} shows that $K$ is heavy and therefore $SH$-visible.
\end{proof}

Another application is about the notion of SH-heaviness.

\begin{proposition}\label{p:basicheavy}
	If a compact subset $K$ of $M$ is SH-heavy, then it is heavy.
\end{proposition}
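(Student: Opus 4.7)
I will prove the contrapositive: if $K$ is not heavy then $\tau(K)=0$. The key tool is a Mayer--Vietoris decomposition of $M$ near $K$, combined with the chain of equivalences ``not heavy $\Leftrightarrow$ not SH-visible $\Leftrightarrow$ not nearly SH-visible'' supplied by Corollary \ref{co: heavy1}, Theorem \ref{t: heavy2}, and Corollary \ref{co: nearly visible}.

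\textbf{Setup.} Assuming $K$ is not heavy, the cited results produce a compact neighborhood $A$ of $K$, with $K\subset\operatorname{int}(A)$, for which $SH_M(A;\Lambda)=0$. Since the restriction maps are unital ring homomorphisms, for any compact $K''\subset A$ we get $1_{K''}=r_{A,K''}(1_A)=0$, and therefore $SH_M(K'';\Lambda)=0$ as well.

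\textbf{Mayer--Vietoris step.} Choose a smooth $f:M\to[0,\infty)$ with $f^{-1}(0)=K$ and $\epsilon>0$ small enough that $f^{-1}([0,2\epsilon])\subset A$. Put $A_0:=f^{-1}([0,2\epsilon])$ and $B_0:=f^{-1}([\epsilon,\infty))$; they are Poisson commuting because they are the zero sets of smooth non-negative functions depending only on $f$, and one checks that $A_0\cup B_0=M$ and $A_0\cap B_0=f^{-1}([\epsilon,2\epsilon])\subset A$. For $\alpha\in\tau(K)$, applying the definition of $\tau$ to the open neighborhood $U:=f^{-1}([0,\epsilon))$ of $K$, whose complement is $B_0$, yields $r_{B_0}(\alpha)=0$; the Setup step yields $r_{A_0}(\alpha)=0$ since $SH_M(A_0;\Lambda)=0$. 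The Mayer--Vietoris exact sequence
\[\cdots \to SH_M^{*-1}(A_0\cap B_0;\Lambda)\xrightarrow{\delta} SH_M^*(M;\Lambda)\xrightarrow{(r_{A_0},r_{B_0})} SH_M^*(A_0;\Lambda)\oplus SH_M^*(B_0;\Lambda)\to\cdots\]
then represents $\alpha$ as $\delta(\beta)$ with $\beta\in SH_M^{*-1}(A_0\cap B_0;\Lambda)$; but $A_0\cap B_0\subset A$, so by the Setup step this target group vanishes, forcing $\beta=0$ and hence $\alpha=0$. Thus $\tau(K)=0$, so $K$ is not SH-heavy.

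\textbf{Main obstacle.} The central design choice is arranging the Mayer--Vietoris decomposition so that \emph{both} $A_0$ and the overlap $A_0\cap B_0$ lie inside the ``invisible'' neighborhood $A$; the nested radii $\epsilon<2\epsilon$ accomplish exactly this, and this is why it matters that invisibility has already been upgraded to the near-invisibility of any compact subneighborhood. The remaining ingredients --- Poisson commuting via functional dependence on $f$, unital propagation of vanishing $SH_M$, and the equivalence of heavy with near SH-visibility --- are direct applications of results already developed in the paper.
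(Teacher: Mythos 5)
Your proof is correct and is essentially the paper's own argument run in the contrapositive: both rest on the equivalence heavy $\Leftrightarrow$ SH-visible $\Leftrightarrow$ nearly SH-visible (Corollary \ref{co: heavy1}, Theorem \ref{t: heavy2}, Corollary \ref{co: nearly visible}), unitality of restrictions, and a Mayer--Vietoris cover of $M$ by a small compact neighborhood of $K$ with vanishing $SH_M$ and a compact set avoiding $K$, which together force $\tau(K)=0$. Your sublevel-set decomposition $A_0=f^{-1}([0,2\epsilon])$, $B_0=f^{-1}([\epsilon,\infty))$ (Poisson commuting as functions of $f$) is just a concrete instance of the paper's pair $(\bar{U},\bar{V})$ with disjoint boundaries, so the difference is organizational rather than substantive.
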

\begin{proof}
	For a compact set $K$ and an open set $U$ containing $K$, we can find another open set $V$ such that $M-U\subset V$, $\bar{V}\cap K=\emptyset$ and $\partial \bar{V}\cap \partial \bar{U}=\emptyset$. If $SH_{M}(\bar{U}; \Lambda)= 0$, then the Mayer-Vietoris sequence for the pair $(\bar{U}, \bar{V})$ tells us that 
	$$
	\ker(r: SH_{M}(M; \Lambda) \to SH_{M}(\bar{V}; \Lambda))= 0.
	$$
	Hence for an SH-heavy set $K$, any open set $U$ containing $K$ has $SH_{M}(\bar{U}; \Lambda) \neq 0$. We deduce that $K$ is nearly SH-visible.  Then Corollaries \ref{co: nearly visible} and \ref{co: heavy1} imply that $K$ is heavy.
\end{proof}

This answers one direction of Conjecture \ref{con: DGPZ} in full generality. We do not know the answer for the opposite direction.

%commutative diagram
%\begin{equation}\label{eq:Fmap}
%\xymatrix{
%SH_M(M,0) \ar[r] \ar[d]  & SH_M(\bar{U},0)  \ar[d]\\
%SH_M(M,H) \ar[r]  &  SH_M(\bar{U}, \max_{\bar{U}} H)
%}
%\end{equation}
%Let the image of $1$ in $SH_M(\bar{U},0)$, $SH_M(M,H)$ and  $SH_M(\bar{U}, \max_{\bar{U}} H)$ be $f$, $f'$ and $f''$ respectively.
%Earlier, we have shown $c(f)=0$ and the same proof would imply $c(f'')=\max_{\bar{U}} H$.
%On the other hand, we have $c(f') \le c(f'')$ so we have $c(f') \le \max_{\bar{U}} H$. It gives the heaviness of $\bar{U}$.
%By considering all choices of $U$, we have $c(f') \le \max_{K} H$. It gives the heaviness of $K$.

\subsection{Superheaviness}\label{ss-superheavy}
Our first result is a corollary of the fact that a superheavy set intersects any heavy set. 
% throughout this section.\footnote{a missing sentence?}

\begin{corollary}\label{co: superheavy1}
	Let $a\in QH(M; \Lambda)$ be a non-zero idempotent. For a compact subset $K$ of $M$, if $K$ is $a$-superheavy then $K$ is SH-$a$-full.
\end{corollary}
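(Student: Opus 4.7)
The strategy is to reduce SH-$a$-fullness of $K$ to the classical Entov--Polterovich intersection principle: an $a$-superheavy set must intersect every $a$-heavy set. Given this, the corollary is almost immediate when combined with Corollary \ref{co: heavy1}. Let me spell out the two steps.

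First, I would verify the intersection property for a general non-zero idempotent $a$ (the excerpt only stated it for the unit in Theorem \ref{t: EP1}(2), though the paper works uniformly for idempotents by Remark \ref{rem-idem}). Suppose $K$ is $a$-superheavy, $K'$ is $a$-heavy, and $K \cap K' = \emptyset$. Since $K$ and $K'$ are disjoint compact subsets of $M$, I can choose a smooth function $H \colon M \to [0,1]$ that equals $1$ on a neighborhood of $K$ and $0$ on a neighborhood of $K'$. Then $a$-superheaviness of $K$ gives $\mu(a;H) \geq \min_K H = 1$, while $a$-heaviness of $K'$ gives $\mu(a;H) \leq \max_{K'} H = 0$, a contradiction. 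Hence $K \cap K' \neq \emptyset$.

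Second, suppose for contradiction that $K$ is not SH-$a$-full. Then there exists a compact $K' \subset M - K$ which is SH-$a$-visible, i.e. $PSS_{K'}(a) \neq 0$. By Corollary \ref{co: heavy1}, $K'$ is $a$-heavy. Combining the assumption that $K$ is $a$-superheavy with the intersection property just established, we get $K \cap K' \neq \emptyset$, contradicting $K' \subset M - K$. Therefore every compact subset of $M - K$ is SH-$a$-invisible, which by Definition \ref{def: a-visible}(2) means exactly that $K$ is SH-$a$-full.

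There is no serious obstacle here; the only point requiring any thought is making sure the Entov--Polterovich intersection lemma genuinely extends to arbitrary idempotents, but as shown this follows verbatim from Definition \ref{def-heavy} and needs only a smooth separating function between two disjoint compacta in $M$.
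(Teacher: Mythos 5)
Your proof is correct and follows essentially the same route as the paper's: contradiction via Corollary \ref{co: heavy1} together with the fact that $a$-heavy sets must intersect $a$-superheavy sets. The only difference is that you also spell out the standard Entov--Polterovich separating-function argument for the intersection property for a general idempotent $a$, which the paper simply invokes as known; this is a harmless (and correct) addition.
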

\begin{proof}
	Suppose that $K$ is not SH-$a$-full. Then there exists some compact subset $K'$ of $M$ which is disjoint from $K$ and SH-$a$-visible. By Corollary \ref{co: heavy1}, $K'$ is $a$-heavy, which contradicts to that $K$ is $a$-superheavy as $a$-heavy sets intersect $a$-superheavy sets.
\end{proof}

We do not know how to prove the converse in general, but we will discuss a strategy that gives a partial converse. We restrict to $a=1$ for simplicity in the rest of the paper. We start with the key but simple lemma.

%\begin{lemma}
%Assume that we have
%	$$
%	\sup \lbrace c(a;H)\mid \Pi(a,1)\neq 0\rbrace =c([vol]; H)
%	$$
%	for some smooth function $H$ (*). Then, $$c(1; H)=-c([vol]; -H).$$
%\end{lemma}

\begin{lemma}\label{l: cy}
	Let $(M, \omega)$ be a symplectic manifold with $c_1(M,\omega)|_{\pi_2(M)}=0$. We have that
	\begin{align}\label{eq:CYPoincare}
	c([vol]; H)=\sup \lbrace c(a;H)\mid \Pi(a,1)\neq 0\rbrace
	\end{align}
	for any $H \in C^{\infty}(M)$. As a consequence, we have 
$$c(1; H)=-c([vol]; -H).$$
\end{lemma}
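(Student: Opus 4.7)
My strategy is to prove the main identity \eqref{eq:CYPoincare} directly using the Calabi-Yau hypothesis, and then derive the consequence via Lemma \ref{l:Poincare}. The crucial input from $c_1(M,\omega)|_{\pi_2(M)} = 0$ is that $\Lambda_\omega^n = 0$ for $n \neq 0$ (since $2c_1(M)[A] = 0$ for every $A \in \pi_2(M)$), so $QH^*(M;\Lambda)$ inherits a genuine $\mathbb{Z}$-grading from $H^*(M)$ that is preserved by the quantum product, by the PSS map, and by the Floer differential on $CF(H;\Lambda)$. In particular, $QH^{2n}(M;\Lambda) = \Lambda\cdot[vol]$.

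For the main identity, the inequality $c([vol]; H) \leq \sup\{c(a;H) \mid \Pi(a, 1) \neq 0\}$ is immediate, since $\Pi([vol], 1) = \pi\langle[vol], [M]\rangle = 1 \neq 0$. For the reverse I would decompose any admissible $a$ into pure-degree pieces $a = \sum_k a_k$. Because $\langle a_k, [M]\rangle$ vanishes unless $k = 2n$, the hypothesis $\Pi(a,1) \neq 0$ becomes $\pi\langle a_{2n}, [M]\rangle \neq 0$, and the Calabi-Yau constraint forces $a_{2n} = c \cdot [vol]$ with $c \in \Lambda$ satisfying $\pi(c) \neq 0$; the latter implies $\val(c) \leq 0$. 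Since the Floer differential preserves the $\mathbb{Z}$-grading, closed representatives of $PSS_\Lambda(a)$ split into independent pure-degree pieces, so $c(a;H) \leq c(a_{2n}; H)$ (by choosing optimal representatives degree by degree). The valuation shift property then gives $c(a_{2n};H) = c([vol];H) + \val(c) \leq c([vol];H)$, completing the reverse inequality.

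For the consequence, I would apply Lemma \ref{l:Poincare} with $b = 1$ to obtain
$$c(1;H) = -\sup\{c(a;\bar{H}) \mid \Pi(a,1) \neq 0\},$$
and then invoke the main identity with $\bar H$ in place of $H$ to identify the right-hand supremum with $c([vol];\bar H)$, yielding $c(1;H) = -c([vol];\bar H)$. For time-independent $H$, energy conservation along the Hamiltonian flow gives $\bar H(x,t) = -H(\phi_H^t(x)) = -H(x)$, so $\bar H$ coincides with $-H$ as a Hamiltonian, and the consequence $c(1;H) = -c([vol];-H)$ follows.

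I do not expect any serious obstacle. The only nontrivial point is the decomposition $c(a;H) = \min_{k:a_k\neq 0} c(a_k;H)$ for inhomogeneous $a$, which rests on closed chain-level representatives splitting by pure degree; this is exactly what the Calabi-Yau hypothesis buys us by lifting the $\mathbb{Z}/2$-grading on $CF(H;\Lambda)$ to a genuine $\mathbb{Z}$-grading via the unambiguous Conley-Zehnder index.
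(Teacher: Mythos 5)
Your proof is correct and follows essentially the same route as the paper: the easy inequality from $\Pi([vol],1)\neq 0$, the reverse inequality by splitting $a$ into pure-degree pieces (the Calabi--Yau hypothesis making the $\mathbb{Z}$-grading genuine and compatible with the quantum product, PSS map and Floer differential) so that the relevant top-degree piece is $c\cdot[vol]$ with $\pi(c)\neq 0$, and the consequence via Lemma \ref{l:Poincare} with $b=1$ together with $\bar H=-H$ for autonomous $H$. Your handling of the coefficient, deducing only $\val(c)\le 0$ from $\pi(c)\neq 0$ and absorbing it with the valuation-shift property, is if anything slightly more careful than the paper's assertion that this valuation is zero, but it is the same argument.
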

\begin{proof}
	First we have that $\Pi([vol],1)\neq 0$. Hence $\sup \lbrace c(a;H)\mid \Pi(a,1)\neq 0\rbrace \geq c([vol]; H)$.
	
	Next we prove an inequality in the other direction. Pick a class $a$ with $\Pi(a,1)\neq 0$. Since $(M, \omega)$ is symplectic Calabi-Yau on spherical classes, the quantum product respects the natural $\mathbb{Z}$-grading on $QH(M,\Lambda)$. 
%the Novikov variable $T$ has degree zero 
In particular, we need to have
	$$
	a= a_{0}[vol]+ a'
	$$
	where the degree of $a'$ is less than $2n$, and $a_{0}\in \Lambda$ with $\val(a_{0})= 0$. Recall for two classes $x,y$ with different degrees, the spectral invariants satisfy that  
	$$
	c(x+y; H)= \min\lbrace c(x; H), c(y; H)\rbrace
	$$
	for any function $H$. So we get
	$$
	c(a; H)= \min\lbrace c(a_{0}[vol]; H), c(a'; H)\rbrace \leq c(a_{0}[vol]; H)= c([vol]; H).
	$$
	This shows that $\sup \lbrace c(a;H)\mid \Pi(a,1)\neq 0\rbrace \leq c([vol]; H)$.
The last implication follows from Lemma \ref{l:Poincare}.
\end{proof}

\begin{remark}The equation (\ref{eq:CYPoincare}) also holds when $M$ is
	\begin{enumerate}
		
		\item negatively monotone, or
		\item positively monotone and its minimal Chern number is greater than $n$, where $2n$ is the real dimension of $M$.
	\end{enumerate}
\end{remark} 

Recall that in order to prove that $K$ is superheavy, we need to estimate $c(1;nH)$ as $n\to \infty$ for all non-positive $H$ that is zero on $K$. Using the lemma (when we can), we can instead try to estimate $c([vol];nH)$ as $n\to \infty$ for all non-negative $H$ that is zero on $K$. This is the content of Proposition \ref{p:PSS=0}.
Before we explain it, we recall that the boundary depth (cf. \cite[Section 2]{U2}) of a cochain complex $A$ over $\Lambda$ is defined to be
\begin{align}\label{eq:Bdepth}
\beta(A):= \sup_{x \in \im(\delta)} \inf_{y \in A} \left\{\val(x)-\val(y) | dy=x \right\}.
\end{align}

%\begin{remark}
%The boundary depth in \cite[Section 2]{U2} is defined for action-filtered complexes, for example, $CF(H)$.
%The argument in Proposition \ref{p:compareSpectral} shows that for any $x \in CF(H)$ and its image $\tilde{x} \in CF(H) \otimes_{\Lambda_{\omega}} \Lambda=CF(H;\Lambda)$, we have
%\[
%\inf_{y \in CF(H)} \left\{A(x)-A(y) | dy=x \right\}=\inf_{\tilde{y} \in CF(H;\Lambda)} \left\{\val(\tilde{x})-\val(\tilde{y}) | d\tilde{y}=\tilde{x} \right\}.
%\]
%It implies that $\beta(CF(H)) \le \beta(CF(H;\Lambda))$. Conversely, any element $\tilde{x} \in CF(H;\Lambda) \cap \im(\delta)$ can be written as a linear combination $\tilde{x}=\sum_{q \in \mathbb{N}} \tilde{x}_q$ such that $\tilde{x}_q \in T^{a_q} \im(CF(H) \to CF(H;\Lambda))$, $\tilde{x}_q \in \im(\delta)$ and $\val(\tilde{x}_{q+1}) \ge \val(\tilde{x}_{q}) \ge \val(\tilde{x})$ for all $q$.
%It implies that  $\beta(CF(H))= \beta(CF(H;\Lambda))$.
%\end{remark}

\begin{proposition}\label{p:PSS=0}
Assume that for $a\in QH(M; \Lambda)$ and a non-negative $H \in C^{\infty}(M)$ that is zero on $K$, we have \begin{itemize}
\item $c(a;nH)\to \infty$ as $n\to \infty$, and
\item there is a uniform upper bound on the boundary depths of $CF(nH;\Lambda)$ that is independent of $n$.
\end{itemize}
Then, $PSS_K(a)=0.$

\end{proposition}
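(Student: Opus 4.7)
The strategy is to reverse the argument of Theorem \ref{t: heavy2} and Remark \ref{re: infinity}: rather than extract a contradiction from the existence of a convergent primitive of a telescope cycle, the plan is to construct such a primitive explicitly, with the two hypotheses serving as the essential inputs.

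First I would reduce to a cleaner setting by noting that $K \subset L := H^{-1}(0)$, and that the restriction map $SH_M(L;\Lambda) \to SH_M(K;\Lambda)$ intertwines the PSS maps, so it suffices to prove $PSS_L(a) = 0$. As an acceleration datum for $L$ I would take $H_n := nH - \epsilon_n$ with $\epsilon_n \searrow 0$, perturbed $C^2$-slightly to be non-degenerate while preserving monotonicity. The Hamiltonian shift and Lipschitz properties from Theorem \ref{t:property} together with the first hypothesis give $c(a; H_n) \to \infty$; the second hypothesis, together with the stability of boundary depth under small Hamiltonian perturbations, yields a uniform bound $\beta := \sup_n \beta(CF(H_n;\Lambda)) < \infty$.

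Next, let $x \in CF(H_1;\Lambda)$ represent $PSS^{H_1}_\Lambda(a)$, so that $(x, 0, 0, \ldots)$ represents $PSS_L(a)$ in the completed telescope. From \eqref{eq:TelDiff}, a primitive $(y_1, y'_1, y_2, y'_2, \ldots)$ must satisfy $d_1 y_1 + y'_1 = x$ and $d_{n+1} y_{n+1} + y'_{n+1} + h_n y'_n = 0$ with each $y'_n$ a cycle. I would set $y'_n := (-1)^{n-1} z_n$, where $z_n$ is a chain-level cycle representing $PSS^{H_n}_\Lambda(a)$ with valuation at least $c(a; H_n) - 1$, which tends to $\infty$ by the first hypothesis. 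Since continuation maps intertwine PSS, the homology class of $-y'_{n+1} - h_n y'_n$ vanishes at each step, so a primitive $y_{n+1}$ of this exact cycle exists.

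The main obstacle, and the precise place where the boundary depth hypothesis is used, is to ensure that this candidate primitive actually lies in the completed telescope rather than only in the uncompleted one. Applying the uniform bound $\beta$ to the exact cycle $-y'_{n+1} - h_n y'_n$ produces $y_{n+1}$ with
\[
\val(y_{n+1}) \ge \val(-y'_{n+1} - h_n y'_n) - \beta \ge \min\{\val(y'_{n+1}), \val(y'_n)\} - \beta,
\]
using that continuation maps do not decrease valuation. Hence $\val(y_n), \val(y'_n) \to \infty$, the primitive converges in $\widehat{tel}(\cC)$, the cycle $(x, 0, 0, \ldots)$ is a boundary there, and $PSS_L(a) = 0$. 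By the restriction map, $PSS_K(a) = 0$.
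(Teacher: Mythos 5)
Your proof is correct and follows essentially the same route as the paper's: one constructs an explicit primitive of $(x,0,0,\dots)$ in the completed telescope by taking each $y_n'$ to be a high-valuation chain representative of $\pm PSS^{H_n}_{\Lambda}(a)$ (valuation $\approx c(a;H_n)\to\infty$) and using the uniform boundary depth bound to control the valuation loss in choosing $y_n$. Your preliminary reduction to $L=H^{-1}(0)$ (so that $nH-\epsilon_n$ genuinely yields acceleration data, with the conclusion for $K$ recovered via the restriction map) is a small tidying-up of a point the paper leaves implicit, but the core argument is identical.
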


\begin{proof}
This proof shares some similarities with the proof of Theorem \ref{t: heavy2}. Instead of showing that a chain level representative of $PSS_K(a)$ is not exact, we want to find a primitive of it using the conditions on $c(a;nH)$ and the boundary depth.

Let ${H_n}$ be an acceleration data of $K$ such that 
\begin{itemize}
\item $H_{n}(x,t)< nH(x), \quad \forall n\geq 1, \quad \forall x\in M$,
\item $c(a;H_n)\to \infty$ as $n\to \infty$, and
\item there is $B>0$ such that the boundary depth $\beta(H_n):=\beta(CF(H_n;\Lambda)) \le B$ for all $n$. 
\end{itemize}

	Let $x\in CF(H_{1})$ be an element representing $PSS^{G_1}_{\Lambda}(a)$, so 
	$$
	(x, 0, 0, 0, \cdots)\in \widehat{tel}(CF(H_{n};\Lambda))
	$$
	is a closed element that represents $PSS_K(a)$ in $SH_{M}(K; \Lambda)$. The differential of the telescope is given by
\eqref{eq:TelDiff}:
	$$
	\delta(x_{1}, x'_{1}, x_{2}, x'_{2}, \cdots)= (d_{1}x_{1}+ x'_{1}, -d_{1}x'_{1}, d_{2}x_{2}+ x'_{2}+ h_{1}x'_{1}, -d_{2}x'_{2}, \cdots)
	$$
	
	We want to construct an element $(y_{1}, y'_{1}, y_{2}, y'_{2}, \cdots)$ such that
	$$
	\delta(y_{1}, y'_{1}, y_{2}, y'_{2}, \cdots)= (x, 0, 0, 0, \cdots),
	$$
	(see\eqref{eq:TelRel} for the equivalent form).
The $y_i$ and $y_i'$ will be defined inductively.

	The first equation of \eqref{eq:TelRel} requires that $y'_{1}$ is homologous to $x_{1}$ in $HF(H_{1}; \Lambda)$.
In particular, we can take $y_1=0$ and $y'_1=x$.
 The third equation of \eqref{eq:TelRel}  shows that $y'_{2}$ is homologous to $h_{1}y'_{1}$ in $HF(H_{2}; \Lambda)$. By the compatibility of continuation and PSS maps, $h_{1}y'_{1}$ represents $PSS^{H_2}_{\Lambda}(a),$ and hence 
we can choose $y_2'$ to be a chain level representative of $PSS^{H_2}_{\Lambda}(a)$ such that $\val(y_2')=c(a;H_2)$. 
Since $y_2'$ and $h_{1}y'_{1}$ descend to the same class in $HF(H_{2}; \Lambda)$, we can find $y_2$ which solves the equation $d_2y_2+y_2'+h_1y_1'=0$.
Moreover, by the definition of the boundary depth \eqref{eq:Bdepth}, for any $\epsilon>0$, we can find such $y_2$ so that $\val(y_2'+h_1y_1')-\val(y_2)<\beta(H_2)+ \epsilon$.
 By induction, using the same argument, we get that for all $n\geq 1$,
\begin{itemize}
\item $\val(y'_{n})=c(a;H_n)$  and
\item $\val(y_n'+h_{n-1}y_{n-1}')-\val(y_n)<\beta(H_n)+ \epsilon \le B+\epsilon$ .
\end{itemize}
Since $\val(h_{n-1}y_{n-1}') \ge \val(y_{n-1}')$ and $\val(y_n'+h_{n-1}y_{n-1}') \ge \min\{y_n',h_{n-1}y_{n-1}'\}$, the fact that $c(a;H_n)$ goes to infinity as $n$ goes to infinity implies that $\val(y_n)$ goes to infinity as $n$ goes to infinity.
Therefore $(y_{1}, y'_{1}, y_{2}, y'_{2}, \cdots)$ is an element of the completed telescope, which shows that $(x, 0, 0, 0, \cdots)$ is exact.

\end{proof}

\begin{theorem}\label{t:SHsuperheavy}
Let $K$ be a compact subset and $H \in C^{\infty}(M)$ is a non-negative function that is zero on $K$.
Suppose that $nH$ satisfies \eqref{eq:CYPoincare} for all $n \in \mathbb{N}$ and there is a uniform upper bound on the boundary depth of $CF(nH;\Lambda)$.
If $PSS_K([vol])\neq 0$, then $K$ is superheavy.
\end{theorem}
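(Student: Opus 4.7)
The plan is to verify the superheaviness criterion of Lemma \ref{l: criteria}(2) by showing $\mu(1; -H) = 0$ for every non-negative $H \in C^{\infty}(M)$ vanishing on $K$ that satisfies the hypotheses of the theorem. The first step is a Calabi--Yau-style reduction to the volume class: since each $nH$ is autonomous, Lemma \ref{l:Poincare} (with $\overline{-nH}=nH$) gives $c(1; -nH) = -\sup\{c(a; nH) \mid \Pi(a, 1) \ne 0\}$, and by \eqref{eq:CYPoincare} for $nH$ this supremum is exactly $c([vol]; nH)$. Hence $c(1; -nH) = -c([vol]; nH)$ for all $n \in \mathbb{N}$, and dividing by $n$ and passing to the limit gives $\mu(1; -H) = -\mu([vol]; H)$. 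So it suffices to show $\mu([vol]; H) = 0$.

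For the upper bound $\mu([vol]; H) \le 0$, I will invoke the contrapositive of Proposition \ref{p:PSS=0} applied to $a = [vol]$: the hypothesis $PSS_K([vol]) \neq 0$ together with the uniform boundary-depth bound on $CF(nH; \Lambda)$ precludes $c([vol]; nH) \to +\infty$. Since $H \ge 0$, Theorem \ref{t:property}(4) makes $\{c([vol]; nH)\}_{n \ge 1}$ monotone non-decreasing, and a monotone non-decreasing real sequence that does not diverge is bounded above by some constant $C$. Hence $\mu([vol]; H) = \lim_n c([vol]; nH)/n \le 0$. For the matching lower bound, the Lipschitz property applied to the pair $(nH, 0)$ gives $c([vol]; nH) \ge c([vol]; 0) + \min_M(nH) = c([vol]; 0)$, since $nH \ge 0$; dividing by $n$ yields $\mu([vol]; H) \ge 0$. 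Combining both bounds, $\mu([vol]; H) = 0$ and therefore $\mu(1; -H) = 0$, completing the verification of superheaviness via Lemma \ref{l: criteria}.

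The main obstacle is the extraction of the uniform upper bound in the middle step: Proposition \ref{p:PSS=0} yields only the qualitative statement that $c([vol]; nH)$ does not diverge, and promoting this to the quantitative bound $c([vol]; nH) \le C$ depends crucially on the monotonicity provided by the sign assumption $H \ge 0$. The role of the Calabi--Yau hypothesis \eqref{eq:CYPoincare} is then to transfer this control from the volume class back to the unit, which is what the heaviness/superheaviness criteria require.
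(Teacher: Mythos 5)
Your argument is correct and follows essentially the same route as the paper: the contrapositive of Proposition \ref{p:PSS=0} (plus monotonicity from $H\ge 0$) bounds $c([vol];nH)$ above, Lemma \ref{l:Poincare} together with \eqref{eq:CYPoincare} for $nH$ converts this into a bound on $c(1;-nH)$, and Lemma \ref{l: criteria} then yields superheaviness. Your repackaging through $\mu([vol];H)=0$ and the explicit monotonicity/lower-bound steps are just slightly more detailed versions of what the paper leaves implicit.
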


\begin{proof}
By $PSS_K([vol])\neq 0$ and Proposition \ref{p:PSS=0},  we know that $c([vol];nH)$ is uniformly bounded above.
Under condition \eqref{eq:CYPoincare} for all $nH$, we therefore obtain that $c(1;-nH)$ is uniformly bounded below and hence $\mu(1;-H)=0.$ It therefore implies the superheaviness of $K$.
\end{proof}

\begin{remark}
One way to achieve a uniform bound on boundary depth is to impose an index bounded condition when $K$ is a Liouville domain.
See \cite{M2020,TVar,DGPZ,S} for more discussions.
\end{remark}

\begin{remark}
Without the condition \eqref{eq:CYPoincare}, we would need a uniform bound on all $c(a;nH)$ with $\Pi(a,1)\neq 0$. This does not follow from $PSS_K(a)\neq 0$ for all such $a$ and appears more difficult to obtain. On the other hand what we ended up proving is strictly stronger than $\mu(1;-H)=0.$
\end{remark}

There is a convenient way to deal with the assumption on boundary depth by replacing relative symplectic cohomology with reduced symplectic cohomology \cite{Gro}, which is defined as follows:
take $\{H_n\}$ to be an acceleration data of $K$ and form the completed telescope $\widehat{tel}(CF(H_{n};\Lambda))$, then the reduced symplectic cohomology 
$SH_M^{red,*}(K;\Lambda)$
is defined to be the kernel $\ker(\delta:\widehat{tel}^{*}(CF(H_{n};\Lambda)) \to \widehat{tel}^{*+1}(CF(H_{n};\Lambda)))$ modulo the closure of the image $\overline{\im}(\delta:\widehat{tel}^{*-1}(CF(H_{n};\Lambda)) \to \widehat{tel}^{*}(CF(H_{n};\Lambda)))$
where $\overline{\im}$ refers to the closure of $\im$ with respect to \eqref{eq:norm}. 

%Tal is used in $L^2$-cohomology theory \cite{W}.
%This simply means that when taking homology we do not divide by the image of the differential but by its closure. 

The $\mathbb{Z}/2$-graded $\Lambda$-vector space $$SH^{red}_M(K;\Lambda)$$ is well-defined and independent of the choice of acceleration data of $K$, see \cite{Gro}. There is a natural map 
\begin{align}\label{eq:SHSHred}
SH_M(K;\Lambda)\to SH^{red}_M(K;\Lambda),
\end{align}
 which is an isomorphism if and only if $\im(\delta)=\overline{\im}(\delta)$. The kernel is precisely the subspace of elements with infinite valuation.
 
 Let us first give a proof of Proposition \ref{prop- red-vanishing} with all these notions at hand.
 
 \begin{proof}[Proof of Proposition \ref{prop- red-vanishing}]
$SH^{red}_M(K;\Lambda)=0$ implies that the unit in $SH_M(K;\Lambda)$ has infinite valuation. Using the chain level product structure from Theorem \ref{thm-AGV} and our basic algebraic lemma (Lemma \ref{l: algebaric lemma}), we obtain that the unit must be zero. This finishes the proof.
\end{proof}

\begin{remark}\label{r:grading}
It is instructive at this point to remember that  $tel(CF(H_{n};\Lambda))$ is naturally $\mathbb{Z}/2N$-graded with $N$ being the minimal Chern number of $M$. In this case, we can take the $\mathbb{Z}/2N$-graded completion (which might be different than just completing if $N=0$) and both $SH_M(K;\Lambda)$ and $SH^{red}_M(K;\Lambda)$ are still $\mathbb{Z}/2N$-graded. Proposition \ref{prop- red-vanishing} (and its proof) works in this version as well.
% - so does many of our other results thus far but we omit spelling this out.
\end{remark}

The following theorem is the analogue of Theorem \ref{t:SHsuperheavy} with relative symplectic cohomology being replaced by the reduced symplectic cohomology.

\begin{theorem}\label{t:SHred_superheavy}
	Let $(M, \omega)$ be a closed symplectic manifold such that \eqref{eq:CYPoincare} is satisfied. For a compact subset $K$ of $M$, if the image of $PSS([vol]) \in SH_M(K;\Lambda)$ under \eqref{eq:SHSHred} in $SH^{red}_{M}(K; \Lambda)$ is not zero, then $K$ is superheavy.
\end{theorem}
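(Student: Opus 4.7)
The plan is to argue by contradiction, converting the failure of superheaviness into $+\infty$ growth of spectral invariants of $[vol]$ via Lemma \ref{l: cy}, and then leveraging that growth to show that $PSS_K([vol])$ lies in the metric closure of $\im(\delta)$ in a suitable telescope. The overall strategy mirrors the proof of Proposition \ref{p:PSS=0}, but with one key difference: vanishing in the reduced theory $SH^{red}_M(K;\Lambda)$ only requires \emph{approximate} primitives in valuation rather than genuine ones in $\widehat{tel}$. This is precisely what allows me to drop the uniform boundary depth hypothesis needed in Theorem \ref{t:SHsuperheavy}.

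First, suppose for contradiction that $K$ is not superheavy. By Lemma \ref{l: criteria} there is a non-positive smooth function $H$ with $H|_K=0$ and $\mu(1;H)<0$, so $c(1;nH)\to -\infty$ linearly in $n$. Setting $H':=-H\ge 0$, the consequence $c(1;H)=-c([vol];-H)$ of Lemma \ref{l: cy} immediately gives $c([vol];nH')\to +\infty$. I would then enlarge $H'$ to a non-negative smooth $\tilde H\ge H'$ vanishing exactly on $K$, so by monotonicity $c([vol];n\tilde H)\to +\infty$ as well. A shift by a positive sequence $c_n\searrow 0$ together with a small non-degenerate perturbation, using the Hamiltonian shift and Lipschitz properties of Theorem \ref{t:property}, then produces an acceleration datum $\{H_n\}$ for $K$ with $c([vol];H_n)\to +\infty$; the monotonicity $H_{n+1}\ge H_n$ is preserved because $\tilde H\ge 0$ and the $c_n$ are decreasing.

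With this acceleration datum I would mimic the inductive construction in the proof of Proposition \ref{p:PSS=0}. Pick $x\in CF(H_1;\Lambda)$ representing $PSS^{H_1}_\Lambda([vol])$, so that $(x,0,0,\ldots)\in \widehat{tel}$ represents $PSS_K([vol])$. Set $y_1=0$, $y_1'=x$, and inductively choose $y_n'\in CF(H_n;\Lambda)$ to be a cycle representative (with the sign forced by \eqref{eq:TelDiff}) of optimal valuation $\val(y_n')=c([vol];H_n)$, together with any primitive $y_n$ of $-y_n'-h_{n-1}y_{n-1}'$; such a $y_n$ exists by the compatibility of continuation maps with PSS. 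Without a boundary depth bound there is no control on $\val(y_n)$, so the full sequence need not lie in $\widehat{tel}$. Instead, I would truncate and define
\[
z^{(N)}:=(y_1,y_1',y_2,y_2',\ldots,y_N,y_N',0,0,\ldots)\in tel\subset \widehat{tel}.
\]

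The crux is then a direct computation from \eqref{eq:TelDiff}: by design $\delta z^{(N)}$ agrees with $(x,0,0,\ldots)$ in all slots up to position $2N$, while its $(2N{+}1)$-st entry is $h_N y_N'$, and all later entries vanish. Since the continuation map $h_N$ is weighted by non-negative powers of $T$,
\[
\val\bigl((x,0,0,\ldots)-\delta z^{(N)}\bigr)\;\ge\;\val(h_N y_N')\;\ge\;\val(y_N')\;=\;c([vol];H_N)\;\xrightarrow[N\to\infty]{}\;+\infty.
\]
Thus $(x,0,0,\ldots)\in \overline{\im(\delta)}$ with respect to the metric \eqref{eq:norm}, so $PSS_K([vol])$ maps to zero in $SH^{red}_M(K;\Lambda)$, contradicting the hypothesis and proving $K$ is superheavy. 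The only step I expect to require a little care is arranging the acceleration datum $\{H_n\}$ so that non-degeneracy, monotonicity, the acceleration axioms, and $c([vol];H_n)\to +\infty$ all survive the required perturbations simultaneously; the telescope bookkeeping is then essentially forced.
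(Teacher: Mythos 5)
Your proposal is correct and follows essentially the same route as the paper: the truncated approximate primitives $z^{(N)}$ with $\val\bigl((x,0,\ldots)-\delta z^{(N)}\bigr)\ge c([vol];H_N)\to\infty$ are exactly the paper's argument (carried out via Proposition \ref{p:PSS=0} and Theorem \ref{t:SHsuperheavy}) showing the class lands in $\overline{\im}(\delta)$, combined with the same use of \eqref{eq:CYPoincare} and Lemma \ref{l: criteria}. The only differences are cosmetic: you run it as a contradiction rather than directly, and you spell out details the paper leaves implicit (enlarging $H'$ to a function vanishing exactly on $K$ and the explicit construction of the acceleration datum with $c([vol];H_n)\to\infty$).
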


\begin{proof}
The proof is almost exactly the same as the proof of Proposition \ref{p:PSS=0} and Theorem \ref{t:SHsuperheavy}.

First of all, if $c(a;nH)\to \infty$ as $n\to \infty$, then we can try to construct a primitive of a  chain level representative of $PSS_K(a)$ as in Proposition \ref{p:PSS=0}.
Using the notation from Proposition \ref{p:PSS=0}, it is clear that $\delta(y_{1}, y'_{1}, y_{2}, y'_{2}, \cdots y_{k}, y'_{k}, 0,0, \dots)$ is converging to $x$ as $k$ goes to infinity.
As a result, $x \in \overline{\im}(\delta)$
\end{proof}

The following lemma gives a sufficient condition for $SH_M(K;\Lambda) \simeq SH^{red}_M(K;\Lambda)$.

\begin{lemma}\label{l:SH=SHred}
If $\widehat{tel}(CF(H_{n}))$ has finite boundary depth, then $\im(\delta)=\overline{\im}(\delta)$ and hence $SH_M(K;\Lambda) \simeq SH^{red}_M(K;\Lambda)$.
\end{lemma}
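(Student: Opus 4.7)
The plan is a standard telescoping-series argument: turn any limit of exact elements into an actual primitive using finite boundary depth. Fix an acceleration datum $\{H_n\}$ for $K$, write $(A,\delta) := \widehat{tel}(CF(H_n;\Lambda))$, and let $\beta := \beta(A) < \infty$ denote the boundary depth. Suppose $x \in \overline{\im}(\delta)$, so there is a sequence $x_n \in \im(\delta)$ with $\val(x - x_n) \to +\infty$. After passing to a subsequence we may assume $\val(x_{n+1} - x_n) \to +\infty$ as well; and we may also arrange $x_0 = 0$ for convenience, so that $x = \sum_{n\geq 0}(x_{n+1} - x_n)$, with the series converging in $A$ by completeness.

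Next, apply the definition of boundary depth \eqref{eq:Bdepth} to each $x_{n+1} - x_n \in \im(\delta)$: for any fixed small $\epsilon > 0$, we can choose $z_n \in A$ with
$$\delta z_n = x_{n+1} - x_n \quad \text{and} \quad \val(z_n) \geq \val(x_{n+1} - x_n) - \beta - \epsilon.$$
Since $\val(x_{n+1} - x_n) \to +\infty$, also $\val(z_n) \to +\infty$, so the series $y := \sum_{n\geq 0} z_n$ converges in the complete normed space $A$.

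Finally, because $\delta$ is continuous with respect to the valuation topology (indeed contracting, by property (2) of $\val$), we compute
$$\delta y = \sum_{n\geq 0} \delta z_n = \sum_{n \geq 0}(x_{n+1} - x_n) = x,$$
which shows $x \in \im(\delta)$. Hence $\im(\delta) = \overline{\im}(\delta)$, and the quotient defining $SH_M^{red}(K;\Lambda)$ coincides with the ordinary cohomology $SH_M(K;\Lambda)$.

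I do not expect a serious obstacle here; the only subtle point is the translation between the $\beta$-type statement ``primitive with controlled valuation for each individual $\im(\delta)$-element'' and the series construction above, but this is exactly what the definition of boundary depth provides, and completeness of $A$ does the rest.
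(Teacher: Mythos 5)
Your proof is correct and follows essentially the same telescoping argument as the paper: write $x$ as a convergent telescoping sum of elements of $\im(\delta)$, use the finite boundary depth to pick primitives with valuations tending to infinity, and sum them using completeness. The only cosmetic differences are that you allow a $\beta+\epsilon$ slack where the paper uses $2\beta$, and you normalize $x_0=0$ (which in fact makes the bookkeeping slightly cleaner than the paper's version); also note that $\val(x_{n+1}-x_n)\to\infty$ follows automatically from the ultrametric inequality, so no subsequence is needed.
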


\begin{proof}
Let $x \in \overline{\im}(\delta)$ and $x_n \in \im(\delta)$ be a sequence such that $\lim_n x_n=x$ and hence $\lim_n \val(x_{n+1}-x_n)= \infty$.
Let $\beta$ be the boundary depth of $\widehat{tel}(CF(H_{n};\Lambda)))$ and $y_n$ be such that $\delta y_n=x_{n+1}-x_n$ and $\val(y_n) \ge \val(x_{n+1}-x_n)-2\beta$.
Clearly, $y:=\sum_{n=1}^{\infty} y_n$ is a convergent sum such that $\delta y=x$. Therefore, $x \in \im(\delta)$ as well.

\end{proof}

The following lemma explains that the boundary depth of $\widehat{tel}(CF(H_{n};\Lambda)))$ does not depend on the choice of acceleration data of $K$. Because of this, we also call the boundary depth of $\widehat{tel}(CF(H_{n};\Lambda)))$ the boundary depth of $K$. 

\begin{lemma}
The boundary depth $\beta$ of $\widehat{tel}(CF(H_{n};\Lambda)))$ equals to the maximal torsion exponent $\tau$ of $SH_M(K;\Lambda_{\ge 0})$, which is independent of the choices of acceleration data.
\end{lemma}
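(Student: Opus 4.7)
The plan is to prove $\beta=\tau$ via two matching inequalities, directly translating between boundaries in $A:=\widehat{tel}(CF(H_n;\Lambda))$ and torsion classes of $SH_M(K;\Lambda_{\geq 0})=H(A_{\geq 0})$, where $A_{\geq 0}:=\widehat{tel}(CF(H_n;\Lambda_{\geq 0}))$. The essential input is the identification $A\cong A_{\geq 0}\otimes_{\Lambda_{\geq 0}}\Lambda$ as filtered complexes together with the flatness of $\Lambda$ over $\Lambda_{\geq 0}$ (both recorded in the excerpt): under this identification, a cycle $y\in A_{\geq 0}$ becomes a boundary in $A$ precisely when its class in $H(A_{\geq 0})$ is $T$-torsion, and the orders of such torsion classes control the valuation loss needed to manufacture a primitive. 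Once $\beta=\tau$ is proved, the independence of $\beta$ from the acceleration datum is immediate, since $SH_M(K;\Lambda_{\geq 0})$ itself is already known to be an invariant of $K$, hence so are its torsion submodule and torsion exponents.

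For the upper bound $\beta\leq\tau$, take any $x\in\im(\delta)\subset A$. Rescaling $x$ by a power of $T$ (which does not affect the gap $\val(x)-\val(y)$), I may assume $x\in A_{\geq 0}$ and $\val(x)=0$. Then $x$ is a cycle in $A_{\geq 0}$ whose class vanishes in $H(A)$, so $[x]\in H(A_{\geq 0})$ is $T$-torsion of some order $k\leq\tau$. For each $\epsilon>0$, choose $c\in\Lambda_{\geq 0}$ with $\val(c)\leq k+\epsilon$ and $cx=\delta z$ for some $z\in A_{\geq 0}$. Then $y:=c^{-1}z\in A$ satisfies $\delta y=x$ and $\val(y)\geq -\val(c)\geq -(k+\epsilon)\geq -\tau-\epsilon$, whence $\val(x)-\val(y)\leq\tau+\epsilon$. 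Passing to the infimum over primitives and the supremum over $x$, and letting $\epsilon\to 0$, I get $\beta\leq\tau$.

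For the lower bound $\beta\geq\tau$, let $[y]\in H(A_{\geq 0})$ be an arbitrary torsion class of order $s$, represented by a cycle $y\in A_{\geq 0}$, which in particular is a boundary in $A$. For any primitive $y'\in A$ with $\delta y'=y$, the non-Archimedean inequality $\val(y)=\val(\delta y')\geq\val(y')$ forces $v:=\val(y)-\val(y')\geq 0$. Hence $T^v\in\Lambda_{\geq 0}$ and $T^v y'\in A_{\geq 0}$ is a genuine primitive of $T^v y\in A_{\geq 0}$, showing $T^v[y]=0$ in $H(A_{\geq 0})$. By the definition of order this forces $v\geq s$, i.e.\ $\val(y)-\val(y')\geq s$ for every primitive. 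Taking the infimum over $y'$ and the supremum over all torsion classes gives $\beta\geq\tau$, completing the identification $\beta=\tau$.

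The only mildly delicate point I expect to handle is that the infimum defining the order $s$ of a torsion class need not be attained, namely when the annihilator ideal of $[y]$ in the valuation ring $\Lambda_{\geq 0}$ has the form $\Lambda_{>s}$ rather than $\Lambda_{\geq s}$. This is precisely what necessitates the $\epsilon$-slack in the upper bound, which is then absorbed by passing to the supremum. No such issue arises for the lower bound, since the implication $T^v[y]=0\Rightarrow v\geq s$ is valid in both the closed and open cases. There are no additional analytic or geometric difficulties: the whole argument is a bookkeeping exercise in non-Archimedean valuations once the flatness-based dictionary between $A$ and $A_{\geq 0}$ is in place.
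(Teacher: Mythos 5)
Your proof is correct, and its skeleton is the same as the paper's: rescale by powers of $T$ to pass between the completed telescope over $\Lambda$ and the one over $\Lambda_{\ge 0}$, so that the valuation loss of primitives on one side corresponds to torsion exponents on the other, and prove the two inequalities separately; your lower bound $\beta\ge\tau$ is essentially the paper's argument (any primitive $y'$ of a torsion representative $y$ satisfies $\val(y)-\val(y')\ge s$ because $T^{\val(y)-\val(y')}y'$ is an honest primitive over $\Lambda_{\ge 0}$). The genuinely useful difference is in the direction $\beta\le\tau$: your construction, which takes an annihilator $c$ of the class of a normalized boundary with $\val(c)\le k+\epsilon$ and produces the primitive $c^{-1}z$ with valuation loss at most $\tau+\epsilon$, is precisely the translation the paper never writes out. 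Indeed, the paper's first step goes the other way (a primitive over $\Lambda$ yields an annihilator of the normalized class, i.e. $\tau\le\beta$), and its second step, although its conclusion is stated as ``$\beta\le\tau$'', in fact exhibits boundaries all of whose primitives lose at least $\tau-\epsilon$, which again gives $\beta\ge\tau$; so your upper-bound argument supplies the step that is missing (or at least mislabeled) in the paper's written proof. You also make explicit the flatness-based point that a cycle in $A_{\ge 0}$ bounds in $A$ exactly when its class is torsion, which the paper uses implicitly when it treats $\tau$ as a supremum over (normalized) boundaries, and your $\epsilon$-slack for possibly non-attained annihilator ideals, as well as the reduction of the independence statement to the known invariance of $SH_M(K;\Lambda_{\ge 0})$, are handled correctly and agree with the paper's intent.
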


\begin{proof}
Let $x,y \in \widehat{tel}(CF(H_{n};\Lambda)))$ be such that $\delta y=x$.  
Let $x':=T^{-\val(x)}x$ and $y':=T^{-\val(y)}y$. Both of them lie in the image of the inclusion $\widehat{tel}(CF(H_{n};\Lambda_{\ge 0}))) \to \widehat{tel}(CF(H_{n};\Lambda)))$ so we can regard them as elements in $\widehat{tel}(CF(H_{n};\Lambda_{\ge 0})))$.
We clearly have $\delta y'=T^{\val(x)-\val(y)} x'$ so $T^{\val(x)-\val(y)}[x']=0$ in $SH_M(K;\Lambda_{\ge 0})$.
By the definition of $\beta$, for fixed $x$ and $\epsilon>0$, we can find a $y$ such that $\val(x)-\val(y)< \beta +\epsilon$.
It therefore implies that $\tau \le \beta$.

Conversely, by the definition of $\tau$, for any $\epsilon>0$, there is $x' \in \widehat{tel}(CF(H_{n};\Lambda_{\ge 0})))$ such that $T^{\tau-\epsilon}[x'] \neq 0$ but $T^{\tau}[x'] = 0$.
Let $x \in \widehat{tel}(CF(H_{n};\Lambda)))$ be the image of $x'$.
The fact that $T^{\tau-\epsilon}[x'] \neq 0$ implies that there is no $y \in \widehat{tel}(CF(H_{n};\Lambda)))$ such that $\delta y=x$ and $\val(x)-\val(y)\le \tau-\epsilon$.
As a result, it proves that $\beta \le \tau$.
\end{proof}

In the $\mathbb{Z}$-graded context, it is useful to talk about having finite boundary depth in a fixed degree. Note it is possible to have finite boundary depth in all degrees, without having finite boundary depth as defined above.

\begin{corollary}\label{co: superheavy2}
	Let $(M, \omega)$ be a closed symplectic manifold with $c_{1}(TM)|_{\pi_2(M)}=0$. Let $K$ be a compact subset of $M$. Assume that \begin{itemize}
	\item $K$ is SH-full and 
	\item there is a sequence of compact sets $K_1\supset K_2\supset ...$ with finite boundary depth all containing $K$ in their interior such that $\bigcap K_i=K$.\end{itemize} Then $K$ is superheavy. 
\end{corollary}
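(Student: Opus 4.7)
The plan is to verify that each $K_i$ is superheavy and then pass to $K=\bigcap_i K_i$ via Lemma \ref{l:opennbhd}. Since $c_1(TM)|_{\pi_2(M)}=0$, Lemma \ref{l: cy} guarantees that condition \eqref{eq:CYPoincare} holds for all Hamiltonians, so Theorem \ref{t:SHred_superheavy} reduces the superheaviness of $K_i$ to showing that the image of $PSS_{K_i}([vol])$ in $SH^{red}_M(K_i;\Lambda)$ is non-zero. Since $K_i$ has finite boundary depth, Lemma \ref{l:SH=SHred} says the natural map $SH_M(K_i;\Lambda)\to SH^{red}_M(K_i;\Lambda)$ is an isomorphism, so we are reduced to proving $PSS_{K_i}([vol])\neq 0$ in $SH_M(K_i;\Lambda)$.

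For this, I would use SH-fullness of $K$ together with Mayer--Vietoris. Since $K$ is compact and contained in $\mathrm{int}(K_i)$, pick a smooth function $g_i:M\to[0,1]$ whose zero locus is a neighborhood of $K$ inside $\mathrm{int}(K_i)$ and which is identically $1$ on a neighborhood of $M\setminus\mathrm{int}(K_i)$. By Sard's theorem, choose a regular value $c\in(0,1)$ and set $A_i:=g_i^{-1}([0,c])$ and $B_i:=g_i^{-1}([c,1])$. Then $A_i\cup B_i=M$, $A_i\cap B_i=g_i^{-1}(c)$, $K\subset\mathrm{int}(A_i)\subset A_i\subset\mathrm{int}(K_i)$, and both $B_i$ and $A_i\cap B_i$ are compact subsets of $M\setminus K$. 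By SH-fullness, $SH_M(B_i;\Lambda)=SH_M(A_i\cap B_i;\Lambda)=0$. Taking smooth $\phi,\psi:[0,1]\to[0,1]$ with $\phi^{-1}(0)=[0,c]$ and $\psi^{-1}(0)=[c,1]$, the functions $f_1:=\phi\circ g_i$ and $f_2:=\psi\circ g_i$ have vanishing loci $A_i$ and $B_i$ and satisfy $\{f_1,f_2\}=\phi'(g_i)\psi'(g_i)\{g_i,g_i\}=0$, so $A_i$ and $B_i$ Poisson commute. The Mayer--Vietoris long exact sequence then forces the restriction $SH_M(M;\Lambda)\to SH_M(A_i;\Lambda)$ to be an isomorphism. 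Since $PSS_M:QH(M;\Lambda)\to SH_M(M;\Lambda)$ is an isomorphism of unital $\Lambda$-algebras (after base change from the identification $SH_M(M;\Lambda_{\geq 0})\simeq H^*(M;\mathbb{Z})\otimes\Lambda_{>0}$ of Theorem 1.3.1 in \cite{Var}), we get $PSS_{A_i}([vol])\neq 0$. By functoriality of PSS under the further restriction $SH_M(K_i;\Lambda)\to SH_M(A_i;\Lambda)$, this forces $PSS_{K_i}([vol])\neq 0$ in $SH_M(K_i;\Lambda)$.

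The main obstacle is the Mayer--Vietoris step, where one needs a Poisson-commuting covering $M=A_i\cup B_i$ with $SH_M$-vanishing on $B_i$ and on the intersection. The key trick is to not apply Mayer--Vietoris to $K_i$ itself (whose topological boundary may not be a smooth hypersurface), but instead to an intermediate smooth sublevel set $A_i$ sandwiched between $K$ and $K_i$; the loss from passing to a smaller set is then recovered by invoking functoriality of PSS under the restriction from $K_i$ to $A_i$. With $PSS_{K_i}([vol])\neq 0$ in hand, Theorem \ref{t:SHred_superheavy} delivers superheaviness of each $K_i$ and Lemma \ref{l:opennbhd} closes out the argument.
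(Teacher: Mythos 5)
Your proof is correct and takes essentially the same route as the paper: SH-fullness plus the Mayer--Vietoris sequence yields nonvanishing of $PSS_{K_i}([vol])$, then Lemma \ref{l: cy}, Lemma \ref{l:SH=SHred} and Theorem \ref{t:SHred_superheavy} give superheaviness of each $K_i$, and Lemma \ref{l:opennbhd} passes to $K=\bigcap_i K_i$. The only difference is that you make explicit the Poisson-commuting decomposition (the sandwiched sublevel set $A_i$ of a single function, with $B_i$ and $A_i\cap B_i$ killed by SH-fullness, followed by PSS-compatibility with the restriction $SH_M(K_i;\Lambda)\to SH_M(A_i;\Lambda)$) that the paper's one-line Mayer--Vietoris claim leaves implicit.
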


\begin{proof}

Suppose that $K$ is $SH$-full.
Then by the Mayer-Vietoris property, for any compact set $K'$ containing $K$ in its interior, we know that 
$QH(M,\Lambda) \to SH_M(K',\Lambda)$
is an isomorphism.

As an immediate consequence of Lemma \ref{l: cy}, Theorem \ref{t:SHred_superheavy} and Lemma \ref{l:SH=SHred}., we know that $K_n$ is superheavy for all $n\geq 1$.
We conclude that $K$ is also superheavy using Lemma \ref{l:opennbhd}.
\end{proof}

Recall from Remark \ref{r:grading} that we can take $\mathbb{Z}$-graded completion when $c_{1}(TM)=0$ and the resulting $SH_M(K;\Lambda)$ is $\mathbb{Z}$-graded. We can then replace the finite boundary depth assumption by the weaker assumption which says that the boundary depth is finite at all degrees. This makes Corollary \ref{co: superheavy2} much more useful, see Theorem \ref{t:superheavyc1}. The proof of Theorem \ref{t:superheavyc1} is complete now with these remarks.

%To compare Theorem \ref{t:SHsuperheavy} and Corollary \ref{co: superheavy2}, readers should be aware that the finite boundary depth condition on $K$ only depends on $K$, while the uniform upper bound on boundary depths of $CF(nH;\Lambda)$ might depend on $H$.\footnote{Maybe we can elaborate this a bit. One implies the other?}

\bibliographystyle{amsplain}

\begin{thebibliography}{10}
	
	\bibitem {AGV} M.Abouzaid, Y.Groman and U.Varolgunes \textit{Framed $E^{2}$ structures in Floer theory.} arXiv:2210.11027.
	
	\bibitem {AS} M.Abouzaid, P.Seidel \textit{An open string analogue of Viterbo functoriality.} Geom. Topol. 14 (2010), no. 2, 627–718.
	
	\bibitem {BSV} S.Borman, N.Sheridan and U.Varolgunes \textit{Quantum cohomology as a deformation of symplectic cohomology.} Journal of Fixed Point Theory and Applications volume 24, Article number: 48 (2022).
	
	\bibitem {CFHW} K.Cieliebak, A.Floer, H.Hofer and K.Wysocki \textit{Applications of symplectic homology II: Stability of the action spectrum.} Mathematische Zeitschrift, September 1996, Volume 223, Issue 1, pp 27-45.
	
	\bibitem {DGPZ} A.Dickstein, Y.Ganor, L.Polterovich and F.Zapolsky \textit{Symplectic topology and ideal-valued measures.} arXiv:2107.10012.
	
	\bibitem {Entov} M.Entov \textit{Quasi-morphisms and quasi-states in symplectic topology.} arXiv:1404.6408 (2014).
	
	\bibitem {EP03} M.Entov and L.Polterovich \textit{Calabi quasimorphism and quantum homology.} Int. Math. Res. Not. 2003, no. 30, 1635–1676.
	
	\bibitem {EP06} M.Entov and L.Polterovich \textit{Quasi-states and symplectic intersections.} Comment. Math. Helv. 81 (2006), no. 1, 75–99.
	
	\bibitem {EP09} M.Entov and L.Polterovich \textit{Rigid subsets of symplectic manifolds.} Compos. Math. 145 (2009), no. 3, 773–826.
	
	\bibitem {FOOO} K.Fukaya, Y.-G.Oh, H.Ohta, K.Ono \textit{Spectral invariants with bulk, quasi-morphisms and Lagrangian Floer theory.} Mem. Amer. Math. Soc. 260 (2019), no. 1254, x+266 pp.
	
	\bibitem {Gro} Y.Groman \textit{Floer theory and reduced cohomology on open manifolds.} arXiv:1510.04265, To appear in Geometry \& Topology 
	
	\bibitem {HS} H.Hofer and D.Salamon \textit{Floer homology and Novikov rings.} The Floer memorial volume, 483-524, Progr. Math., 133, Birkhäuser, Basel, 1995.

	\bibitem {HLS} V.Humili\`ere, F.Le Roux and S.Seyfaddini \textit{Towards a dynamical interpretation of {H}amiltonian spectral invariants on surfaces} Geom. Topol. 20 (2016), no. 4, 2253--2334.
	
	
	\bibitem {Ish} S.Ishikawa \textit{Spectral invariants of distance functions.} J. Topol. Anal. 8 (2016), no. 4, 655–676.
	
	\bibitem {LZ} R.Leclercq and F.Zapolsky \textit{Spectral invariants for monotone Lagrangians.} J. Topol. Anal. 10 (2018), no. 3, 627–700.
	
	\bibitem {M2020} M.McLean \textit{Birational Calabi-Yau manifolds have the same small quantum products.} Ann. of Math. (2) 191 (2020), no. 2, 439-579.
	
	\bibitem {Mor} K.Morimichi \textit{Superheavy Lagrangian immersions in surfaces.} J. Symplectic Geom. 17 (2019), no. 1, 239–249.
	
	\bibitem {Oh} Y.-G.Oh \textit{Construction of spectral invariants of Hamiltonian paths on closed symplectic manifolds.} The breadth of symplectic and Poisson geometry, 525–570, Progr. Math., 232, Birkhäuser Boston, Boston, MA, 2005.
	
	\bibitem {Pardon} J.Pardon \textit{Contact homology and virtual fundamental cycles.} J. Amer. Math. Soc. 32 (2019), no. 3, 825–919.
	
\bibitem {Pol} L.Polterovich \textit{Symplectic rigidity and quantum mechanics} Eur. Math. Soc., 155-179, European Congress of Mathematics, Zürich, 2018.

	\bibitem {PSS} S.Piunikhin, D.Salamon and M.Schwarz \textit{Symplectic Floer-Donaldson theory and quantum cohomology.} Contact and symplectic geometry (Cambridge, 1994), 171–200, Publ. Newton Inst., 8, Cambridge Univ. Press, Cambridge, 1996.
	
\bibitem {S} Y.Sun \textit{Heavy sets and index bounded relative symplectic cohomology.} arXiv:2206.00066.


\bibitem {T} S.Tanny \textit{A max inequality for spectral invariants of disjointly supported Hamiltonians.} arXiv:2102.07487.

	\bibitem {TVar} D.Tonkonog and U.Varolgunes \textit{Super-rigidity of certain skeleta using relative symplectic cohomology.} Journal of Topology and Analysis, to appear.
	
	\bibitem {U} M.Usher \textit{Spectral numbers in Floer theories.} Compos. Math. 144 (2008), no. 6, 1581–1592.
	
\bibitem {U2} M.Usher \textit{Boundary depth in Hamiltonian Floer theory and its applications to Hamiltonian dynamics and coisotropic submanifolds.} Israel J. Math. 184 (2011), 1-57.
	
	\bibitem {Var} U.Varolgunes \textit{Mayer-Vietoris property for relative symplectic cohomology.} Thesis version on author's website.
	
	\bibitem {Var2021} U.Varolgunes \textit{Mayer–Vietoris property for relative symplectic cohomology.} Geometry and Topology 25 (2021) 547-642.
	
	\bibitem {Vit} C.Viterbo \textit{Functors and Computations in Floer Homology with Applications, I.} Geom. Funct. Anal. 9 (1999), no. 5, 985-1033.
	
\bibitem {W} L.Wolfgang \textit{L2-{I}nvariants: {T}heory and {A}pplications to {G}eometry and {K}-{T}heory.} Berlin, Heidelberg: Springer Berlin Heidelberg, 2002.

\end{thebibliography}

{\small

\medskip
\noindent Cheuk Yu Mak\\
\noindent School of Mathematical Sciences, University of Southampton, Southampton, SO17 1BJ, UK\\
{\it e-mail:} C.Y.Mak@soton.ac.uk

\medskip
 \noindent Yuhan Sun\\
\noindent Hill Center, Department of Mathematics, Rutgers University, 110 Frelinghuysen Rd, Piscataway NJ 08854, US\\
 {\it e-mail:} sun.yuhan@rutgers.edu

 \medskip
 \noindent Umut Varolgunes\\
\noindent Department of Mathematics, Bo\u{g}azi\c{c}i University, 34342 Bebek, Istanbul, Turkey\\
{\it e-mail:} varolgunesu@gmail.com

}

\end{document}